\newtheorem{lem}{Lemma}
\newtheorem{lemma}[lem]{Lemma}
\newtheorem{prop}[lem]{Proposition}
\newtheorem{proposition}[lem]{Proposition}
\newtheorem{theorem}[lem]{Theorem}
\newtheorem{conjecture}[lem]{Conjecture}
\newtheorem{corollary}[lem]{Corollary}
\newtheorem{problem}[lem]{Problem}
\theoremstyle{remark}
\newtheorem{remark}[lem]{Remark}
\newtheorem{example}[lem]{Example}
\numberwithin{lem}{section}
\numberwithin{equation}{section}
\def\A{{\mathcal A}}
\def\Z{{\mathbb Z}}
\def\L{{\mathcal L}}
\def\P{{\mathcal P}}
\def\x{{\bf x}}
\def\F{{\bf F}}
\def\FF{{\mathcal F}}
\def\hF{{\hat F}}
\def\hP{{\hat P}}
\def\hQ{{\hat Q}}
\def\hR{{\hat R}}
\def\hS{{\hat S}}
\def\hT{{\hat T}}
\def\Frac{{\rm Frac}}
\def\Y{{\mathfrak Y}}
\def\y{{\mathfrak y}}
\def\X{{\tilde X}}
\def\YY{{\mathcal C}}
\def\Yy{{\mathcal P}}
\def\y{{\mathfrak y}}
\def\I{{\mathcal I}}
\def\s{{\mathcal S}}
\def\C{{\mathfrak C}}
\def\den{\mathrm{den}}
\def\T{{\mathcal T}}
\def\N{{\mathcal N}}
\def\hN{\tilde \N}
\title{Linear Laurent phenomenon algebras}
\author{Thomas Lam}\address
 {Department of Mathematics\\ University of Michigan\\ Ann Arbor\\ MI 48109 USA.}
 \date{\today}
 \email{tfylam@umich.edu}
 \urladdr{http://www.math.lsa.umich.edu/\~{ }tfylam}
 \thanks{T.L. was supported by NSF grant DMS-0901111 and by a Sloan Fellowship.}
  \author{Pavlo Pylyavskyy}\address
{Department of Mathematics\\ University of Minnesota\\ Minneapolis\\ MN 55414 USA.}
 \email{ppylyavs@umn.edu}
 \urladdr{http://sites.google.com/site/pylyavskyy/}
 \thanks{P.P. was supported by NSF grant DMS-0757165.}
\begin{document}
\begin{abstract}
In \cite{LP} we introduced Laurent phenomenon algebras, a generalization of cluster algebras.  Here we give an explicit description of Laurent phenomenon algebras with a linear initial seed arising from a graph.  In particular, any graph associahedron is shown to be the dual cluster complex for some Laurent phenomenon algebra.
\end{abstract}
\maketitle

\section{Introduction}
\label{sec:intro}
Cluster algebras were introduced by Fomin and Zelevinsky in \cite{CA1}. They were quickly recognized to be rather ubiquitous throughout mathematics, appearing for example in 
representation theory of quivers and finite-dimensional algebras, Poisson geometry, Teichm\"uller theory, integrable systems,  and the study of Donaldson-Thomas invariants.

In \cite{LP}, we introduced  {\it Laurent phenomenon algebras} (LP algebras), a generalization of cluster algebras where exchange polynomials  were allowed to have arbitrarily 
many monomials, rather than being just binomials.  The aim of this article is to describe certain LP algebras with a seed where the exchange polynomials are linear.  These LP algebras give a large family of examples of finite type LP algebras  of arbitrarily large rank.


One of the highlights of Fomin and Zelevinsky's theory of cluster algebras is the classification \cite{FZ1} of finite type cluster algebras, which turns out to be identical to the Cartan-Killing classification of semisimple Lie algebras.  The combinatorial structure of finite type cluster algebras is captured by remarkable polytopal-complexes known as {\it generalized associahedra} \cite{CFZ,FZ2}.

The cluster complexes of the LP algebras we study here are not known to be polytopal, but they contain subcomplexes which are polytopal and studied previously: the {\it graph associahedra} and {\it nestohedra} studied in \cite{CD,FS,Pos, Zel}.  In particular, any nestohedron arising from a directed graph is the exchange polytope of some LP algebra.
This explains and confirms the ``striking similarity'' Zelevinsky has observed between nested complexes and cluster complexes \cite{Zel}.

We now describe the results of this paper in more detail.

\subsection{Laurent phenomenon algebras}
Let $R$ be a coefficient ring containing algebraically independent elements $A_1,A_2,\ldots,A_n$.  For example, $R$ could be $\Z[A_1,\ldots,A_n]$.  Let $\FF = \Frac(R[X_1,X_2,\ldots,X_n])$ be the ambient field, where $X_1,\ldots,X_n$ are indeterminates.

A Laurent phenomenon algebra is a subring $\A \subset \FF$ together with a collection of seeds $t = (\x,\F)$, where $\x=\{x_1,\ldots,x_n\} \subset \FF$ is a set of variables, and $\F=\{F_1,\ldots,F_n\} \subset R[x_1,\ldots,x_n]$ is a set of exchange polynomials, satisfying conditions that shall be recalled in Section \ref{sec:background}.  The collection of seeds are connected by mutation.  For each $i \in [n] = \{1,2,\ldots,n\}$, and a seed $t = (\x,\F)$, we have a mutated seed $t' = \mu_i(t) = (\x',\F')$.  The cluster variables of $t$ and $t'$ are the same except for $x_i$ and $x'_i$, which satisfy an exchange relation $x_ix'_i = \hF_i$, where $\hF_i$ is the exchange Laurent polynomial associated to $F_i$.  For the reader comfortable with cluster algebras, the key difference is that $\hF_i$ is a Laurent polynomial rather than a non-Laurent binomial.

The {\it cluster complex} of a LP algebra is the simplicial complex with base set equal to the set of cluster variables, and faces corresponding to collections of cluster variables that lie in the same cluster.  The {\it exchange graph} of a LP algebra $\A$ is the graph with vertex set equal to the set of seeds of $\A$, and edges given by mutations.

\subsection{Nested collections}
\label{ss:nested}
Let $\Gamma$ be a directed unweighted graph on $[n]$.  A non-empty subset $I \subset [n]$ is {\it strongly connected} if the induced subgraph of $\Gamma$ on $I$ is strongly connected; that is, there is a directed path between every pair of vertices in $I$.  We let $\I \subset 2^{[n]}$ denote the collection of strongly-connected subsets. A family of subsets $\s = \{I_1, \ldots, I_k\} \in \I$ is {\it nested} if 
\begin{itemize}
 \item for any pair $I_i, I_j$ either one of them lies inside the other, or they are disjoint;
 \item for any tuple of disjoint $I_j$-s, they are the strongly connected components of their union.
\end{itemize}
See Example \ref{ex:nested} for an example.  The {\it support} $S \subset [n]$ of a nested family $\s$ is given by $S:= \bigcup_k I_k$.  Denote by $\T \subset \I$ the set of strongly connected components of $\Gamma$.  If $\Gamma$ is strongly connected, then $\T = \{[n]\}$.

The nested families with maximal support ($S = [n]$) form a simplicial complex $\N=\N(\Gamma)$, called the {\it nested set complex} by Feichtner and Sturmfels \cite{FS} and the {\it nested complex} by Postnikov \cite{Pos}.  
The base set of $\N$ is the set of $\I \setminus \T$, and a subset of $\I \setminus \T$ is a face if it is nested.  These complexes were studied in a more general context in \cite{FS, Pos}.  To each such simplicial complex $\N(\Gamma)$, there is an associated polytope $P(\Gamma)$, called  a {\it nestohedron}, whose face lattice is dual to the nested complex.  When $\Gamma$ is an undirected graph, the nestohedron is known as a {\it graph associahedron}, studied by Carr and Devadoss \cite{CD}.  This family of polytopes includes the well-known associahedron and cyclohedron.

The {\it extended nested complex} $\hN=\hN(\Gamma)$ is the simplicial complex on $\I$ consisting of all nested families.

\subsection{Acyclic functions}\label{ss:Y}
We define certain generating functions of acyclic subgraphs in $\Gamma$.  For each non-empty subset $I \in 2^{[n]}$ let us consider functions $f\colon I \longrightarrow [n]$ such that $i \to f(i)$ is an edge of $\Gamma$ for each $i$, with the additional possibility that $f(i) = i$.  Call such a function {\it {acyclic}} if the only directed cycles in the resulting graph are loops $i \mapsto f(i)$.

For a strongly-connected component $I \in 2^{[n]}$ define the following Laurent polynomials in $X_1,X_2,\ldots,X_n$:
$$
Y_I = \frac{\sum_{\text{acyclic } f\colon I \longrightarrow [n]} \prod_{i \in I} \X_{f(i)}}{\prod_{i \in I} X_i}
$$ 
where
$$
\X_{f(i)} = 
\begin{cases}
X_{f(i)} & \text{if $i \not = f(i)$}\\
A_{f(i)} & \text{if $i = f(i)$.}
\end{cases}
$$
An example is given in Example \ref{ex:Y}, and further details on the $Y_I$ rational functions are given in Section \ref{sec:Y}.

\subsection{Graph LP algebras}
Given a directed graph $\Gamma$, we define an initial seed $t_\Gamma$ (also denoted $t_\emptyset$ in Section \ref{sec:main}) with cluster variables $\{X_1,\ldots,X_n\}$ and exchange polynomials $F_i = A_i + \sum_{i \to j} X_j$, where $i \to j$ denotes an edge in $\Gamma$.  Let $\A_\Gamma = \A(t_\Gamma)$ denote the LP algebra generated by the initial seed $t_\Gamma$.  Our main theorem is

\begin{theorem}\label{thm:AGamma}
The normalized LP algebra $\A_\Gamma = \A(t_\Gamma)$ 
\begin{itemize}
\item has cluster variables 
$$
\{X_1,X_2,\ldots,X_n\} \cup \{Y_I \mid I \in \I \text{ is strongly connected}\};
$$
\item
has as clusters the sets of $n$ cluster variables of the form
$$
\{X_{i_1},X_{i_2},\ldots,X_{i_k}\} \cup \{Y_S \mid S \in \s\}
$$
where $\s$ is a maximal nested collection for $[n] \setminus \{i_1,i_2,\ldots,i_k\}$;
\item
has cluster complex the extended nested complex of $\Gamma$.
\end{itemize}
\end{theorem}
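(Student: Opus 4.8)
The plan is to prove the three claims of Theorem~\ref{thm:AGamma} together by induction, tracking the combinatorial bookkeeping of nested collections alongside the algebraic mutation of seeds. The key organizing principle is to set up a bijection between the seeds reachable from $t_\Gamma$ and the maximal nested families of $\Gamma$, and to prove by induction that the cluster variables appearing in the seed indexed by a nested family $\s$ are exactly the $X_i$ for $i \notin S$ (the support) together with the $Y_I$ for $I \in \s$. If I can show this, the first bullet follows since every $Y_I$ with $I$ strongly connected appears in some maximal nested family and every $X_i$ appears in the seed indexed by the empty family; the second bullet is the inductive hypothesis itself, read off at maximal nested families; and the third bullet is immediate, since by definition the extended nested complex $\hN(\Gamma)$ has faces the nested families on $\I$, which is exactly the face structure we have matched to the clusters.

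First I would fix the initial seed $t_\Gamma$ and verify it is a genuine LP seed in the sense of Section~\ref{sec:background}, with the linear exchange polynomials $F_i = A_i + \sum_{i\to j} X_j$; here I need to check the defining conditions (coprimality, and that each $F_i$ is not divisible by any variable), which is straightforward because the $F_i$ are linear with nonzero constant term $A_i$. Then I would compute a single mutation $\mu_i(t_\Gamma)$ explicitly, using the exchange relation $x_i x_i' = \hat F_i$, to identify the new cluster variable. The crucial computation is to verify that mutating $X_i$ in the initial seed yields precisely $Y_{\{i\}}$ (or more generally $Y_I$ for the relevant strongly connected $I$), so that the acyclic-function generating polynomials $Y_I$ of Section~\ref{ss:Y} genuinely arise as cluster variables. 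This connects the explicit formula for $Y_I$ to the mutation rule and is where the definition of acyclic functions earns its keep: the numerator of $\hat F_i$ after clearing denominators should match the sum over acyclic $f$.

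The inductive step is the technical heart. Assuming the seed attached to a nested family $\s$ has the asserted cluster variables and that its exchange polynomials are controlled, I would analyze which mutations are admissible and show that each admissible mutation corresponds to either removing a set from $\s$ and inserting a compatible one, or toggling between an $X_i$ and a $Y_I$, in exact correspondence with the edge structure of the nested complex. This requires computing the mutated exchange Laurent polynomials $\hat F_j$ and checking that the mutation of $Y_I$ produces either another $Y_{I'}$ with $I'$ nested-compatible with the rest of $\s$, or recovers an $X_i$; the compatibility conditions from Section~\ref{ss:nested} (pairwise nesting-or-disjointness, and the strong-connected-components condition on disjoint unions) should fall out of the combinatorics of acyclic functions. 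The main obstacle I anticipate is controlling the exchange polynomials $F_j$ at an arbitrary seed: unlike cluster algebras, here the polynomials can grow in complexity, and I must verify that at every seed the exchange relations still have the right shape to produce exactly the $Y_I$'s predicted, with no spurious cluster variables and no collisions.

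To manage this I would likely prove an auxiliary structural result describing the exchange polynomials at a general seed intrinsically in terms of $\Gamma$ and $\s$ — probably an explicit formula or recursion for $F_j$ as a $Y$-type generating function restricted to an appropriate contracted or induced subgraph — so that the mutation computation reduces to a local combinatorial identity among acyclic functions rather than an open-ended algebraic manipulation. With such a formula in hand, confirming the LP axioms are preserved under mutation (coprimality, the correct normalization giving the \emph{normalized} LP algebra, and that $\hat F_j$ is a Laurent polynomial) becomes a finite check at each step, and the global statement follows by connectedness of the exchange graph together with the bijection to $\hN(\Gamma)$. Establishing and maintaining this intrinsic description of the seeds under mutation is, I expect, where essentially all the real work lies.
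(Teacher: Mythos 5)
Your high-level strategy coincides with the paper's: both index the seeds by nested collections, prove by induction that the cluster variables of the seed attached to $\s$ are $\{X_i \mid i \notin S\} \cup \{Y_{S_i} \mid S_i \in \s\}$, and read off the three bullets from this bijection. However, the proposal defers essentially everything to an unnamed ``auxiliary structural result,'' and the specific content of that result is where the proof actually lives; three of its ingredients are genuinely nontrivial ideas that your outline does not anticipate. First, the intrinsic description of the exchange Laurent polynomials is not a $Y$-function of a contracted graph but the path-polynomial formula $\hF_i^\s = \bigl(\sum_j P_S^{i,j}\X_j\bigr)/Y_{S\ominus i}$ with $P_S^{i,j} = \sum_{p\colon i\to_S j} Y_{S-p}$, and the identities that make the mutation computations close up (Lemmas \ref{lem:Yex} and \ref{lem:ikj}) are instances of Dodgson condensation applied to the matrix $\Y$ of Proposition \ref{prop:det}. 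Without identifying the exchange relations as determinantal identities, the ``local combinatorial identity among acyclic functions'' you hope for is not a finite check.

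Second, and more specific to LP algebras, there is a rigidity problem you do not address: mutation determines $F'_j$ only up to a unit and a monomial factor, and the passage from $F_i$ to $\hF_i$ divides by denominators $x_j^{d_j}$ that must be pinned down exactly. The paper needs Proposition \ref{prop:rigid}, which combines the Laurent phenomenon (via Lemma \ref{lem:monden}) with irreducibility and pairwise coprimality of the numerators of the $Y_I$ (Lemma \ref{lem:irr}), to upgrade ``correct up to a monomial'' to ``correct on the nose.'' Skipping this, your induction could not rule out spurious extra factors of $Y_J$ in the denominators of $\hF_i$, which would derail the identification of the mutated variable with $Y_{S\oplus i}$. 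Third, the induction is not a single pass over all mutations: the paper first treats activation sequences (never mutating a vertex twice), shows independence of the activation order via the commutation theorem (Theorem \ref{thm:comm}, which itself requires a separate several-page proof), and only then verifies the internal mutations $\mu_i$ for $i \in S - S^{\max}$ by a case analysis on whether $k^+ = i$ or $k^+ = j$ and on the vanishing of $P^{k,j}_T P^{j,k}_T$. It is this last step that shows the exchange graph closes up into the finite nested complex rather than growing indefinitely, and it is not a formal consequence of the activation-sequence analysis.
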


\begin{example}\label{ex:first}
 Consider a graph $\Gamma$ on $[4]=\{1,2,3,4\}$ with edges $1 \longrightarrow 2$, $2 \longrightarrow 1$, $1 \longrightarrow 3$, $3 \longrightarrow 1$, $3 \longrightarrow 2$, $2 \longrightarrow 3$, $1 \longrightarrow 4$, $3 \longrightarrow 4$, $4 \longrightarrow 2$, as shown in Figure \ref{fig:lin1}.  Then the initial seed is given by 
$$t_\Gamma = \{(X_1, A_1 + X_2 + X_3 + X_4),(X_2, A_2 + X_1 +X_3),(X_3, A_3 + X_1 + X_2 + X_4),(X_4, A_4 + X_2)\}.$$
The graph $\Gamma$ has $11$ strongly-connected subsets: $\emptyset,\{1,4\},\{2,4\},\{3,4\},\{1,3,4\}$ are the subsets which are \emph{not} strongly-connected.  Thus $\A_\Gamma$ has $15$ cluster variables
$$
X_1,X_2,X_3,X_4,Y_1,Y_2,Y_3,Y_4,Y_{12},Y_{23},Y_{13},Y_{123},Y_{124},Y_{234},Y_{1234}
$$
It also has $46$ clusters, including:
$$
\{X_1,X_2,X_3,X_4\}, \{X_1,Y_2,X_3,X_4\},\{X_1,Y_2,X_3,Y_4\},\{Y_{124},Y_2,X_3,Y_4\},\{Y_{124},Y_2,Y_{1234},Y_4\},\ldots
$$
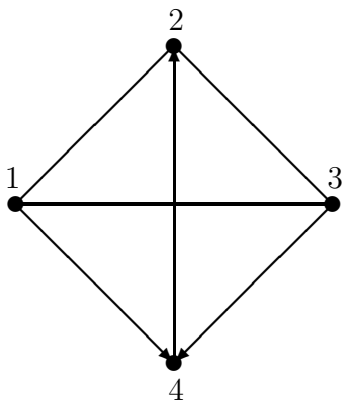
\begin{figure}[ht]
    \begin{center}
\setlength{\unitlength}{2pt}
\begin{picture}(60,60)(0,0)
\thicklines
\put(0,30){\line(1,1){30}}
\put(0,30){\vector(1,-1){30}}
\put(0,30){\line(1,0){60}}
\put(30,60){\line(1,-1){30}}
\put(60,30){\vector(-1,-1){30}}
\put(30,0){\vector(0,1){60}}
\put(0,30){\circle*{3}}
\put(-2,33){$1$}
\put(30,60){\circle*{3}}
\put(29,63){$2$}
\put(60,30){\circle*{3}}
\put(59,33){$3$}
\put(30,0){\circle*{3}}
\put(29,-7){$4$}
\end{picture}
\qquad
    \end{center} 
    \caption{The example graph
}
    \label{fig:lin1}
\end{figure}
\end{example}

\begin{remark}
It follows from the Laurent phenomenon property (Theorem \ref{thm:LP}) that each cluster variable $X_i$ or $Y_I$ is a Laurent polynomial when expressed in terms of the variables of any cluster.
\end{remark}

\subsection{Exchange relations}
The exchange polynomials $F_i^\s$ of the seed $t_\s$ associated to a nested collection $\s$ are described in Section \ref{sec:main}.  
Roughly speaking, the exchange polynomials count certain paths in $\Gamma$, when expressed not in terms of the cluster variables of $t_\s$, but in terms of all the $Y_I$.  
The exchange relation turns out to have an interpretation as a determinantal identity known as {\it Dodgson's condensation}.

\begin{example} 
In the example in Figure \ref{fig:lin1} take the cluster $\{Y_1,Y_{12},Y_{123},X_4\}$. The exchange relations in this cluster are  
\begin{align*}
 Y_1 Y_2 &= 1+Y_{12}, \;\;\;\;\;\;\;Y_{12} Y_{13} = 1+Y_1^2+Y_1(2+Y_{123}),
\end{align*}

\begin{align*}
Y_{123} X_3 &= \frac{X_4(1+Y_1)(1+Y_{12})+A_1(1+Y_1+Y_{12})+A_2 Y_1(1+Y_1)+A_3 Y_1 Y_{12}}{Y_1} \\
&=X_4(Y_{12}+Y_2 +1)+A_1(1+Y_2)+A_2(1+Y_1)+A_3Y_{12},
\end{align*}
and 
\begin{align*}
X_4 Y_{1234} &= \frac{1}{Y_1Y_{12}} (A_1(1+Y_{12}+Y_1^2+Y_1(2+Y_{123})+Y_1Y_{12}) \\
&+A_2(Y_1^3+Y_1^2(2+Y_{123})+Y_1)+A_3(Y_1^2Y_{12}+Y_1Y_{12})+A_4Y_1Y_{12}Y_{123})\\
&=A_1(Y_3+1)+A_2 Y_{13}+A_3(Y_1+1)+A_4Y_{123}
\end{align*}
\end{example}

\subsection{Graph associahedra as cluster complexes of LP algebras}\label{ss:graphassoc}

Let $\A'_\Gamma$ be the LP algebra obtained from $\A_\Gamma$ by freezing (see \cite{LP}) all the variables $\{Y_T \mid T \in \T\}$.  In more detail, consider the subset of seeds of $\A_\Gamma$ which contain all the cluster variables $\{Y_T \mid T \in \T\}$.  Removing all the $Y_T$-s from these seeds, we obtain the LP algebra $\A'_\Gamma$, where now the coefficient ring is $R[Y_T^{\pm 1} \mid T \in \T]$, but the ambient field $\FF$ is the same.  Note that none of the $X_i$ is a cluster variable in $\A'_\Gamma$, and the rank of $\A'_\Gamma$ is equal to $n - |\T|$.  

As a consequence of Theorem \ref{thm:AGamma}, we have

\begin{corollary}
The cluster complex of $\A'_\Gamma$ is the nested set complex of $\Gamma$.  The exchange graph of $\A'_\Gamma$ is the one-skeleton of the (di)graph associahedron $P(\Gamma)$ associated to $\Gamma$.
\end{corollary}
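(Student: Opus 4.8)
The plan is to derive the Corollary directly from Theorem \ref{thm:AGamma} together with the combinatorics of the freezing operation of \cite{LP}. The guiding principle is that freezing a set of cluster variables has, at the level of cluster complexes, the effect of passing to the link of the corresponding face: by construction one keeps only the seeds of $\A_\Gamma$ that contain all of $\{Y_T \mid T \in \T\}$, deletes those variables, and permits mutation only in the remaining directions. I would first record that this collection of seeds is nonempty and in fact large, since the family $\T$ of strongly connected components is itself nested (by the second defining condition its members are the components of their union $[n]$), so it extends to a maximal nested collection of full support and thus sits inside at least one seed.

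Next I would pin down exactly which seeds survive. By Theorem \ref{thm:AGamma} every cluster has the shape $\{X_{i_1},\ldots,X_{i_k}\}\cup\{Y_S \mid S \in \s\}$ with $\s$ a maximal nested collection for $[n]\setminus\{i_1,\ldots,i_k\}$. For such a seed to contain every $Y_T$ ($T\in\T$) we need $T \in \s$, hence $T \subseteq [n]\setminus\{i_1,\ldots,i_k\}$; since the components partition $[n]$, this forces $\{i_1,\ldots,i_k\}=\emptyset$ and no $X_i$ survives, as already observed in Section \ref{ss:graphassoc}. Conversely, in any full-support maximal nested collection $\s$ the inclusion-maximal members are disjoint with union $[n]$, so by the second nestedness condition they are precisely the components $\T$; thus $\T \subseteq \s$ automatically. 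Deleting the frozen variables therefore sends the surviving seeds bijectively to the sets $\{Y_S \mid S \in \s\setminus\T\}$, and $\s\setminus\T$ ranges over the maximal nested families inside $\I\setminus\T$, i.e.\ the facets of $\N$. Running the same argument on subfaces---a face of the link is a nested $\s\subseteq\I\setminus\T$ with $\s\cup\T$ nested, and for $\s\subseteq\I\setminus\T$ one checks that $\s$ is nested if and only if $\s\cup\T$ is, using that every strongly connected set lies in a unique component and that every directed cycle of $\Gamma$ stays within one component---identifies the whole cluster complex of $\A'_\Gamma$ with $\N$. This is the first assertion.

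For the exchange graph I would invoke the defining duality between $\N$ and the nestohedron $P(\Gamma)$: vertices of $P(\Gamma)$ correspond to facets of $\N$ and edges of $P(\Gamma)$ to its ridges (codimension-one faces). By the previous paragraph the facets are exactly the seeds of $\A'_\Gamma$, and two facets meet in a ridge precisely when they differ in a single element $Y_S$, which is exactly the condition that the two seeds be related by one mutation (each nonfrozen variable admits a unique mutation, and $\N$ is the boundary complex of the simplicial polytope dual to the simple polytope $P(\Gamma)$, so every ridge lies in exactly two facets). Hence the vertices and edges of the exchange graph match those of the one-skeleton of $P(\Gamma)$.

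The step I expect to be the real work, rather than formal bookkeeping, is making the link description rigorous: one must check against the freezing construction of \cite{LP} that the seeds and mutations of $\A'_\Gamma$ are genuinely the $\T$-containing seeds of $\A_\Gamma$ with their $\T$-directions suppressed, and then carry out the elementary but fiddly combinatorial lemma that adjoining the components $\T$ preserves both nestedness conditions, so that $\operatorname{link}_{\hN}(\{Y_T \mid T\in\T\})=\N$. Granting these two points, everything else is a direct translation of Theorem \ref{thm:AGamma}.
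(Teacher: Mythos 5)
Your argument is correct and follows exactly the route the paper intends: the corollary is stated there as an immediate consequence of Theorem \ref{thm:AGamma} and the freezing construction, with no further proof given, and your write-up simply supplies the omitted bookkeeping (that the full-support maximal nested collections are precisely those containing $\T$, that adjoining $\T$ to a nested family in $\I\setminus\T$ preserves nestedness, and the standard duality between $\N(\Gamma)$ and the nestohedron $P(\Gamma)$). Nothing in your proposal diverges from what the authors take for granted, so no comparison of approaches is needed.
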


In the case that $\Gamma$ is an undirected path, $P(\Gamma)$ is the usual {\it associahedron}.  As explained in Section \ref{sec:examples}, in this case $\A'_\Gamma$ is in fact identical to the cluster algebra of type $A_n$, which is known to have cluster complex dual to the associahedron.

In the case that $\Gamma$ is a cycle, $P(\Gamma)$ is the {\it cyclohedron}.  However, $\A'_\Gamma$ is not the cluster algebra of type $B_n$, though it does have the same exchange graph.

We conjecture (Conjecture \ref{con:polytope}) that cluster complexes of all linear LP algebras are polytopal.

\subsection{General linear LP algebras}The exchange polynomials $F_i = A_i + \sum_{i \to j} X_j$ of our initial seed are not the most general linear polynomials that we could choose for a seed of a LP algebra.  Let us call a LP algebra $\A$ a {\it linear LP algebra} if it has a seed such that all exchange polynomials are linear in the cluster variables of that seed.   We conjecture (Conjecture \ref{con:finite}) that all linear LP algebras are of finite type.

If the $A_i$ are not algebraically independent, but satisfy relations, or for example are set to 0, then the corresponding linear LP algebra appears to be a degeneration of the graph LP algebra, and in particular has fewer clusters.  

Suppose the coefficients of $X_j$ in $F_i$ of a linear seed of a linear LP algebra $\A$ are arbitrary elements of a coefficient ring $R$, and not just equal to $0$ or $1$, but the constant term is still an indeterminate $A_i$.  One can still associate a graph $\Gamma$ which has an edge $i \to j$ if $X_j$ occurs in $F_i$ with non-zero coefficient.  The cluster complex of $\A$ appears to be the same as that of $\A_\Gamma$ studied here, though the coefficients of the exchange polynomials exhibit interesting dynamics.  This phenomenon is illustrated in the classification of rank 2 LP algebras in \cite{LP}.  The most interesting rank 2 linear case is that associated to the complete undirected graph on 2 vertices, with a pentagonal exchange graph.

\subsection{Positivity and other conjectures}
The LP algebras $\A_\Gamma$ appear to satisfy many positivity and integrality properties beyond the formulae proven in this paper.  We list some conjectures in Section \ref{sec:conjectures}.

\section{Background on Laurent phenomenon algebras}
\label{sec:background}
We recall background on LP algebras from \cite{LP}.

\subsection{Seeds}
\label{sec:seeds}

Let $R$ be a {\it coefficient ring} over $\Z$, which we assume to be a unique factorization domain.  For example $R$ could be $\Z$, a polynomial ring over $\Z$, or a Laurent polynomial ring over $\Z$.  Let $n \geq 1$ be a positive integer and write $[n]$ for $\{1,2,\ldots,n\}$.  Let the {\it ambient field} $\FF$ be the rational function field in $n$ independent variables over the field of fractions $\Frac(R)$. 

A {\it seed} in $\FF$ is a pair $(\x,\F)$ where
\begin{itemize}
\item
$\x = \{x_1,x_2,\ldots,x_n\}$ is a transcendence basis for $\FF$ over $\Frac(R)$.
\item
$\F = \{F_1,F_2,\ldots,F_n\}$ is a collection of polynomials in $\P = R[x_1,x_2,\ldots,x_n]$ satisfying:
\begin{enumerate}
\item[(LP1)]
$F_i$ is an irreducible element of $\P$ and is not divisible by any variable $x_j$
\item[(LP2)]
$F_i$ does not involve the variable $x_i$
\end{enumerate}
\end{itemize}
The variables $\{x_1,x_2,\ldots,x_n\}$ are called {\it cluster variables}, and the polynomials $$\{F_1,F_2,\ldots,F_n\}$$ are called {\it exchange polynomials}.  As is usual in the theory of cluster algebras, the set $\{x_1,x_2,\ldots,x_n\}$ will be called a {\it cluster}.
If $t = (\x,\F)$ is a seed, we let $\L = \L(t)$ denote the Laurent polynomial ring $R[x_1^{\pm 1},x_2^{\pm 1},\ldots,x_n^{\pm 1}]$.  
If $x$ is a cluster variable, we shall use the notation $F_x$ to denote the exchange polynomial associated to a cluster variable $x$.  
We call $n$ the {\it rank} of the seed $(\x,\F)$.  The set $\{x_1,\ldots,x_n\}$ of cluster variables in a seed will be called a {\it cluster}.

For each seed $(\x,\F)$, we define a collection $\{\hF_1,\hF_2,\ldots,\hF_n\} \subset \L$ of Laurent polynomials by the conditions:
\begin{itemize}
\item
$\hF_j = x_1^{a_1}\cdots \widehat{x_j} \cdots x_n^{a_n} F_j$ for some $a_1,\ldots,a_{j-1},a_{j+1},\ldots,a_n \in \Z_{\leq 0}$
\item
\begin{equation}\label{hatF}
\hF_i|_{x_j \leftarrow F_j/x} \in R[x_1^{\pm 1},\ldots,x_{j-1}^{\pm 1},x^{\pm 1},x_{j+1}^{\pm 1},\ldots,x_n^{\pm 1}] \; \mbox{and is not divisible by $F_j$}
\end{equation}
The divisibility is to be checked in $R[x_1^{\pm 1},\ldots,x_{j-1}^{\pm 1},x^{\pm 1},x_{j+1}^{\pm 1},\ldots,x_n^{\pm 1}]$.
\end{itemize}
The $\hF$ are well-defined (\cite[Lemma 2.3]{LP}). Furthermore, the collections $\{F_1,\ldots,F_n\}$ and $\{\hF_1,\ldots,\hF_n\}$ determine each other uniquely (\cite[Lemma 2.3]{LP}).

Suppose $i \in [n]$.  Then we say that a tuple $(\x',\F')$ is obtained from a seed $(\x,\F)$ by mutation at $i$, and write $(\x',\F')  = \mu_i(\x,\F)$, if the former can be obtained from the latter by the following procedure.  

The cluster variables of $\mu_i(\x,\F)$ are given by $x'_i = \hF_i/x_i$ and $x'_j = x_j$ for $j \neq i$.  The exchange polynomials $F'_j \in \L'$ are obtained from $F_j$ as follows.  
If $F_j$ does not depend on $x_i$, then we must have $F'_j/F_j$ be a unit in $R$, where now $F'_j$ is considered as an element of $\L'$.  Otherwise $\hF_i(0)$ is well defined.  We define $G_j$ by
\begin{equation}\label{E:G}
G_j = F_j|_{x_i \leftarrow \frac{\hF_i|_{x_j \leftarrow 0}}{x'_i}}
\end{equation}
Next, we define $H_j$ to be $G_j$ with all common factors (in $R[x_1,\ldots,\hat x_i,\ldots,\hat x_j \ldots,x_n]$) with $\hF_i|_{x_j \leftarrow 0}$ removed.  Note that this defines $H_j$ only up to a unit in $R$.  Finally we have $F'_j = MH_j$ where $M$ is a Laurent monomial in the $x'_1,x'_2,\ldots,\widehat{x'_j},\ldots, x'_n$ with coefficient in $R$, such that $F'_j \in \P'$, satisfies $(LP2)$, and is not divisible by any variable in $\P'$.  For any $H_j$, it is always possible to pick the monomial $M$ to satisfy these conditions, but in general there are many choices for the coefficient of $M$.  In particular $F'_j$ is defined only up to a unit in $R$.

In \cite[Proposition 2.9]{LP} it was shown that if $(\x',\F') = \mu_i(\x,\F)$ is obtained by mutation of $(\x,\F)$ at $i$ then $(\x',\F')$ is also a seed.  
It is clear that if $\x$ is a transcendence basis of $\FF$ over $\Frac(R)$, then so is $\x'$. It was shown in \cite[Proposition 2.10]{LP} that mutations are involutions.
\begin{remark}\label{rem:subs}
By definition, the mutated exchange polynomial $F'_i$ is obtained from $F_i$ by the substitution $x_k \leftarrow \hF_k|_{x_i = 0}/x'_k$.  We will often use the fact that sometimes we can perform this substitution in $\hF_i$ instead of $F_i$, with the answer only being off a monomial factor.  This is the case when $\hF_i/F_i$ does not involve $x_k$.
\end{remark}

\subsection{Laurent phenomenon algebras}
Let $R$ be a fixed coefficient ring and $\FF$ denote the ambient fraction field in $n$ indeterminates as in Section \ref{sec:seeds}.  A {\it Laurent phenomenon algebra} $(\A, \{(\x,\F\})$ is a subring of $\A \subset \FF$ together with a distinguished collection of seeds $\{(\x,\F)\} \subset \FF$ belonging to the ambient field $\FF$.  The algebra $\A \subset \FF$ is generated over $R$ by all the variables $\x$ in any of the seeds of $\A$.  The seeds satisfy the condition: for each seed $(\x,\F)$ and $i \in [n]$, we are given a seed $(\x',\F') = \mu_i(\x,\F)$ obtained from $(\x,\F)$ by mutation at $i$. Thus the seeds form the vertices of a $n$-regular graph, where the edges are mutations.  Furthermore, we assume all seeds are connected by mutation.  

If $t = (\x,\F)$ is any seed in $\FF$, we shall let $\A(t)$ denote any LP algebra which has $t$ as a seed.  We say that $\A(t)$ is generated (as a LP algebra) by $t$, or has {\it initial seed} $t$.  Since seed mutation is only well-defined up to units, the seeds of $\A(t)$ are not determined by $t$.  

We say that two seeds $(\x,\F)$ and $(\x',\F')$ are {\it equivalent} if the following two conditions hold:
\begin{enumerate}
\item
For each $i$ we have $x_i/x'_i$ is a unit in $R$, and
\item
For each $i$ we have $F_i/F'_i$ is a unit in $R$, where $F_i$ and $F'_i$ are both considered as elements of the ambient field $$\FF = \Frac(R[x_1,x_2,\ldots,x_n]) = \Frac(R[x'_1,x'_2,\ldots,x'_n]).$$
\end{enumerate}

Let $\A$ be a Laurent phenomenon algebra.  We will say that $\A$ is {\it normalized} if whenever two seeds $t_1$, $t_2$ are equivalent, we have that $t_1 = t_2$.  Suppose $\A$ is a LP algebra which is normalized.  Then we say that $\A$ is of {\it finite type} if it has finitely many seeds.  All the LP algebras we study in this paper will be normalized and of finite type.

\subsection{Some properties of LP algebras}
\begin{theorem}{\cite[Theorem 5.1]{LP}}\label{thm:LP}
The Laurent phenomenon holds: every cluster variable of a LP algebra $\A$ is a Laurent polynomial in the cluster variables of any seed $t$ of $\A$.
\end{theorem}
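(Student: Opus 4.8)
The plan is to adapt Fomin and Zelevinsky's proof of the Laurent phenomenon for cluster algebras to the LP setting, the essential new feature being that the exchange Laurent polynomials $\hF_i$ may have arbitrarily many monomials rather than being binomials. Fix an arbitrary seed $t_0 = (\x,\F)$ and write $\L = \L(t_0) = R[x_1^{\pm 1},\ldots,x_n^{\pm 1}]$. Since $t_0$ is arbitrary, the theorem reduces to the single assertion that every cluster variable $z$ of $\A$ lies in $\L$. I would prove this by induction on the distance $d$ from $t_0$ to the nearest seed containing $z$ in the exchange graph, formulated uniformly over all choices of base seed. The base cases are immediate: when $d=0$ we have $z \in \x$, and when $d=1$ we have $z = x_i' = \hF_i/x_i$, which lies in $\L$ because $\hF_i \in \L$ by the very definition of the exchange Laurent polynomial.

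For the inductive step, let $t_1 = \mu_i(t_0)$ be the first mutation along a geodesic from $t_0$ to a seed containing $z$. Treating $t_1$ as a base seed, $z$ lies within distance $d-1$, so by the inductive hypothesis $z$ is a Laurent polynomial in the cluster of $t_1$. The clusters of $t_0$ and $t_1$ differ only in the $i$-th variable, related by $x_i' = \hF_i/x_i$ with $\hF_i \in \L$; substituting this relation and clearing denominators expresses $z = P/\hF_i^{N}$ for some $P \in \L$ and $N \ge 0$, the variables $x_j$ with $j \ne i$ contributing no new denominators. Thus $z$ is a Laurent polynomial in $\x$ except possibly for a power of the irreducible $\hF_i$ in the denominator, and it remains to show that $N$ may be taken to be $0$.

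To eliminate the power of $\hF_i$, I would compute $z$ a second time along a route whose initial mutation is $\mu_j$ for some $j \ne i$, obtaining likewise $z = Q/\hF_j^{M}$ with $Q \in \L$. By (LP1) the elements $\hF_i$ and $\hF_j$ agree up to a monomial with distinct irreducible elements of $\P$, hence are coprime in the unique factorization domain $\L$; equating the two expressions and invoking coprimality forces $N = M = 0$, so $z \in \L$ and the induction closes.

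The genuine obstacle lies in making this two-route comparison legitimate. First, one must organize the induction so that a seed containing $z$ is reached from $t_0$ in two ways beginning with distinct mutations, and so that along each route the intermediate seeds and the cluster variable $z$ are literally the same element of $\FF$; in the cluster algebra case this bookkeeping is packaged in the caterpillar lemma, and I would establish its LP analogue rather than rely on naive distance induction. Second, and more delicately, I must control how the exchange polynomials themselves evolve under mutation --- tracing the passage from $F_j$ through $G_j$ in \eqref{E:G}, the removal of common factors to form $H_j$, and the normalizing monomial $M$ --- in order to guarantee that the polynomials governing the intermediate denominators remain irreducible and pairwise non-associate (hence coprime) in the relevant Laurent rings even when they are not binomials. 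The well-definedness of the $\hF_i$ from \cite[Lemma 2.3]{LP}, together with the fact that mutation sends seeds to seeds, are exactly the inputs that make this tracking possible; once coprimality is secured for general exchange polynomials, the unique-factorization argument above completes the proof.
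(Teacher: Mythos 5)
First, note that the present paper does not prove this statement at all: it is imported from \cite[Theorem 5.1]{LP}, where the proof adapts Fomin and Zelevinsky's caterpillar argument. Your proposal correctly identifies that scheme, but as written it contains a genuine gap at its logical core. Your inductive step needs a second representation $z = Q/\hF_j^{M}$ obtained ``along a route whose initial mutation is $\mu_j$ for some $j\neq i$,'' and the inductive hypothesis cannot supply it. The seeds of an LP algebra are a priori organized by the $n$-regular tree of mutation sequences; the geodesic from $t_0$ to the nearest seed containing $z$ has a unique first edge, and a path beginning with $\mu_j$, $j \neq i$, reaches $z$ at distance $d+1$, not $d-1$. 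So induction on distance never produces the two coprime-denominator expressions you want to compare. This is precisely why the caterpillar lemma is not ``bookkeeping'' but the actual engine of the proof: the induction runs along a caterpillar pattern, and the key step is a three-seed gcd computation along $t_0 \xrightarrow{\;i\;} t_1 \xrightarrow{\;j\;} t_2$, where the denominators compared over $\L(t_0)$ are $\hF_i$ and the \emph{pullback} of the exchange Laurent polynomial of the mutated seed $t_1$ at $j$ --- not two exchange polynomials of the same seed. The condition \eqref{hatF}, that $\hF_i|_{x_j \leftarrow F_j/x}$ is a Laurent polynomial not divisible by $F_j$, is engineered into the definition of $\hF$ exactly so that this pullback remains coprime to $\hF_i$ after mutation; establishing these gcd statements for general (non-binomial) exchange polynomials is essentially the entire content of the proof in \cite{LP}, which your last paragraph names as ``the genuine obstacle'' and then defers.

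There is also a local error in the step you do spell out: the coprimality of $\hF_i$ and $\hF_j$ for two indices of the \emph{same} seed can fail. Conditions (LP1)--(LP2) do not prevent $F_i$ and $F_j$ from coinciding up to a unit when neither involves $x_i$ or $x_j$ (for instance $F_1 = F_2 = A + x_3$ in rank $3$ is a legitimate seed); this is exactly why Theorem \ref{thm:comm} of the present paper carries the explicit hypothesis that $F_i/F_j$ is not a unit, and why the correct argument works with coprimality across a mutation rather than within a seed. Finally, a small slip: even granting coprimality, the identity $P\hF_j^{M} = Q\hF_i^{N}$ in the UFD $\L(t_0)$ forces $\hF_i^{N} \mid P$, hence $z \in \L(t_0)$ --- it does not force $N = M = 0$. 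That slip is harmless; the missing second representation and the unproven coprimality control under mutation are not.
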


\begin{lemma} \label{lem:monden}
Let $t = (\x,\F)$ be a seed of a LP algebra $\A(t)$, and $x_i$ one of the cluster variables of $t$.  Suppose that $m = \prod_{j \neq i} x_j^{d_j}$ is a monomial in other cluster variables of $t$ such that $F_i/m$ can be expressed as a {\it {polynomial}} in the cluster variables (not necessarily belonging to $t$) of $\A$. 
Suppose $F_m/\hF_m = \prod_{j\neq i} x_j^{d'_j}$.  Then we have $d'_j \geq d_j$. 
\end{lemma}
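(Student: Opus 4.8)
The plan is to reinterpret the exponents $d'_j$ as $F_j$-adic valuations and then to exploit the Laurent phenomenon in the mutated seed $\mu_j(t)$. Fix an index $j\neq i$; we must show $d'_j\ge d_j$. First I would record the identification $d'_j=\nu_{F_j}\bigl(F_i|_{x_j\leftarrow F_j/x}\bigr)$, where $\nu_{F_j}$ denotes the order along the irreducible element $F_j$ in the UFD $\L=R[x_1^{\pm1},\ldots,x_n^{\pm1}]$. This is immediate from the defining conditions of $\hF_i$: writing $\hF_i=F_i\prod_{l\neq i}x_l^{a_l}$ with $a_l=-d'_l\le 0$, the substitution $x_j\leftarrow F_j/x$ sends the monomial prefactor to $F_j^{a_j}$ times a Laurent monomial coprime to $F_j$; hence the requirement (\ref{hatF}) that $\hF_i|_{x_j\leftarrow F_j/x}$ lie in the Laurent ring and not be divisible by $F_j$ forces $a_j+\nu_{F_j}(F_i|_{x_j\leftarrow F_j/x})=0$. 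So the lemma reduces to the valuation bound $\nu_{F_j}(F_i|_{x_j\leftarrow F_j/x})\ge d_j$.

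Next I would feed in the hypothesis. Since $F_i/m$ is a polynomial in cluster variables of $\A$, and every cluster variable is a Laurent polynomial in the cluster of any seed by Theorem \ref{thm:LP}, the element $F_i/m$ lies in $\L(\mu_j(t))=R[x_1^{\pm1},\ldots,(x'_j)^{\pm1},\ldots,x_n^{\pm1}]$, where $x'_j=\hF_j/x_j$. Expanding $F_i/m=\sum_r g_r(x'_j)^r$ with each $g_r$ a Laurent polynomial in the variables $x_l$ ($l\neq j$), and substituting $x'_j=\hF_j/x_j=F_jw_j/x_j$ (where $w_j$ is the Laurent monomial with $\hF_j=F_jw_j$), I obtain the key identity $F_i=m\sum_r g_r F_j^{\,r}w_j^{\,r}x_j^{-r}$.

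Finally I would substitute $x_j\leftarrow F_j/x$ into this identity and watch the $F_j$-powers collapse. None of $g_r$, $F_j$, $w_j$ involves $x_j$, so each is unchanged, while $x_j^{-r}\mapsto F_j^{-r}x^{r}$ and the factor $x_j^{d_j}$ of $m=x_j^{d_j}\prod_{l\neq i,j}x_l^{d_l}$ becomes $F_j^{d_j}x^{-d_j}$. The powers $F_j^{\,r}$ and $F_j^{-r}$ cancel term by term, leaving $F_i|_{x_j\leftarrow F_j/x}=F_j^{d_j}\cdot B$ with $B=x^{-d_j}\bigl(\prod_{l\neq i,j}x_l^{d_l}\bigr)\sum_r g_r w_j^{\,r}x^{r}$, which manifestly lies in $R[x_1^{\pm1},\ldots,x^{\pm1},\ldots,x_n^{\pm1}]$ and so has nonnegative $F_j$-valuation. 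Hence $\nu_{F_j}(F_i|_{x_j\leftarrow F_j/x})\ge d_j$, which by the first step is precisely $d'_j\ge d_j$. The hard part is this last cancellation: it is the mechanism that upgrades the soft membership $F_i/m\in\L(\mu_j(t))$ coming from the Laurent phenomenon into an honest divisibility by $F_j^{d_j}$, and it works because $\hF_j$ carries exactly the single factor of $F_j$ that the substitution $x_j\leftarrow F_j/x$ is designed to undo.
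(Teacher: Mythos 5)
Your proof is correct and follows essentially the same route as the paper: both use Theorem \ref{thm:LP} to place $F_i/m$ in $\L(\mu_j(t))$ and then compare with the defining condition \eqref{hatF} of $\hF_i$. The paper compresses your explicit computation (identifying $d'_j$ with the $F_j$-adic valuation of $F_i|_{x_j\leftarrow F_j/x}$ and extracting the factor $F_j^{d_j}$ from the substitution) into the single phrase ``comparing with \eqref{hatF}''; your write-up just makes that comparison explicit.
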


\begin{proof}
By Theorem \ref{thm:LP}, being a polynomial in some cluster variables, $F_i/m$ must be a Laurent polynomial in any cluster of this LP algebra.  This implies that $F_i/m \in \L(t) \cap \L(\mu_j(t))$.  Comparing with \eqref{hatF}, we obtain the desired inequality.
\end{proof}

\begin{theorem} \label{thm:comm}
Let $t = (\x,\F)$ be a seed and $i \neq j$ be two indices be such that $x_j$ does not appear in $F_i$ and such that $F_i/F_j$ is not a unit in $R$.  Then the mutations at $i$ and $j$ commute.  More precisely, we can choose seed mutations so that 
$$
\mu_i(\mu_j(t)) = \mu_j(\mu_i(t)).
$$
\end{theorem}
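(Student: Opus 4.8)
The plan is to show that the two seeds $\mu_i(\mu_j(t))$ and $\mu_j(\mu_i(t))$ are \emph{equivalent} in the sense defined above — that their cluster variables and exchange polynomials agree up to units of $R$ — and then to invoke the fact that each individual mutation is well-defined only up to a unit (and a choice of the Laurent monomial $M$ in \eqref{E:G}) in order to choose representatives making the two composite seeds literally equal. Throughout I will use the hypothesis $x_j \notin F_i$ in the form $\hF_i|_{x_j \leftarrow 0} = \hF_i$: since $F_i$ does not involve $x_j$, the defining condition \eqref{hatF} forces the exponent of $x_j$ in the Laurent monomial prefactor of $\hF_i$ to vanish, so $\hF_i$ itself is free of $x_j$.

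First I would check the cluster variables. For every index $k \neq i,j$ the variable $x_k$ is fixed by both mutations, so these coincide on the nose. For the $i$-th variable, mutation at $j$ leaves $F_i$ unchanged up to a unit (by the mutation rule, since $F_i$ does not involve $x_j$) and, by the remark above, leaves $\hF_i$ free of $x_j$; recomputing the hat-polynomial of $F_i$ in the cluster of $\mu_j(t)$ therefore reproduces $\hF_i$ up to a unit, so the new $i$-th variable $\hF_i/x_i$ agrees in both orders. The substantive point is the $j$-th variable: writing $\hF_j^{(i)}$ for the exchange Laurent polynomial of the mutated polynomial $F_j'$ computed in $\mu_i(t)$, I must show that $\hF_j^{(i)} = \hF_j$ as elements of $\FF$ up to a unit, equivalently that mutating at $j$ before or after $i$ produces the same cluster variable.

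To handle the $j$-th variable I would argue directly from the mutation formula. Because $\hF_i|_{x_j\leftarrow 0} = \hF_i$, the substitution defining the $i$-mutation of $F_j$ in \eqref{E:G} is the clean substitution $x_i \leftarrow \hF_i/x_i'$, with no residual $x_j$-dependence entering from the $F_i$ side; by Remark~\ref{rem:subs} I may perform this substitution in $\hF_j$ in place of $F_j$, at the cost of a Laurent monomial. Clearing denominators then exhibits $\hF_j$, re-expressed in the variables of $\mu_i(t)$ via $x_i = \hF_i/x_i'$, as a Laurent polynomial which satisfies the two conditions of \eqref{hatF} characterizing $\hF_j^{(i)}$; the uniqueness half of \cite[Lemma 2.3]{LP} then identifies it with $\hF_j^{(i)}$ up to a monomial, giving $\hF_j^{(i)} = \hF_j$ in $\FF$ up to a unit. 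Here the hypothesis that $F_i/F_j$ is not a unit is what I expect to be the main obstacle: it is needed to control the step in which common factors with $\hF_i|_{x_j\leftarrow 0} = \hF_i$ are removed, ensuring (since $F_i$ and $F_j$ are irreducible and non-associate by (LP1)) that this cancellation strips off only powers of $F_i$ and never disturbs $F_j$, so that the non-divisibility clause of \eqref{hatF} is preserved symmetrically.

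Finally I would verify that the exchange polynomials agree up to units. For $k \neq i,j$ the two orders apply the substitutions $x_i \leftarrow \hF_i/x_i'$ and $x_j \leftarrow \hF_j/x_j'$ (with the relevant restrictions to $x_k = 0$), and since these act on different variables and $\hF_i$ is free of $x_j$, they commute; the only care needed is that the $\hF_j$ used after an $i$-mutation is $\hF_j^{(i)}$, which the previous paragraph reconciles with $\hF_j$. The polynomials $F_i$ and $F_j$ themselves are then pinned down using that mutation is an involution \cite[Proposition 2.10]{LP}, together with the uniqueness of the $\F \leftrightarrow \{\hF\}$ correspondence. Having matched all cluster variables and all exchange polynomials up to units, the two composite seeds are equivalent, and choosing the unit and monomial ambiguities appropriately yields the asserted equality $\mu_i(\mu_j(t)) = \mu_j(\mu_i(t))$.
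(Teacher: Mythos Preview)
Your overall architecture is right and mirrors the paper's: match the cluster variables, then the exchange polynomials, and exploit involutivity to close the argument. The paper phrases this as showing the four-step cycle $\mu_a\mu_b\mu_a\mu_b(t)=t$ returns to the start, which is equivalent to your direct comparison via involutivity. Your opening observation that $\hF_i$ is free of $x_j$ is also correct (though note you silently need the non-unit hypothesis here: if $F_i$ and $F_j$ were associate, the exponent of $x_j$ in $\hF_i$ would be $-1$, not $0$).

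The genuine gap is in the ``substantive point,'' your claim that $\hF_j^{(i)}=\hF_j$ as elements of $\FF$. You invoke Remark~\ref{rem:subs} to substitute in $\hF_j$ rather than $F_j$, but that remark requires $\hF_j/F_j$ to be free of $x_i$, which there is no reason to expect. More seriously, even granting some Laurent expression for $\hF_j$ in the new variables, you must verify \emph{each} defining condition \eqref{hatF} against the \emph{mutated} polynomials $F'_k$ of $\mu_i(t)$, not the original $F_k$; the uniqueness statement in \cite[Lemma 2.3]{LP} only identifies $\hF_j^{(i)}$ once those conditions are checked. This is exactly the content of the paper's Section~\ref{sec:proof4}.1: it introduces $\den(P,x,Q)=\min_k\bigl(k+\max(s:Q^s\mid p_k)\bigr)$ so that $\hF_j=F_j\prod_k x_k^{-\den(F_j,x_k,F_k)}$, and then proves the key Lemma that $\den(R',x_k,F'_k)=d_k(r-s)+\den(P,x_k,F_k)$ via a case split on whether $F_k$ depends on the mutated variable. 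That computation is the heart of the proof and your sketch does not supply it.

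A second, smaller gap is in your handling of $F_k$ for $k\neq i,j$: saying ``the two substitutions commute'' ignores the step in the mutation rule where one divides out common factors with $\hF_i|_{x_k=0}$ (respectively $\hF_j|_{x_k=0}$). The paper's Section~\ref{sec:proof4}.4 tracks these factors $A,B,C,D$ explicitly through the cycle and uses that none of them involve $a$ to conclude. So your plan is the right one, but two of its steps require real computations you have not yet done.
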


The proof of Theorem \ref{thm:comm} is postponed until Section \ref{sec:proof4}.

\subsection{Notation}

For $f,g \in \L(t)$ we shall write $f \propto g$ to mean that $f$ and $g$ are equal up to a unit in $\L(t)$.  That is, $f$ and $g$ differ multiplicatively by a (Laurent) monomial factor $r\prod_i x_i^{a_i}$ in the cluster variables of $t$, where $r \in R$ is a unit in $R$.  Note that this differs from the usage of $\propto$ in \cite{LP}.

\section{Nested complexes}
For a set $S$ and elements $i,j,k$ (usually not in $S$), we will use the concatenations $Si, Sij, Sijk$ to denote the unions $S \cup\{i\}$, $S \cup \{i,j\}$ and $S \cup\{i,j,k\}$.  Occasionally we will also concatenate subsets and elements such as $SiTj$ to indicate the union $S \cup \{i\} \cup T \{j\}$.

\subsection{Maximal nested collections}

Let $\Gamma$ be a directed loopless multiplicity-free graph on the vertex set $[n] = \{1,2,\ldots,n\}$. Thus, every edge $i \longrightarrow j$ is either present with multiplicity one or absent, for each ordered pair $(i,j)$, $i \not = j$. We identify subsets of vertices of $\Gamma$ with the corresponding induced subgraphs, for example we shall talk about {\it {strongly connected}} subsets, and so on.  Recall that we defined nested collections $\s = \{S_1,S_2,\ldots,S_k\} \subset \I$ in Section \ref{ss:nested}.  We say that a nested collection $\s$ is {\it maximal} for $S = \bigcup_i S_i$ if for any other nested collection $\s'$ with $\bigcup_i S'_i = S$ we have $\s \subset \s'$ implies $\s = \s'$.  A nested collection $\s$ is maximal if it is maximal for its support.


\begin{example} \label{ex:nested}
 Consider the graph in Example \ref{ex:first}.
The following are some of the families of nested subsets:
$$\{\{3\},\{4\},\{2,3,4\},\{1,2,3,4\}\},$$
$$\{\{3\},\{4\},\{1,3\},\{1,2,3,4\}\},$$
$$\{\{1\},\{1,2\},\{1,2,3\},\{1,2,3,4\}\}.$$
All of these families are maximal.
\end{example}

The following result can for example be found in \cite[Proposition 7.6]{Pos}. 

\begin{lemma}\label{lem:nested}
Suppose $\s=\{S_i\}_{i=1}^k$ is a maximal nested collection for $S = \bigcup_i S_i$.  Then there is a bijection $\phi: S \to \s$ given by the condition that $\phi(s)$ is the smallest subset in $\s$ containing $s$.  
\end{lemma}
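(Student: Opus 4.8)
The plan is to establish that $\phi$ is well-defined, then show it is a bijection by proving injectivity and surjectivity, exploiting the two nestedness axioms together with the maximality hypothesis.

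First I would check that $\phi$ is well-defined. For $s \in S$, the collection of subsets in $\s$ containing $s$ is nonempty (since $s$ lies in the union $S$, it belongs to some $S_i$). Among all subsets in $\s$ containing $s$, any two must be comparable: by the first nestedness axiom, two subsets of $\s$ are either nested or disjoint, and two subsets both containing $s$ cannot be disjoint, so they are comparable under inclusion. Hence the subsets of $\s$ containing $s$ form a chain under inclusion, and a finite nonempty chain has a unique minimal element. This is $\phi(s)$, so $\phi$ is a well-defined map $S \to \s$.

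Next I would prove surjectivity using maximality, which I expect to be the main obstacle. Fix $S_i \in \s$; I want some $s \in S_i$ with $\phi(s) = S_i$, i.e. $S_i$ is the smallest member of $\s$ containing $s$. Suppose not: then every $s \in S_i$ lies in some strictly smaller member of $\s$. Let $S_{j_1}, \ldots, S_{j_r}$ be the maximal members of $\s$ that are \emph{strictly} contained in $S_i$; by the chain argument above these are pairwise disjoint, and by assumption their union is all of $S_i$. The second nestedness axiom then forces the $S_{j_\ell}$ to be exactly the strongly connected components of $S_i$. But $S_i$ is itself strongly connected (it lies in $\I$), so it has a single strongly connected component, namely $S_i$ itself; this contradicts the $S_{j_\ell}$ being strict subsets, unless $S_i$ has no strict members below it, in which case the union is empty and $S_i$ is empty, which is impossible. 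The subtle point to handle carefully is whether maximality is genuinely needed here or whether the component axiom alone suffices, and whether a vertex $s \in S_i$ might fail to be covered because the strictly-smaller members do not exhaust $S_i$; it is precisely at this juncture that I would invoke maximality of $\s$ to rule out the possibility of inserting an additional nested subset, thereby guaranteeing every vertex of $S_i$ is accounted for.

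Finally, injectivity follows by a counting argument once surjectivity is in hand: since $\phi$ surjects from the finite set $S$ onto $\s$, and I can show $|\s| = |S|$, the map is forced to be a bijection. To see $|\s| = |S|$, note that maximal nested collections correspond to maximal faces of the extended nested complex, whose cardinality equals $|S|$; alternatively, the surjection $\phi$ together with the observation that distinct elements of $\s$ receive disjoint preimages (the preimage $\phi^{-1}(S_i)$ consists of those $s$ whose smallest containing member is exactly $S_i$) shows $\sum_i |\phi^{-1}(S_i)| = |S|$ with each term nonempty. The cleanest route, however, is simply to cite that surjectivity of $\phi$ between sets with $|\s| \le |S|$ combined with the structure of maximal nested collections forces equality; I would state this and defer the size count to the standard reference \cite[Proposition 7.6]{Pos}, since the statement is explicitly quoted from there.
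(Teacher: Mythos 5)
The paper itself gives no proof of this lemma: it is quoted verbatim from the literature with the citation \cite[Proposition 7.6]{Pos}, so any comparison is against your argument standing on its own. Your well-definedness and surjectivity arguments are correct and self-contained. In particular, the surjectivity step is clean: if $\phi$ missed $S_i$, the maximal members of $\s$ properly contained in $S_i$ would be pairwise disjoint with union $S_i$, and the second nestedness axiom would force them to be the strongly connected components of $S_i$; since $S_i\in\I$ is strongly connected this forces a single component equal to $S_i$, contradicting properness. Note that, as you half-suspected, this uses only the two nestedness axioms and the fact that members of $\s$ are strongly connected --- maximality of $\s$ plays no role in surjectivity.

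The genuine gap is injectivity. Your counting argument only runs one way: surjectivity plus the disjointness of the fibers $\phi^{-1}(S_i)$ gives $\sum_i|\phi^{-1}(S_i)|=|S|$ with every summand at least $1$, which proves $|\s|\le|S|$ but says nothing about the reverse inequality. The ``cleanest route'' you propose --- citing that maximal nested collections have cardinality $|S|$ --- is circular in this context, since that cardinality statement is essentially equivalent to the lemma and lives in the same reference. Injectivity is precisely where maximality of $\s$ must enter, and the missing argument is the following. Suppose $s\ne s'$ with $\phi(s)=\phi(s')=S_i$, and let $U$ be the union of the maximal members of $\s$ properly contained in $S_i$ (possibly empty); then $s,s'\notin U$. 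Let $W=U\oplus s$ be the strongly connected component of $U\cup\{s\}$ containing $s$. Then $W$ is strongly connected, $W\subseteq S_i\setminus\{s'\}\subsetneq S_i$, and $W\notin\s$ because no proper member of $\s$ below $S_i$ contains $s$. One checks that $\s\cup\{W\}$ is still nested and has the same support $S$, contradicting maximality of $\s$. Without some version of this step your proof does not establish the bijection.
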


 We will always assume that the subsets are indexed so that the bijection $\phi$ is the identity: that is, $S_i$ is the smallest subset containing $i$.  Lemma \ref{lem:nested} gives rise to a partial order $\preceq_\s$ on $S$: we have $i \preceq_\s j$ if and only if $S_i \subseteq S_j$.  Furthermore, this partial order uniquely determines $\s$.  For any non-maximal $i \in S$, there is a unique $j \in S$ which covers $i$ in this partial order.  We denote this element by $i^+$.  We denote the maximal elements of $S$ by $S^{\max}$.

Let $\s$ be a maximal nested collection.  Suppose $k \in S^{\max}$ is maximal.  Then removing $S_{k}$ from $\s$ gives a legitimate nested collection $\s^{k}$, which is a maximal nested collection for $S - \{k\}$.  Also let $k \in S$ be arbitrary.  Then define $\s_k$ by removing from $\s$ all subsets $S \supsetneq S_k$.  If $k \in S^{\max}$, then $\s_k = \s$.

 For $S \subset [n]$ denote by $S \oplus j$ the strongly connected component of $Sj$ that contains $j$. Denote by $S \ominus j = Sj - (S \oplus j)$ the elements of $S$ that are not included in $S \oplus j$.  Note that these definitions are made both for $j \in S$ and for $j \not \in S$.

\begin{example}
 Consider the maximal nested collection $\s = \{3\},\{4\},\{2,3,4\},\{1,2,3,4\}$ from Example \ref{ex:nested}. Then 
$$\phi(1) = \{1,2,3,4\}, \;\; \phi(2) = \{2,3,4\}, \;\; \phi(3) = \{3\}, \;\; \phi(4) = \{4\}.$$
So for the indexing associated with $\phi$ we have $S_2 = \{2,3,4\}$, and so on. The corresponding partial order is given by transitive closure of the relation $3 \preceq_\s 2$, the relation $4 \preceq_\s 2$ and the relation $2 \preceq_\s 1$. We also have 
$3^+ = 4^+ = 2$, $2^+ = 1$ and $\s_2 = \{3\},\{4\},\{2,3,4\}$, $\s_3 = \{3\},\{4\}$. Finally, we have $\{3,4\} \oplus 1 = \{1,3\}$, $\{3,4\} \ominus 1 = \{4\}$, $\{3,4\} \oplus 2 = \{2,3,4\}$, $\{3,4\} \ominus 2 = \emptyset$.
\end{example}

\subsection{Mutation of nested collections}
\label{sec:nestmutations}
We describe a way of building maximal nested collections from the empty nested collection.  Suppose $\s$ is a maximal nested collection with support $S$ and $s \notin S$.  We define $\mu_s(\s) = \s \cup \{S \oplus s\}$, which has support $S \cup \{s\}$.  This is a kind of {\it mutation} of $\s$ which we call {\it activation} at $s$.  Given an ordered subset $\vec{S} = (s_1,s_2,\ldots,s_k) \subset [n]$, we define $\s_0,\s_1,\ldots,\s_k = \s$ as follows.  We have $\s_0 = \emptyset$ and recursively define  $\s_{r+1} =\mu_{s_{r+1}}(\s_r)$. 

\begin{lemma}\label{lem:poset}
The activation $\mu_s(\s)$ for $s \notin S$ is a  maximal nested collection on $S \cup \{s\}$.  Any maximal nested collection can be obtained by activating a sequence $\vec{S} = (s_1,s_2,\ldots,s_k)$.
\end{lemma}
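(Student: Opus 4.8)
The plan is to separate the two assertions and to isolate two elementary facts about the strong components of the induced subgraph on $Ss = S \cup \{s\}$. Write $\mathrm{SCC}(W)$ for the partition of a vertex set $W$ into strong components of $\Gamma$ restricted to $W$. First, any strongly connected $I \subseteq Ss$ lies inside a single block of $\mathrm{SCC}(Ss)$, since its vertices are mutually reachable within $I$ and hence lie in one component. Second, and this carries the real weight, every directed cycle contained in a subgraph $\Gamma|_U$ with $U \subseteq Ss$ lies inside a single block of $\mathrm{SCC}(Ss)$: contracting the strong components of $Ss$ produces an acyclic condensation, and the image of such a cycle is a closed walk in a DAG, hence constant.

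For the first assertion I would first verify that $\mu_s(\s) = \s \cup \{T\}$, with $T = S \oplus s$ the block of $\mathrm{SCC}(Ss)$ containing $s$, is nested. The pairwise condition is immediate from the first fact: each $I \in \s$ is strongly connected and lies in $Ss$, so it sits in one block of $\mathrm{SCC}(Ss)$, forcing $I \subseteq T$ or $I \cap T = \emptyset$. A pairwise-disjoint subfamily avoiding $T$ is already a disjoint subfamily of $\s$, so its component condition is inherited. The only genuine case is a disjoint family $T, I_1, \ldots, I_m$ with $I_r \in \s$; set $U = T \cup \bigcup_r I_r$ and $V = \bigcup_r I_r$. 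Here $T$ is a block of $\mathrm{SCC}(U)$ by the first fact. If two of the $I_r$ lay in a common block of $\mathrm{SCC}(U)$, there would be a directed cycle in $\Gamma|_U$ meeting both; by the second fact that cycle lies in one block $D \neq T$ of $\mathrm{SCC}(Ss)$, hence misses $T$ and every $I_r \not\subseteq D$, so it lies inside $V$. That contradicts the component condition for $\s$, which makes the $I_r$ the blocks of $\mathrm{SCC}(V)$. So the blocks of $\mathrm{SCC}(U)$ are exactly $T, I_1, \ldots, I_m$, and $\mu_s(\s)$ is nested. Maximality is then a count: since $T \ni s$ is new, $|\mu_s(\s)| = |\s| + 1 = |S| + 1 = |Ss|$ by Lemma \ref{lem:nested}, and a nested collection whose cardinality equals the size of its support must be maximal (embed it in a maximal one with the same support and compare cardinalities via Lemma \ref{lem:nested}).

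For the second assertion I would induct on $|S|$, with the empty collection giving the empty activation sequence. Given a maximal $\s$ on $S \neq \emptyset$, choose a $\preceq_\s$-maximal element $k \in S^{\max}$. Applying the component condition of $\s$ to the pairwise-disjoint family $\{S_j : j \in S^{\max}\}$ identifies these top members with $\mathrm{SCC}(S)$; in particular $S_k$ is the strong component of $S$ containing $k$, and $k$ lies in no other member. Hence $\s' = \s \setminus \{S_k\}$ is nested with support $S \setminus \{k\}$, and it is maximal because its cardinality is $|S| - 1$. Finally $(S \setminus \{k\}) \oplus k$ is, by definition, the strong component of $S$ containing $k$, namely $S_k$, so $\mu_k(\s') = \s' \cup \{S_k\} = \s$; appending $k$ to the activation sequence furnished for $\s'$ by induction produces $\s$.

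The main obstacle is precisely the component condition in the first assertion for families containing the new block $T$: one must rule out that routing through $T$ merges two previously separate members $I_a, I_b$. The condensation argument of the second fact is the device that confines any offending cycle to a single strong component of $Ss$ disjoint from $T$, thereby reducing the statement back to the component condition already known for $\s$.
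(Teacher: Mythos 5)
Your argument is correct and is essentially the paper's proof with the omitted details supplied: the paper checks nestedness of $\mu_s(\s)$ ``directly from the definition'' and gets maximality ``by counting,'' which is exactly your condensation-DAG verification plus the cardinality comparison via Lemma \ref{lem:nested}; and your induction that peels off a $\preceq_\s$-maximal element is the same mechanism as the paper's observation that activating along a linear extension of $\preceq_\s$ recovers $\s$.
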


\begin{proof}
The fact that $\mu_s(S)$ is nested is checked directly from the definition, and maximality follows from counting.  For the second statement, recall that maximal nested collections $\s$ have a partial order $\preceq_\s$, given by the ordering $i \preceq_\s j$ if $S_i \subset S_j$. Now, any sequence $(s_1,s_2,\ldots,s_k)$ of activations that agree with the partial order, that is, satisfies $s_r \preceq_\s s_{r'} \implies r \leq r'$, yield the desired $\s$.
\end{proof}

\begin{example}
 Consider the activation sequence $(3,4,2,1)$ for the graph in Example \ref{ex:first}. Using
$$\emptyset \oplus 3 = \{3\}, \;\; \{3\} \oplus 4 = \{4\}, \;\; \{3,4\} \oplus 2 = \{2,3,4\}, \;\; \{2,3,4\} \oplus 1 = \{1,2,3,4\}$$
we see that the result is the maximal nested collection $\{\{3\},\{4\},\{2,3,4\},\{1,2,3,4\}\}$. If on the other hand we used the activation sequence $(3,4,1,2)$, we would end up with $\{3\},\{4\},\{1,3\},\{1,2,3,4\}$.
\end{example}

Let $\vec{S} = (s_1, \ldots, s_k)$ be a sequence of activations. We call a pair of activations $s_i$ and $s_{i+1}$ {\it {exchangeable}} if $S_{s_i}$ and $S_{s_{i+1}}$ are disjoint. 

\begin{lemma}\label{lem:exc}
If we swap an exchangeable pair of activations in $\vec{S}$, the resulting collection $\s$ does not change. Furthermore, any two sequences of activations that yield the same $\s$ can be transformed one into another by swapping exchangeable pairs.
\end{lemma}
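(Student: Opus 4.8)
The plan is to identify the activation sequences that yield a fixed maximal nested collection $\s$ with the linear extensions of the associated poset $(S,\preceq_\s)$ from Lemma \ref{lem:nested}, and then to invoke the classical fact that any two linear extensions of a finite poset are connected by adjacent transpositions of incomparable elements. The basic tool throughout is the nesting property that two members of $\s$ are either comparable (one contained in the other) or disjoint; consequently incomparability in $\preceq_\s$ is literally the same condition as disjointness, which will let me translate ``exchangeable'' into ``incomparable.''

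First I would record the effect of a single activation: starting from $\s_{r-1}$ with support $T_{r-1}=\{s_1,\ldots,s_{r-1}\}$, activating $s_r$ appends the set $A_r:=T_{r-1}\oplus s_r$, which is the strongly connected component of $T_r:=\{s_1,\ldots,s_r\}$ containing $s_r$. Since $s_r\in A_r\subseteq T_r$, the sets $A_r$ are pairwise distinct and the support grows by the single element $s_r$, so any sequence producing $\s$ is a permutation of $S$ and produces the collection $\{A_1,\ldots,A_k\}$, which must coincide with $\s$.

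Next I would establish the bijection. For the forward direction, suppose $(s_1,\ldots,s_k)$ yields $\s$ but is not a linear extension, so some relation $x\prec_\s y$ has $y=s_p$ and $x=s_q$ with $p<q$. Then $A_p\in\s$ contains $y$, hence contains the minimal member $S_y$ of the chain of sets of $\s$ through $y$, and in particular $x\in S_y\subseteq A_p$; but $A_p\subseteq T_p$ while $x=s_q\notin T_p$, a contradiction. The reverse direction, that every linear extension yields $\s$, is precisely the second statement of Lemma \ref{lem:poset}. Along the way I would check that in a linear extension the set added at step $r$ is exactly $S_{s_r}$: it contains $S_{s_r}$ because $A_r\in\s$ contains $s_r$, and if it were strictly larger it would equal $S_y$ for some $y$ with $s_r\prec_\s y$ lying in $T_r$, forcing both $\mathrm{pos}(y)\le r$ and $r<\mathrm{pos}(y)$. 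This identification shows that a consecutive pair $s_i,s_{i+1}$ is exchangeable (that is, $S_{s_i}\cap S_{s_{i+1}}=\emptyset$) exactly when the corresponding elements are incomparable in $\preceq_\s$.

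With the bijection in place both claims follow. For the first, a sequence yielding $\s$ is a linear extension, and swapping an adjacent exchangeable (hence incomparable) pair yields another linear extension, which again produces $\s$ by Lemma \ref{lem:poset}. For the second, two sequences yielding $\s$ are two linear extensions, and I would connect them by the standard bubbling argument: the first element of the target extension is a minimal element of $\preceq_\s$, so every element preceding it in the source extension is incomparable to it and it can be carried to the front by adjacent incomparable (exchangeable) swaps, after which one recurses on the poset with that minimal element deleted; every intermediate sequence is again a linear extension, hence a valid activation sequence producing $\s$. The main obstacle is the forward direction of the bijection together with the verification that exchangeability matches incomparability; once these are secured, the connectivity statement is the classical linear-extension result.
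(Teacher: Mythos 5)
Your proof is correct and follows essentially the same route as the paper's: identify activation sequences yielding $\s$ with linear extensions of $\preceq_\s$ via Lemma \ref{lem:poset}, match exchangeability with incomparability (equivalently, disjointness, by nestedness), and invoke connectivity of linear extensions under adjacent transpositions of incomparable elements. The only difference is that you spell out the bijection and the bubbling argument that the paper dismisses as ``a simple combinatorial fact that can easily be verified.''
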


\begin{proof}
 The first claim is clear, since if $s_i$ and $s_{i+1}$ are not strongly connected via $(s_1, \ldots, s_{i-1})$, it does not matter which one of the two we activate first. For the second claim, observe that Lemma \ref{lem:poset} gives a bijection between activation sequences yielding $\s$ and linear extensions of $\preceq_\s$.  Thus our statement is equivalent to the following combinatorial statement: any two linear extensions of a poset $P$ can be connected by successively swapping the positions of $i$ and $i+1$, while always staying a linear extension.  This is a simple combinatorial fact that can easily be verified.
\end{proof}

So far we have discussed activating an element (or vertex) $s \notin S$.  Now we describe the mutation $\mu_s(\s)$ for $s \in S$.  If $s \in \s^{\max}$ then $\mu_s(\s) = \s^s$ is obtained from $\s$ by removing $S_s$.  In this case we can say that $s$ has been {\it deactivated} and $\mu_s(\s)$ is a maximal nested collection with support $\s - \{s\}$.  If $s \notin s^{\max}$, we define $\mu_s(\s)$ to be the collection obtained from $\s$ by removing $S_s$ and adding the new subset $S_{s^+}-s \oplus s^+$.  We call this operation {\it internal mutation} at $s$.

\begin{example}
 Take the maximal collection $\{3\},\{4\},\{2,3,4\},\{1,2,3,4\}$, and take $s = 2$. Then $2^+ = 1,$ $S_1 - 2 = \{1,2,3,4\}-2 = \{1,3,4\}$ and $\{1,3,4\} \oplus 1 = \{1,3\}$. Thus, we remove $S_2 = \{2,3,4\}$ from the family, and replace it by $\{1,3\}$, thus obtaining the new collection $\{3\},\{4\},\{1,3\}$,$\{1,2,3,4\}$.
\end{example}

\begin{lemma}
The internal mutation $\mu_s(\s)$ for $s \in S - S^{\max}$ is a maximal nested collection on $S$.  Furthermore, $\mu_s(\s)$ is the unique maximal nested collection with the same support as $\s$, other than $\s$, which contains all the subsets $\{S_r \mid r \neq s\}$.
\end{lemma}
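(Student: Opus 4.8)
The plan is to prove the two assertions separately, using throughout the following reformulation of the second defining condition of a nested collection: for pairwise disjoint members $U_1,\dots,U_r$ ($r\ge 2$), the requirement that they be the strongly connected components of their union is equivalent to requiring that no sub-union $\bigcup_{i\in J}U_i$ with $|J|\ge 2$ be strongly connected (since each $U_i$ is strongly connected, the components of a union are always unions of the $U_i$). I will also use the standard description of a maximal nested collection coming from Lemma \ref{lem:nested}: for each $i\in S$ the sets $S_j$ with $j^+=i$ are pairwise disjoint, their union is $S_i\setminus\{i\}$, and they are exactly the strongly connected components of $S_i\setminus\{i\}$. Write $P:=S_{s^+}$ and $T:=(S_{s^+}-s)\oplus s^+$ for the set removed-then-added, so $\mu_s(\s)=(\s\setminus\{S_s\})\cup\{T\}$.

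For the first assertion, $T\in\I$ is immediate because $\oplus$ selects a strongly connected component. Pairwise nestedness of $\mu_s(\s)$ reduces to comparing $T$ with each retained $S_r$ ($r\ne s$), by a short case analysis on the position of $S_r$ relative to $P$: if $S_r\supseteq P$ or $S_r$ is disjoint from $P$ we are done at once; and if $S_r\subsetneq P$ with $r\ne s^+$, then the fact that $s^+$ covers $s$ in $\preceq_\s$ forces $s\notin S_r$, so $S_r\subseteq P-s$ lies in a single component of $P-s$ and is therefore either contained in $T$ or disjoint from it. Support is preserved because every element other than $s$ keeps its old smallest set, $s$ is still covered by $P$, and $s^+\in T$; so $\mu_s(\s)$ has support $S$ and $|S|$ members, whence maximality by Lemma \ref{lem:nested} (and $T\notin\s$, so $\mu_s(\s)\ne\s$). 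The genuinely technical point is the reformulated second condition. Using $P-s=\{s^+\}\sqcup\bigsqcup_i S_{r_i}\sqcup\bigsqcup_j S_{q_j}$, where the $S_{r_i}$ are the components of $S_s-s$ and the $S_{q_j}$ the remaining children-sets of $s^+$, I would take a pairwise disjoint subfamily of $\mu_s(\s)$ containing $T$, say $T,W_1,\dots,W_t$, and show its union is not strongly connected. Each $W_j$ is either contained in a non-$T$ component of $P-s$ (when $W_j\subsetneq P$) or disjoint from $P$; if every $W_j\subseteq P$ the union stays inside $P-s$ and cannot be strongly connected, as it meets two distinct components, and otherwise I compare with the family $\{P\}\cup\{W_j:W_j\cap P=\emptyset\}$ in $\s$, whose members are the components of their union by nestedness of $\s$, so a strongly connected $T\cup\bigcup W_j\subseteq P\cup\bigcup W_j$ would have to lie in one component while meeting both $P$ and some $W_j$ — a contradiction.

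For the uniqueness assertion, set $\s'=\s\setminus\{S_s\}$, which is nested with support $S$ but only $|S|-1$ members, so any maximal nested collection with support $S$ containing $\{S_r:r\ne s\}$ has the form $\s'\cup\{V\}$ for a single strongly connected $V\notin\s'$; the task is to show $V\in\{S_s,T\}$. The smallest-set map of $\s'$ sends both $s$ and $s^+$ to $P$, and since the corresponding map of $\s'\cup\{V\}$ must be a bijection, $V$ must separate $s$ from $s^+$. This rules out $V\supseteq P$ and $V$ disjoint from $P$, and forces $V\subsetneq P$ containing exactly one of $s,s^+$. If $s\in V$ then $s^+\notin V$ and $V$ lies in the component $S_s$ of $P-s^+$, so $V\subseteq S_s$; if $s\notin V$ then $s^+\in V$ and $V\subseteq T$. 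In either case I then argue $V$ is the whole component: were it a proper subset, some child-set disjoint from $V$ would exist, and $\{V\}$ together with all such disjoint child-sets would be a pairwise disjoint family whose union is the full strongly connected set $S_s$ (resp. $T$), violating the reformulated second condition. This pins $V$ to $S_s$, recovering $\s$, or to $T$, recovering $\mu_s(\s)$.

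I expect the main obstacle to be this final forcing step of the uniqueness argument: controlling, via the bijection of Lemma \ref{lem:nested} together with the second nested condition, that the added set $V$ cannot be a \emph{proper} strongly connected piece of the relevant component but must be the entire component. The preliminary reductions — that $V\subsetneq P$, and which of $s,s^+$ it contains — are routine injectivity bookkeeping, whereas excluding all proper candidates $V$ is where the strong-connectivity reformulation does the real work; the same reformulation is also the crux of the second nested condition in the first assertion, so establishing and exploiting it cleanly is the key to the whole proof.
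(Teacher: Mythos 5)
Your proof is correct and follows essentially the same route as the paper's: pairwise nestedness of $T=(S_{s^+}-s)\oplus s^+$ against the retained $S_r$ via the covering relation $s^+=s^+$, the second nested condition by localizing any failure inside $S_{s^+}$ and using that $T$ is a strongly connected component of $S_{s^+}-s$, and uniqueness via the bijection of Lemma \ref{lem:nested}. Your treatment of the uniqueness step (forcing the new set $V$ to be an entire component of $S_{s^+}-s^+$ or of $S_{s^+}-s$ rather than a proper strongly connected piece) fills in details that the paper's very terse argument leaves implicit, but it is the same underlying idea.
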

\begin{proof}
Maximality and the claim about the support of $\s' = \mu_s(\s)$ are clear.  We check that $\s'$ is nested.  First note that we have $S'_s = S_{s^+}$ and $S'_{s^+} = S_{s^+}-s \oplus s^+$.

Suppose $S_r \in \s'$ and $S'_{s^+} = S_{s^+}-s \oplus s^+$ intersect non-trivially.  Then $S_r$ intersects $S_{s^+}$ non-trivially, and so must be contained in $S_{s^+}$.  But then it is clear from the definition of $S'_{s^+}$ that either $S_r \subset S'_{s^+}$ or $s \in S_r$.  In the latter case we must have $S_r = S_{s^+}$.

Now suppose we have a disjoint collection $S_{r_1},S_{r_2},\ldots,S_{r_k},S'_{s^+} \in \s'$.  We need to show that they are the strongly connected components of their union.  The only possible way this can fail is if we can find $i_1,i_2,\ldots,i_j$ so that $S_{r_{i_1}},S_{r_{i_2}},\ldots,S_{r_{i_j}}$ and $S'_{s^+}$ have a strongly connected union which is a subset of $S_{s^+}$.  This is impossible since none of $S_{r_{i_1}},S_{r_{i_2}},\ldots,S_{r_{i_j}}$ contain $s$, and the assumption that $\s$ was nested.

For the last statement, suppose $\s'$ is a maximal nested collection containing all the subsets $\{S_r \mid r \neq s\}$.  For any $r \neq s,s^+$ it is clear that $\s'_r = \s_r$, so by Lemma \ref{lem:nested} we have that $\s'$ is determined by whether we have $\s'_s = \s_s$ or $\s'_s = \s_{s^+}$.  In the first case we have $\s' = \s$, and the in latter case we have $\s' = \mu_s(\s)$.
\end{proof}

\subsection{The exchange graph on maximal nested collections}\label{sec:exc}
Let $\C$ denote the set of all maximal nested collections $\s$ with support equal to any subset $S \subset [n]$.  We define a $n$-regular graph with vertex set $\C$ by connecting $S$ and $\mu_s(\s)$ by an edge labeled $s$.  Note that we never have $\mu_s(\s) = \s$.

\section{Acyclic functions}
\label{sec:Y}
\subsection{Acyclic functions and strongly connected components}
We first give an example of the $Y$ Laurent polynomial defined in Section \ref{ss:Y}.
\begin{example}\label{ex:Y}
Consider the graph $\Gamma$ of Example \ref{ex:first}.  Then we have 
$$Y_{124} = \frac{X_1(X_2(X_3+A_1)+A_4(X_3+X_4+A_1)) + (X_2+A_4)(X_2+X_3+X_4+A_1)(X_3+A_2)}{X_1 X_2 X_4}.$$
For example, the term $X_2 X_3 A_2$ corresponds to the function $f(1) = 3$, $f(2)=2$, $f(4)=2$. The functions $f$ that do {\it {not}} contribute to the numerator are the functions that have either $f(1)=2, f(2)=1$ (and $f(4) = 4 \; \text{or} \; f(4)=2$), or the function $f(1)=4, f(4)=2, f(2)=1$.
\end{example}

We continue to fix a multiplicity-free loopless directed graph $\Gamma$ on $[n]$. 

\begin{lemma} \label{lem:Yfac}
 Let $I_1, \ldots, I_k \in \I$ be the strongly connected components of $I$. Then $Y_I = \prod_{j=1}^k Y_{I_j}$.
\end{lemma}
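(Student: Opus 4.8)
The plan is to exhibit a weight-preserving bijection between the acyclic functions on $I$ and the tuples of acyclic functions on the strongly connected components $I_1,\dots,I_k$, and then to read off that this bijection converts $\prod_j Y_{I_j}$ into $Y_I$. Since $I = I_1 \sqcup \cdots \sqcup I_k$ as a set, the denominator factors immediately as $\prod_{i\in I} X_i = \prod_{j=1}^k \prod_{i\in I_j} X_i$, so it suffices to prove the numerators multiply, i.e.
$$
\sum_{\text{acyclic } f\colon I\to[n]} \prod_{i\in I}\X_{f(i)} \;=\; \prod_{j=1}^k\Biggl(\sum_{\text{acyclic } f_j\colon I_j\to[n]} \prod_{i\in I_j}\X_{f_j(i)}\Biggr).
$$

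Next I would set up the bijection on the level of all allowed functions (not yet imposing acyclicity). Any function $f\colon I\to[n]$ of the allowed type, meaning each $i\to f(i)$ is an edge of $\Gamma$ or a loop, restricts to functions $f_j := f|_{I_j}\colon I_j \to [n]$ of the same allowed type; conversely a tuple $(f_1,\dots,f_k)$ assembles uniquely into such an $f$ because the $I_j$ partition $I$. Under this correspondence the weight factors as $\prod_{i\in I}\X_{f(i)} = \prod_{j=1}^k \prod_{i\in I_j}\X_{f_j(i)}$, so expanding the right-hand product into a sum over tuples matches the left-hand sum term by term, provided the bijection restricts to one on acyclic functions.

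The heart of the argument, and the step I expect to be the main obstacle, is the claim that $f$ is acyclic if and only if every restriction $f_j$ is acyclic. For this I would first observe that in the functional graph of any allowed $f$ only the vertices of the domain carry an outgoing edge; hence every directed cycle consists entirely of vertices of $I$ and is a directed cycle in the induced subgraph $\Gamma$ on $I$. Any directed cycle in an induced subgraph lies within a single strongly connected component, i.e.\ within one $I_j$, and for each vertex $i$ on that cycle we have $f(i)\in I_j$, so the cycle is in fact a cycle of $f_j$. Thus every non-loop cycle of $f$ is a non-loop cycle of exactly one $f_j$, and conversely every non-loop cycle of some $f_j$ is a non-loop cycle of $f$; therefore $f$ admits a non-loop cycle precisely when some $f_j$ does, which is the desired acyclicity equivalence.

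Finally I would combine these observations: the bijection of the second paragraph restricts to a weight-preserving bijection between acyclic $f\colon I\to[n]$ and tuples $(f_1,\dots,f_k)$ of acyclic functions on the $I_j$. This proves the displayed numerator identity, and dividing by the factored denominator yields $Y_I = \prod_{j=1}^k Y_{I_j}$.
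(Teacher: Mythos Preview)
Your proof is correct and follows essentially the same approach as the paper's. The paper compresses the argument to a single sentence---``Any combination of acyclic functions on the $I_j$ yields an acyclic function on $I$, since there is no cycle that can be created in between different strongly connected components of $I$''---whereas you spell out the weight-preserving bijection and carefully verify both directions of the acyclicity equivalence; the key observation in each case is that a non-loop cycle of the functional graph must lie within a single strongly connected component.
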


\begin{proof}
Any combination of acyclic functions on the $I_j$ yields an acyclic function on $I$, since there is no cycle that can be created in between different strongly connected components of $I$.
\end{proof}

\begin{lemma}\label{lem:YiS}
 We have $Y_{Si} = Y_{S \oplus i} Y_{S \ominus i}$.
\end{lemma}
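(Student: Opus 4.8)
The plan is to reduce the identity to the multiplicativity of $Y$ over strongly connected components established in Lemma~\ref{lem:Yfac}, combined with a purely combinatorial description of how the strongly connected components of $Si$ break up. Recall that by definition $S \oplus i$ is the strongly connected component of $Si$ containing $i$, and $S \ominus i = Si - (S \oplus i)$ collects the remaining vertices. The key structural claim I would prove is that the strongly connected components of $Si$ are exactly $S \oplus i$ together with the strongly connected components of the induced subgraph on $S \ominus i$.

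To establish this claim, first note that $S \oplus i$ is one strongly connected component of $Si$ by definition, and the remaining vertices are precisely $S \ominus i$. It then suffices to show that two vertices $u, v \in S \ominus i$ lie in the same strongly connected component of $Si$ if and only if they lie in the same strongly connected component of the induced subgraph on $S \ominus i$. The ``if'' direction is immediate, since any cycle inside $S \ominus i$ is also a cycle in $Si$. For the ``only if'' direction, suppose $u$ and $v$ are strongly connected within $Si$; if a directed path realizing this connectivity passed through a vertex $w \in S \oplus i$, then $u$ and $w$ would lie on a common directed cycle in $Si$, forcing $u \in S \oplus i$, a contradiction. Hence all the relevant paths stay within $S \ominus i$, proving the claim.

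With the claim in hand, the lemma follows directly: applying Lemma~\ref{lem:Yfac} to $Si$ gives $Y_{Si} = Y_{S \oplus i} \cdot \prod_{J} Y_J$, where $J$ ranges over the strongly connected components of $S \ominus i$; applying Lemma~\ref{lem:Yfac} a second time to $S \ominus i$ identifies $\prod_J Y_J = Y_{S \ominus i}$, yielding $Y_{Si} = Y_{S \oplus i}\, Y_{S \ominus i}$. The edge cases are harmless: when $S \ominus i = \emptyset$ the empty product is $1$ and $Si = S \oplus i$ is strongly connected, while the argument is insensitive to whether $i \in S$ or $i \notin S$.

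The main obstacle is the structural claim about strongly connected components, and specifically the subtlety that the strongly connected components of $Si$ lying inside $S \ominus i$ really do coincide with those of the induced subgraph on $S \ominus i$ --- in other words, that adjoining $i$ cannot merge two components of $S \ominus i$ through a path routed via $S \oplus i$. The cycle argument above is what rules this out, and once it is spelled out the remainder is a formal consequence of Lemma~\ref{lem:Yfac}.
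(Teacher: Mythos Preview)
Your proposal is correct and follows essentially the same approach as the paper: the paper's proof is simply ``Clear from Lemma~\ref{lem:Yfac},'' and you have spelled out in detail the structural fact (that the strongly connected components of $Si$ are $S\oplus i$ together with the strongly connected components of $S\ominus i$) that makes this application of Lemma~\ref{lem:Yfac} work.
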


\begin{proof}
 Clear from Lemma \ref{lem:Yfac}.
\end{proof}

\begin{lemma} \label{lem:irr}
Suppose $I \in \I$ is strongly connected.  Then the numerator of $Y_I$ is irreducible in the ring of polynomials in $X_i$.  Furthermore, these numerators are pairwise coprime.
\end{lemma}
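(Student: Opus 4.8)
The plan is to treat the numerator of $Y_I$ as a polynomial in which the $A$-variables play a distinguished bookkeeping role. Write $N_J := \sum_{\text{acyclic }f\colon J\to[n]}\prod_{i\in J}\X_{f(i)}$ for the numerator, now allowed for an \emph{arbitrary} $J\subseteq[n]$, and regard it inside $\mathbb{Q}[A_1,\dots,A_n,X_1,\dots,X_n]$ with all $A_i,X_i$ treated as indeterminates. I would first record three elementary facts that drive everything. First, $N_J$ is homogeneous of degree $|J|$ and is \emph{multilinear} in $\{A_i\mid i\in J\}$, since $A_i$ can arise only from the single factor at $i$ when $f(i)=i$. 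Second, the all-loops function gives the monomial $\prod_{i\in J}A_i$ with coefficient $1$, and $A_i$ occurs in $N_J$ if and only if $i\in J$. Third, the numerator form of Lemma \ref{lem:Yfac}, namely $N_J=\prod_C N_C$ over the strongly connected components $C$ of $J$, holds by the same proof, since an acyclic function on $J$ is exactly a tuple of acyclic functions on the components.

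Pairwise coprimality then follows from irreducibility. If $I\ne J$ are strongly connected, any common factor of the homogeneous polynomials $N_I,N_J$ would force them to be associate; but by the second fact the set of $A$-variables occurring in $N_I$ is exactly $\{A_i\mid i\in I\}$, which differs from that of $N_J$. Hence $N_I$ and $N_J$ are non-associate irreducibles, so coprime.

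For irreducibility I would induct on $|I|$. When $|I|=1$, $N_{\{i\}}=A_i+\sum_{i\to j}X_j$ is a primitive polynomial of degree one, hence irreducible. For the inductive step fix $v\in I$ and use multilinearity in $A_v$ to write
$$N_I = A_v\,N_{I\setminus v}+R_v,\qquad R_v := N_I|_{A_v=0},$$
where $N_{I\setminus v}$ and $R_v$ are free of $A_v$. Being of degree one in $A_v$, the polynomial $N_I$ is irreducible exactly when its coefficients $N_{I\setminus v}$ and $R_v$ are coprime. By the third fact $N_{I\setminus v}=\prod_C N_C$ over the strongly connected components $C$ of $I\setminus v$, each $N_C$ irreducible by induction, so it remains to prove
$$N_C\nmid R_v\qquad\text{for every strongly connected component }C\text{ of }I\setminus v.\quad(\ast)$$
(I note in passing that the matrix--forest theorem identifies $N_I$ with the determinant of the $I\times I$ matrix having $F_i$ on the diagonal and $-X_j$ in position $(i,j)$ for each edge $i\to j$; this determinantal picture, tied to the Dodgson identity, offers an alternative route but does not make $(\ast)$ obvious.)

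Establishing $(\ast)$ is the heart of the proof, and the key is again the $A$-grading combined with strong connectivity of $I$. Suppose $R_v=N_C\,G$. Since $A_i$ ($i\in C$) occurs in $N_C$ and is multilinear in $R_v$, the cofactor $G$ must be free of $\{A_i\mid i\in C\}$; writing $E:=(I\setminus v)\setminus C$ and matching the coefficient of each monomial $\prod_{i\in(I\setminus v)\setminus D}A_i$ on both sides yields a family of identities
$$B_\emptyset^{\,D\cup\{v\}}\;=\;B^{\,C}_{C\setminus D}\cdot g_{E\setminus D}\qquad(D\subseteq I\setminus v),$$
where $B_\emptyset^{\,K}$ is the fixed-point-free acyclic generating function on $K$ and $g_{E\setminus D}$ is the coefficient of $\prod_{i\in E\setminus D}A_i$ in $G$. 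The right-hand side factors completely through the partition $C\sqcup(E\cup\{v\})$, whereas the left-hand side should not: because $I$ is strongly connected, $v$ lies on a directed cycle inside $I$, and taking $D$ to be the remaining vertices of a shortest such cycle, the acyclicity constraint \emph{deletes} precisely the monomial of that cycle from $B_\emptyset^{\,D\cup\{v\}}$, a cancellation that a product factoring across $C$ and $E\cup\{v\}$ cannot reproduce. The hard part will be exactly this last bookkeeping: ruling out accidental coincidences so that the missing cycle term is genuinely detected. I expect that taking a shortest, hence chordless, cycle through $v$ and comparing leading terms under a monomial order privileging the cycle variables will isolate that term and force the contradiction, completing $(\ast)$ and the induction. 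Finally, irreducibility over $\mathbb{Q}[A,X]$ together with primitivity over $\mathbb{Z}$ (the monomial $\prod_{i\in I}A_i$ has coefficient $1$) transfers irreducibility to $R[X_1,\dots,X_n]$, and the same constant term shows that no variable $X_j$ divides $N_I$.
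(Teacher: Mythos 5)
Your reduction of irreducibility to the statement $(\ast)$ is sound: writing $N_I=A_v N_{I\setminus v}+R_v$ and invoking Gauss's lemma, together with the inductive factorization $N_{I\setminus v}=\prod_C N_C$ into irreducibles, does reduce everything to showing that no component numerator $N_C$ divides $R_v=N_I|_{A_v=0}$. But $(\ast)$ is precisely where your argument stops being a proof: the passage beginning ``I expect that taking a shortest, hence chordless, cycle through $v$\dots'' is a conjectured strategy, not an argument. The coefficient identities you extract only say that each slice $B_\emptyset^{\,D\cup\{v\}}$ factors as a product of a piece supported on $C$ and a piece supported on $E\cup\{v\}$; ruling out that such factorizations can all simultaneously hold is exactly the content of the lemma, and your own text concedes that the ``accidental coincidences'' have not been excluded. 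As written, the inductive step is therefore incomplete. A secondary, fixable omission: your coprimality argument invokes homogeneity to force a common factor to make $N_I$ and $N_J$ associate, but two non-associate irreducibles in a UFD are automatically coprime, so the homogeneity detour is unnecessary (and the observation that $\prod_{i\in I}A_i$ occurs only in $N_I$ already separates them).

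The paper's proof sidesteps the difficulty you ran into by not privileging a single vertex $v$. Given a factorization $N_I=PQ$, multilinearity in \emph{all} the $A_i$, $i\in I$, splits the index set as $I=I_P\sqcup I_Q$ according to which factor contains $A_i$. For $j\in I_P$, the coefficient of $A_j$ in the \emph{full} polynomial $N_I$ (not in the specialization $R_v$) is $N_{I\setminus\{j\}}$, which by induction is a product of pairwise coprime irreducibles $N_J$ over the strongly connected components $J$ of $I\setminus\{j\}$; since $Q$ divides this coefficient, $Q$ is a subproduct of these $N_J$, and running over all $j\in I_P$ pins down $Q=N_{I_Q}$ and $P=N_{I_P}$. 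The contradiction is then purely combinatorial: $N_{I_P}N_{I_Q}$ counts pairs of functions acyclic on each part whose union has a cycle crossing between $I_P$ and $I_Q$ (such cycles exist because $I$ is strongly connected), so $N_I\neq N_{I_P}N_{I_Q}$ as all coefficients are nonnegative. If you want to salvage your single-vertex setup you would essentially have to reprove this; the cleaner repair is to abandon the specialization $A_v=0$ and work with the coefficient of $A_j$ in $N_I$ itself, which is where the inductive hypothesis actually applies.
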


\begin{proof}
The proof is by induction on the size of $I$, and the base case $|I|=1$ is clear. Assume the numerator of $Y_I$ factors as $PQ$. Since it is multilinear in the $A_i$, $i \in I$, so must be each of the factors $P$ and $Q$. Furthermore, each $A_i$ appears either in $P$ or in $Q$ but not both. Let $I_P$ and $I_Q$ be the sets of $i$-s such that $A_i$ appears in the corresponding factor. Let $j \in I_P$. Then since numerator of $Y_I$ is linear in $A_j$, the coefficient of $A_j$ must be divisible by $Q$. However, this coefficient is the numerator of $Y_{I - \{j\}}$. By induction assumption, and from the fact that the polynomial ring is a unique factorization domain, it follows that $Q$ must be product of numerators of $Y_J$ for some strongly connected components $J$ of $I - \{j\}$. Since this is true for any $j \in I_P$, and since each $A_i$, $i \not \in I_P$ must appear in $Q$, we conclude that $Q$ is the numerator of $Y_{I_Q}$. Similarly $P$ is the numerator of $Y_{I_P}$. However, the identity $Y_I = Y_{I_P} Y_{
I_Q}$ fails, since $I$ is strongly connected and thus there are cycles that fail to be cycles when restricted to $I_P$ or $I_Q$.

The last statement is easy: the numerator of $Y_I$ has a monomial $\prod_{i\in I} A_i$ which occurs in no other numerator.
\end{proof}

\subsection{Acyclic functions as determinants}

We shall use the notation $Y_i := Y_{\{i\}}$. Let $\Y = (\y_{ij})$ be the $n \times n$ matrix defined by 
$$
\y_{ij} = 
\begin{cases}
Y_{i} & \text{if $i=j$;}\\
-1 & \text{if there is an edge $i \longrightarrow j$ in $\Gamma$;}\\
0 & \text{otherwise.}
\end{cases}
$$
Denote $\Y_I$ the principal minor of $\Y$ with row and column indices $I$. 
\begin{prop} \label{prop:det}
We have
$$Y_I = \det(\Y_I).$$
\end{prop}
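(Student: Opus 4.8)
The plan is to expand $\det(\Y_I)$ along the definition of the determinant and match each surviving permutation term with an acyclic function $f\colon I \to [n]$ appearing in the numerator of $Y_I$. Recall that $\Y_I$ has diagonal entries $\y_{ii} = Y_i = (A_i + \sum_{i \to j} X_j)/X_i$, off-diagonal entries $\y_{ij} = -1$ whenever $i \to j$ is an edge of $\Gamma$ (with both $i,j \in I$), and $0$ otherwise. The key observation is that a permutation $\sigma$ of $I$ contributes a nonzero term to $\det(\Y_I)$ only if, for each $i$, either $\sigma(i) = i$ (picking up the diagonal factor $Y_i$) or $i \to \sigma(i)$ is an edge of $\Gamma$ (picking up a factor $-1$). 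So the nonzero permutations are exactly those built from fixed points and from cycles whose consecutive pairs are edges of $\Gamma$ contained in $I$.

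First I would set up the correspondence between permutations $\sigma$ of $I$ and functions $f\colon I \to [n]$ by restricting attention to the fixed-point structure. A fixed point $\sigma(i) = i$ should correspond to $f(i) = i$, contributing the diagonal factor $Y_i = (A_i + \sum_{i\to j} X_j)/X_i$; note this single diagonal entry is itself a \emph{sum} over the possible edges out of $i$ together with the loop term $A_i$, so expanding the product of diagonal entries already generates many of the monomials $\prod_i \X_{f(i)}/\prod_i X_i$ in the numerator of $Y_I$. A genuine cycle $(i_1\, i_2\, \cdots\, i_\ell)$ of $\sigma$ with $\ell \geq 2$ contributes $(-1)^\ell$ times the product of the entries $\y_{i_1 i_2}\cdots \y_{i_\ell i_1} = (-1)^\ell$, and carries the sign $\mathrm{sgn}(\sigma)$ of the cycle, which for an $\ell$-cycle is $(-1)^{\ell-1}$; so each nontrivial cycle contributes a net factor of $(-1)^{\ell}\cdot(-1)^{\ell-1} = -1$. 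The crucial point I must then establish is that these nontrivial-cycle contributions cancel \emph{exactly} against the non-acyclic functions $f$ — the functions $f\colon I \to [n]$ that \emph{do} create a directed cycle (other than loops) and are therefore excluded from the numerator of $Y_I$.

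The heart of the argument, and the step I expect to be the main obstacle, is this cancellation/sign-matching: I need a sign-reversing involution (or an inclusion–exclusion identity) showing that the terms of $\det(\Y_I)$ coming from permutations with at least one nontrivial cycle precisely account for the ``missing'' functions $f$, namely those $f$ whose graph contains a nonloop directed cycle. Concretely, I would group the permutation expansion by which nonloop cycles appear, and interpret the sign $-1$ per cycle as the inclusion–exclusion coefficient that subtracts off functions $f$ containing that cycle. The acyclic functions are exactly those surviving all the cancellations. I would likely organize this as an induction on $|I|$ (matching the inductive structure used in Lemma~\ref{lem:irr}), or alternatively invoke a matrix-tree / permanent-versus-determinant style identity, reformulating the claim as: clearing denominators, $\prod_{i\in I} X_i \cdot \det(\Y_I)$ equals $\sum_{\text{acyclic } f} \prod_{i\in I}\X_{f(i)}$.

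Two consistency checks would anchor the computation. The leading term $\prod_{i \in I} Y_i$ from the identity permutation supplies the fully-fixed function $f(i)=i$ for all $i$, whose monomial $\prod_{i\in I} A_i/\prod_{i \in I} X_i$ is exactly the distinguished monomial singled out at the end of Lemma~\ref{lem:irr}; this confirms the normalization and the overall sign. Second, Lemma~\ref{lem:Yfac} shows $Y_I$ is multiplicative over strongly connected components, and $\det$ is block-triangular (up to reindexing) with respect to the strong-component decomposition — so both sides factor compatibly, letting me reduce, if convenient, to the strongly connected case where the only fixed-point-free permutations come from genuine cycles spanning subsets of $I$.
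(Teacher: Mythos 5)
Your proposal is correct and follows essentially the same route as the paper: expand $\det(\Y_I)$ over permutations, observe that each nontrivial $\ell$-cycle contributes a net factor $(-1)^{\ell}\cdot(-1)^{\ell-1}=-1$ (the paper phrases this as $(-1)^{\#\text{odd cycles}}\cdot(-1)^{\#\text{even cycles}}=(-1)^{\#\text{cycles}}$), and read the resulting signed sum over collections of disjoint cycles as the inclusion--exclusion that removes exactly the functions containing a nonloop cycle. The cancellation you flag as the ``main obstacle'' is precisely this standard inclusion--exclusion (note the weight of the edges along a cycle is $\prod X_{f(i)}/X_i=1$, so each cycle really does carry only the coefficient $-1$), so no further involution or induction is needed.
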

\begin{proof}
 By the inclusion-exclusion principle, to count the acyclic functions from the set $I$ (with weight assigned as above) we need to take the alternating sum over all collections of disjoint cycles in $I$ of functions containing those cycles. In other words,
$$Y_I = \sum_{\text{collections $\C$ of disjoint cycles}} (-1)^{\text{number of cycles in $\C$}} \; Y_{I - \C}.$$ 
This is exactly the expansion of the determinant of the minor $\Y_I$. Indeed, the off-diagonal entries give a collection of disjoint cycles in $\Gamma$. Furthermore, the sign of a term in the determinant is $(-1)^{\text{number of even cycles}}$, while the sign obtained from having $-1$ entries off diagonal in $\Y$ is $(-1)^{\text{number of odd cycles}}$. Together, those give us the desired sign $(-1)^{\text{number of cycles}}$.
\end{proof}

\begin{example}
For the graph of Example \ref{ex:first}, we have 
$$
\Y =  \left(\begin{matrix}
\frac{A_1 + X_2 + X_3 + X_4}{X_1} &  -1  & -1 & -1\\
-1  &  \frac{A_2+X_1+X_3}{X_2} & -1 & 0\\
-1 & -1 & \frac{A_3+X_1+X_2+X_4}{X_3} & -1\\
0  &   -1       & 0  & \frac{A_4 + X_2}{X_4}
\end{matrix}\right).
$$
It is easy to check that the variable $Y_{124}$ in the example above is equal to the principal minor of $\Y$ with row and column set $\{1,2,4\}$.
\end{example}

\subsection{Exchange relations}

We describe what will turn out to be the exchange relations in the LP algebra associated with $\Gamma$. Let us use the notation $p: i \longrightarrow_S j$ to denote simple (vertex non-repeating) paths $p$ from vertex $i$ to vertex $j$ such that all intermediate vertices along the path belong to the set $S$. The $i$ and $j$ vertices may or may not belong to $S$. Define
$$P_S^{i,j} = \sum_{p: i \longrightarrow_S j} Y_{S - p}.$$ Note that for $j=i$ the only simple path from $i$ to $i$ is trivial path, and thus $P_S^{i,i} = Y_S$ for $i \not \in S$ and $P_S^{i,i} = Y_{S - \{i\}}$ for $i \in S$.

\begin{lemma} \label{lem:maxmut}
For $i \not \in S$ we have 
$$X_i Y_{S \oplus i} = \frac{\sum_{j \not \in S i} P_S^{i,j} X_j + \sum_{j \in Si} P_S^{i,j} A_j}{Y_{S \ominus i}}.$$
\end{lemma}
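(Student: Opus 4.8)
The plan is to interpret both sides as generating functions for acyclic functions on the set $Si$ and match them term by term, using the determinantal description of the $Y$'s from Proposition \ref{prop:det} together with the factorization in Lemma \ref{lem:YiS}. First I would clear denominators: since $Y_{S\oplus i}Y_{S\ominus i}=Y_{Si}$ by Lemma \ref{lem:YiS}, the claimed identity is equivalent to
\begin{equation}\label{eq:plan-cleared}
X_i\,Y_{Si} = \sum_{j\notin Si} P_S^{i,j}X_j + \sum_{j\in Si}P_S^{i,j}A_j.
\end{equation}
This is now an identity purely among the $Y$-Laurent polynomials, and the goal is to prove \eqref{eq:plan-cleared} directly. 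The natural object is the generating function $X_i Y_{Si}$: by definition $Y_{Si}$ counts acyclic functions $f\colon Si\to[n]$, and I would analyze such functions by tracking the component of $i$. Every acyclic $f$ on $Si$ determines a directed trajectory starting at $i$, namely $i\to f(i)\to f(f(i))\to\cdots$; because $f$ is acyclic (no nontrivial cycles), this trajectory is a simple path that must eventually leave $S$ or terminate at a vertex where $f$ acts as a loop (contributing an $A_j$) or exits to a vertex $j\notin Si$ (contributing an $X_j$).

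The key step is to set up the bijection underlying this decomposition. Given an acyclic function $f$ on $Si$, follow the path $p$ from $i$ along the edges prescribed by $f$ until it first either reaches a vertex $j$ with $f(j)=j$ (a loop, weight $A_j$), or takes an edge $j\to f(j)$ with $f(j)\notin Si$ (weight $X_{f(j)}$), or—the remaining case—reaches a vertex $j\in Si$ already visited, which acyclicity forbids unless it is the loop case. Removing the vertices of this path $p$ from $Si$ leaves a set on which $f$ restricts to an acyclic function, contributing $Y_{Si - p}$; note $Si - p = S - p$ since $i$ lies on $p$, matching the definition of $P_S^{i,j}$. Summing over the terminal vertex $j$ and over all simple paths $p\colon i\longrightarrow_S j$ reproduces exactly the right-hand side of \eqref{eq:plan-cleared}, where the extra factor $X_i$ on the left accounts for the denominator $X_i$ built into $Y_{Si}$ that is not cancelled when $i$ is the start of the path. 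I would present this as a weight-preserving bijection between monomials on the two sides.

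The main obstacle I expect is bookkeeping the denominators and the ``first exit'' condition cleanly, so that the decomposition is genuinely a bijection rather than an overcount. Concretely, I must check that the path $p$ is uniquely recoverable from $f$ (the ``first time we leave $S$ or hit a loop'' rule does this), that the residual function on $Si-p$ is exactly an arbitrary acyclic function with no further constraints, and that the normalizing monomials $\prod X_k$ in the various $Y$-denominators telescope correctly—this is where the $X_i$ prefactor on the left and the split of the sum into $j\notin Si$ (weight $X_j$) versus $j\in Si$ (weight $A_j$) gets pinned down. An alternative, possibly cleaner, route is to prove \eqref{eq:plan-cleared} via the matrix $\Y$ of Proposition \ref{prop:det}: expand $\det(\Y_{Si})$ by the row indexed by $i$ and identify the cofactor sums with the path sums $P_S^{i,j}$, using that $\y_{ij}=-1$ along edges and $\y_{ii}=Y_i$. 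If the determinantal approach is used, the path sums $P_S^{i,j}$ should emerge as (signed) minors via a Lindström–Gessel–Viennot or Cramer-type argument, and the combinatorial bijection above would then serve mainly as the conceptual explanation for why the cofactors package into simple-path generating functions.
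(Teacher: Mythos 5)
Your proposal is correct and follows the paper's own argument: reduce to $X_i Y_{Si} = \sum_{j\notin Si}P_S^{i,j}X_j + \sum_{j\in Si}P_S^{i,j}A_j$ via Lemma \ref{lem:YiS}, then decompose each acyclic function on $Si$ by following the arrows from $i$ until the trajectory leaves $S$ or terminates in a loop, grouping by the resulting simple path and its endpoint. The determinantal alternative you mention is not needed; the bijective bookkeeping you outline is exactly what the paper does.
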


\begin{proof}
By Lemma \ref{lem:YiS} the desired equality is equivalent to $$X_i Y_{Si} = \sum_{j \not \in Si} P_S^{i,j} X_j + \sum_{j \in Si} P_S^{i,j} A_j.$$ For any acyclic function counted in the numerator of $Y_{S i}$, just follow the arrows from $i$ until this path either leaves $S$ or end with a loop in $S$. Group acyclic functions by the endpoints $j$ of the resulting paths $p$.
\end{proof}

\begin{lemma} \label{lem:Fpoly}
 The expression 
$$\frac{\sum_{j \not \in Si} P_S^{i,j} X_j + \sum_{j \in Si} P_S^{i,j} A_j}{Y_{S \ominus i}}$$
can be written as a polynomial in the $X_j$, $A_j$ and $Y_T$'s for $T \subset S$.
\end{lemma}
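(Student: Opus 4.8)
The plan is to observe that Lemma \ref{lem:maxmut} has already \emph{evaluated} the expression in question: its right-hand side is exactly the quotient appearing in Lemma \ref{lem:Fpoly}, and it equals $X_i\,Y_{S\oplus i}$. So the entire content of the statement is the assertion that $X_i\,Y_{S\oplus i}$ is a polynomial in the $X_j$, the $A_j$, and the $Y_T$ with $T\subset S$. The first thing I would record is that the indexing is consistent: writing $C := S\oplus i$, the set $C$ is strongly connected, contains $i$, and satisfies $C\setminus\{i\}\subseteq Si\setminus\{i\}=S$, so every $Y_T$ with $T\subseteq C\setminus\{i\}$ is among the allowed variables. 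Thus it suffices to prove the self-contained claim: for any strongly connected $C$ and any $i\in C$, the product $X_i\,Y_C$ is a polynomial in the $X_j$, the $A_j$, and the $Y_{\{v\}}$ with $v\in C\setminus\{i\}$. This reduction is the conceptual heart of the argument, since it eliminates the path bookkeeping $P_S^{i,j}$, the quotient by $Y_{S\ominus i}$, and all unreachable or downstream vertices at once.

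To prove the claim I would invoke the determinantal description $Y_C=\det(\Y_C)$ from Proposition \ref{prop:det}. The key structural observation is that the variable $Y_i$ occurs in $\Y_C$ only in the diagonal entry $(i,i)$; every other entry is either some $Y_v$ with $v\in C\setminus\{i\}$ or one of the constants $0,-1$. Hence $\det(\Y_C)$ is affine-linear in $Y_i$, and I can write
\[
Y_C \;=\; Y_i\,P_1 + P_2,
\]
where $P_1$ is the $(i,i)$-cofactor and $P_2=\det(\Y_C)\big|_{Y_i=0}$. By construction both $P_1$ and $P_2$ are integer-coefficient polynomials in the entries $\{Y_v : v\in C\setminus\{i\}\}$ and the constants $0,-1$; in particular each is already a polynomial in the allowed variables. (If desired one may further identify $P_1=Y_{C\setminus\{i\}}$ using Proposition \ref{prop:det} and Lemma \ref{lem:Yfac}, but this is not needed for polynomiality.)

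The final step multiplies through by $X_i$ and uses the defining relation $X_iY_i=A_i+\sum_{i\to k}X_k$ for the singleton $Y_i=Y_{\{i\}}$:
\[
X_i\,Y_C \;=\; (X_iY_i)\,P_1 + X_i\,P_2 \;=\; \Bigl(A_i+\sum_{i\to k}X_k\Bigr)P_1 + X_i\,P_2 .
\]
In the first summand every factor is an allowed variable, and in the second summand $X_i$ is an allowed variable while $P_2$ is a polynomial in the allowed variables, so the whole expression is a polynomial in the allowed variables. This proves the claim, and hence the lemma.

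I would expect the only genuinely delicate point to be the treatment of the second summand $X_i\,P_2$: a priori the leftover factor $X_i$ has no companion $Y_i$ to absorb it via the exchange relation, and one might worry that a stray $X_i$ spoils polynomiality. This is precisely where it matters that $Y_i$ sits only on the diagonal of $\Y_C$, so that $P_2=\det(\Y_C)|_{Y_i=0}$ is independent of $X_i$ (indeed a polynomial in the remaining $Y_v$'s alone); multiplication by the single allowed variable $X_i$ is then harmless.
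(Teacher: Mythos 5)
Your proof is correct, but it takes a genuinely different route from the paper's after the (shared) first step of identifying the expression with $X_i Y_{S\oplus i}$ via Lemma \ref{lem:maxmut}. The paper's argument is a one-liner: since $X_iY_{S\oplus i}$ depends only on $S\oplus i$, one simply re-applies Lemma \ref{lem:maxmut} with $S$ replaced by $(S\oplus i)\setminus\{i\}$, for which $S\ominus i=\emptyset$; the denominator disappears and the numerator $\sum_j P^{i,j}X_j+\sum_j P^{i,j}A_j$ is already visibly a polynomial in the allowed variables. You instead pass through the determinantal formula of Proposition \ref{prop:det}, expand $\det(\Y_{S\oplus i})$ in the $(i,i)$ entry, and clear the lone $Y_i$ against $X_i$ using $X_iY_i=A_i+\sum_{i\to k}X_k$; all of your steps check out (in particular $P_1=Y_{C\setminus\{i\}}$ and $P_2=\det(\Y_C)|_{Y_i=0}$ are indeed integer polynomials in the $Y_v$, $v\in C\setminus\{i\}\subseteq S$). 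What each approach buys: the paper's stays entirely inside the path-polynomial formalism and yields an expression not involving $X_i$; yours uses the determinant machinery but gives the sharper conclusion that only singleton variables $Y_{\{v\}}$ are needed. The residual term $X_iP_2$ in your formula is harmless both for the literal statement (which allows all $X_j$) and for the downstream use via Lemma \ref{lem:monden} in Proposition \ref{prop:rigid}, since $X_i$ is itself a cluster variable.
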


\begin{proof}
 Since it is equal to $X_i Y_{S \oplus i}$, it depends only on $S \oplus i$ and not $S$ itself. Thus we can compute it taking Lemma \ref{lem:maxmut} with $S := S \oplus i$, in which case $S \ominus i = \emptyset$ so that there is no denominator.
\end{proof}

\begin{example}
Continuing Example \ref{ex:Y}, Lemma \ref{lem:maxmut} gives for $S = \{1,2\}$, $i=4$:
$$X_4 Y_{124} = \frac{X_3(Y_1+1)+A_4Y_{12}+A_1+A_2 Y_1}{1},$$
and for $S =\{3,4\}$ and $i=1$:
$$X_1 Y_{13} = \frac{X_2(Y_{3}Y_{4}+Y_3+Y_4+1)+A_1 Y_3 Y_4 +A_4(Y_3+1)+A_3 Y_4}{Y_4}.$$

This last expression can be rewritten
$$\frac{X_2(Y_{3}Y_{4}+Y_3+Y_4+1)+A_1 Y_3 Y_4 +A_4(Y_3+1)+A_3 Y_4}{Y_4} = \frac{X_2(Y_3+1)+X_4(Y_3+1)+A_1Y_3+A_3}{1}.$$

\end{example}

\begin{lemma} \label{lem:det}
For $i \neq j$, and a set $S$ we have $$P_S^{i,j} = (-1)^{1+\text{number of elements of $S$ between $i$ and $j$}} \det(\Y_{S'i, S'j}),$$ where $S'=S -\{i,j\}$ and $\Y_{I,J}$ denotes the minor of $\Y$ with row indices $I$ and column indices $J$.
\end{lemma}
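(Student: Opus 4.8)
The plan is to prove the identity by expanding the minor $\det(\Y_{S'i,S'j})$ combinatorially and matching it term-by-term against the path sum defining $P_S^{i,j}$. First I would write $\det(\Y_{S'i,S'j}) = \sum_\sigma \epsilon(\sigma)\prod_{r\in S'i}\y_{r,\sigma(r)}$, the sum over bijections $\sigma\colon S'i\to S'j$ with $\epsilon(\sigma)$ the sign taken relative to the increasing orderings of the two index sets. Since $i\in S'i\setminus S'j$ is a source and $j\in S'j\setminus S'i$ is a sink for the functional digraph $r\mapsto\sigma(r)$, while every element of $S'=S-\{i,j\}$ carries both an out-arc and an in-arc, each nonvanishing term decomposes uniquely into a single simple path $p\colon i\longrightarrow_S j$ together with a permutation $\rho$ of the leftover set $U:=S-p$ acting by disjoint cycles and fixed points. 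The path contributes $\prod\y = (-1)^m$, where $m$ is the number of edges of $p$ (each an off-diagonal $-1$ entry, since a path between distinct endpoints uses no diagonal step), and summing the leftover part over all $\rho$ reconstitutes $\det(\Y_U)=Y_{S-p}$ by Proposition \ref{prop:det}.

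The heart of the argument is the sign bookkeeping. I would first factor $\epsilon(\sigma)=\epsilon(\sigma_0)\cdot\mathrm{sign}(\rho)$, where $\sigma_0$ is the reference bijection that shifts the path $v_0\to v_1\to\cdots\to v_m$ (with $v_0=i$, $v_m=j$) and fixes $U$ pointwise; this holds because $\sigma\sigma_0^{-1}$ is a permutation of the column set $S'j$ that equals $\rho$ on $U$ and the identity on the path columns. With this factorization the leftover cycles reassemble with their correct determinantal signs into $Y_{S-p}$, giving
\begin{equation*}
\det(\Y_{S'i,S'j}) = \sum_{p\colon i\longrightarrow_S j} \epsilon(\sigma_0)\,(-1)^m\, Y_{S-p}.
\end{equation*}
It then remains to show that the scalar $\epsilon(\sigma_0)(-1)^m$ is the same for every path and equals $(-1)^{1+b}$, where $b$ is the number of elements of $S$ strictly between $i$ and $j$.

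To compute $\epsilon(\sigma_0)$ I would extend $\sigma_0$ to the $(m+1)$-cycle $\hat\sigma=(v_0\,v_1\,\cdots\,v_m)$ acting on $T:=S\cup\{i,j\}$ (identity on $U$), which has sign $(-1)^m$. Deleting the single arc $j\mapsto i$ recovers $\sigma_0$ as a bijection $T-j\to T-i$, so the standard cofactor-sign identity gives $\mathrm{sign}(\hat\sigma)=(-1)^{\mathrm{pos}_T(i)+\mathrm{pos}_T(j)}\,\epsilon(\sigma_0)$, whence $\epsilon(\sigma_0)(-1)^m=(-1)^{\mathrm{pos}_T(i)+\mathrm{pos}_T(j)}$; crucially this no longer depends on the internal structure of $p$. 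A direct parity count of the positions of $i$ and $j$ inside the ordered set $T$ — using that the elements of $T$ strictly between $i$ and $j$ are exactly the $b$ elements of $S$ between them — reduces $\mathrm{pos}_T(i)+\mathrm{pos}_T(j)$ modulo $2$ to $1+b$, yielding the claimed sign; summing over $p$ and using that $(-1)^{1+b}$ is its own inverse then gives $P_S^{i,j}=(-1)^{1+b}\det(\Y_{S'i,S'j})$. The main obstacle is precisely this sign computation: keeping the three sources of sign consistent — the ambient bijection sign $\epsilon(\sigma_0)$, the $(-1)^m$ from the path edges, and the determinantal signs hidden inside $Y_{S-p}$ — and verifying that their product collapses to the clean position-parity $(-1)^{1+b}$. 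The combinatorial decomposition of a term into a path plus cycles is comparatively routine once Proposition \ref{prop:det} is in hand.
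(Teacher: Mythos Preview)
Your argument is correct and follows essentially the same route as the paper: decompose each nonvanishing term of the minor into a simple path $i\to_S j$ together with cycles on the complement, recognize the cycle sum as $Y_{S-p}$ via Proposition~\ref{prop:det}, and verify that the path contributes a uniform sign $(-1)^{1+b}$. The only real difference is in how the sign is packaged: the paper first swaps column $j$ into the $i$-th position (absorbing $(-1)^b$ immediately) so that the path becomes an honest cycle in a square matrix, whose permutation sign $(-1)^{m-1}$ times the $(-1)^m$ from the off-diagonal entries gives the residual $-1$; you instead keep the bijection $S'i\to S'j$ intact and extract the same sign via the cofactor identity $\mathrm{sign}(\hat\sigma)=(-1)^{\mathrm{pos}_T(i)+\mathrm{pos}_T(j)}\epsilon(\sigma_0)$. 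These are two phrasings of the same computation.
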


\begin{proof}
 Swap columns of $\Y_{Si, Sj}$ so that column $j$ moves to the $i$-th position. This creates a sign $(-1)^{\text{number of elements of $S$ between $i$ and $j$}}$. In the expansion of the resulting determinant there is one cycle which corresponds to a path from $i$ to $j$ in $S$. Group terms according to this path. According to Proposition \ref{prop:det} we get exactly the summation in the definition of $P_S^{i,j}$, except we need to check that the signs work out correctly. The terms corresponding to a specific path $p$ from $i$ to $j$ receive minus sign from the definition of the determinant in case the path has even length. They also receive minus sign from $-1$ entries of $\Y$ in case $p$ has odd length. Thus, all terms have the same minus sign, which contributes $1$ in the exponent of $(-1)^{1+\text{number of elements of $S$ between $i$ and $j$}}$.
\end{proof}

Recall that the {\it Jacobi identity}, also known as {\it Dodgson's condensation} states that for a square $n \times n$ matrix $M$ we have
\begin{equation}\label{eq:Jacobi}
\det(M) \, \det(M_{[2,n-1],[2,n-1]}) = \det(M_{[1,n-1],[1,n-1]})\,\det(M_{[2,n],[2,n]}) -\det(M_{[1,n-1],[2,n]}) \, \det(M_{[2,n],[1,n-1]}).
\end{equation}

\begin{lemma} \label{lem:Yex}
For $i, j \not \in S$ with $i \neq j$ we have 
$$Y_{S \oplus i} Y_{S \oplus j} = \frac{Y_{S ij} Y_{S} + P_S^{i,j} P_S^{j,i}}{Y_{S \ominus i} Y_{S \ominus j}}.$$
\end{lemma}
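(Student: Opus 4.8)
The plan is to read off this identity from the Jacobi identity / Dodgson condensation \eqref{eq:Jacobi}, applied to the matrix $\Y_{Sij}$, and then to translate each resulting minor back into the $Y$'s and $P$'s using Proposition \ref{prop:det} and Lemma \ref{lem:det}. Concretely, I would form the $(|S|+2)\times(|S|+2)$ matrix $M$ obtained from $\Y$ by taking the rows and columns indexed by $Sij$, listed in the order ``$i$ first, then the elements of $S$, then $j$.'' Since reordering rows and columns by the same permutation preserves determinants of the full matrix and of principal minors, the four ``symmetric'' terms in \eqref{eq:Jacobi} are immediate: $\det(M)=Y_{Sij}$, the central principal minor $\det(M_{[2,n-1],[2,n-1]})=Y_S$, and the two corner principal minors $\det(M_{[1,n-1],[1,n-1]})=Y_{Si}$ and $\det(M_{[2,n],[2,n]})=Y_{Sj}$, all via Proposition \ref{prop:det}. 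The two off-diagonal minors $\det(M_{[1,n-1],[2,n]})$ and $\det(M_{[2,n],[1,n-1]})$ are, up to the reordering signs, the minors $\det(\Y_{Si,Sj})$ and $\det(\Y_{Sj,Si})$. Thus \eqref{eq:Jacobi} yields
$$Y_{Sij}\,Y_S = Y_{Si}\,Y_{Sj} - \det(\Y_{Si,Sj})\,\det(\Y_{Sj,Si}).$$

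Next I would bring in Lemma \ref{lem:det}. Because $i,j\notin S$ we have $S-\{i,j\}=S$, so $P_S^{i,j}=(-1)^{1+c}\det(\Y_{Si,Sj})$ and $P_S^{j,i}=(-1)^{1+c'}\det(\Y_{Sj,Si})$, where $c,c'$ count the elements of $S$ lying between $i$ and $j$. The crucial observation is that this count is unchanged when the roles of $i$ and $j$ are swapped, so $c=c'$ and the two signs multiply to $(-1)^{2+2c}=1$; hence $P_S^{i,j}\,P_S^{j,i}=\det(\Y_{Si,Sj})\,\det(\Y_{Sj,Si})$. Substituting into the displayed identity and rearranging gives
$$Y_{Si}\,Y_{Sj} = Y_{Sij}\,Y_S + P_S^{i,j}\,P_S^{j,i}.$$

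Finally I would expand the left-hand side with Lemma \ref{lem:YiS}, writing $Y_{Si}=Y_{S\oplus i}\,Y_{S\ominus i}$ and $Y_{Sj}=Y_{S\oplus j}\,Y_{S\ominus j}$, and then divide through by $Y_{S\ominus i}\,Y_{S\ominus j}$ to recover the asserted formula.

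The only genuinely delicate point is the sign bookkeeping around the off-diagonal minors, and it should be addressed in two places that both turn out to be harmless. First, reindexing $Sij$ to place $i$ first and $j$ last permutes the rows and columns of the two off-diagonal submatrices; I would check that these reordering signs are identical for $M_{[1,n-1],[2,n]}$ and for $M_{[2,n],[1,n-1]}$ (they are, by the symmetry exchanging $i$ and $j$), so they cancel in the product and leave $\det(\Y_{Si,Sj})\det(\Y_{Sj,Si})$ as written. Second, the sign $(-1)^{1+c}$ of Lemma \ref{lem:det} enters the two factors $P_S^{i,j}$ and $P_S^{j,i}$ identically and therefore also squares away. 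Because every sign that appears does so an even number of times, the minus sign in \eqref{eq:Jacobi} is correctly absorbed into the positive term $P_S^{i,j}P_S^{j,i}$, and no further case analysis on the relative order of $i$ and $j$ is needed.
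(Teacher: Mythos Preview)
Your proof is correct and follows exactly the route the paper takes: reduce to $Y_{Si}\,Y_{Sj} = Y_{Sij}\,Y_S + P_S^{i,j}\,P_S^{j,i}$ via Lemma~\ref{lem:YiS}, and then read this off from the Jacobi identity~\eqref{eq:Jacobi} using Proposition~\ref{prop:det} and Lemma~\ref{lem:det}. The only difference is that you spell out the sign bookkeeping for the off-diagonal minors (the reordering signs and the $(-1)^{1+c}$ from Lemma~\ref{lem:det} each appear twice and cancel), whereas the paper leaves that implicit.
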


\begin{proof}
By Lemma \ref{lem:YiS}, the equality is equivalent to 
$$Y_{Si} Y_{Sj} = Y_{Sij} Y_{S} + P_S^{i,j} P_S^{j,i}.$$
According to Lemma \ref{lem:det}, this is just the identity \eqref{eq:Jacobi}.
\end{proof}

\begin{example}
With the graph in Example \ref{ex:first}, we have
$$P_{\{3,4\}}^{1,2} = Y_{34}+Y_3+Y_4+1 = 
- \det \left(\begin{matrix}
  -1  & -1 & -1\\
 -1 & Y_3 & -1\\
   -1       & 0  & Y_4
\end{matrix}\right)
$$
and $P_{\{3,4\}}^{2,1} = Y_{34}+Y_4$.  For $S = \{3,4\}$, $i=1$ and $j=2$, Lemma \ref{lem:Yex} gives
$$Y_{13} Y_{234}= \frac{Y_{1234} Y_{34} + (Y_{34}+Y_3+Y_4+1)(Y_{34}+Y_4)}{Y_4}.$$
\end{example}

\begin{lemma} \label{lem:ikj}
Let $i,k \notin S$, and $j$ be arbitrary such that $i \neq k$ and $j \neq k$.  Then
$$P_{Sk}^{i,j} Y_S = \begin{cases} P_S^{i,k} P_{S}^{k,j} + P_S^{i,j} Y_{Sk} & \mbox{if $i \neq j$} \\
P_S^{i,j} Y_{Sk} & \mbox{if $i = j$.}
\end{cases}
$$
\end{lemma}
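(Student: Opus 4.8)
The plan is to express each $P$-quantity as a minor of $\Y$ via Lemma \ref{lem:det}, reducing the claimed identities to a determinantal identity among minors, and then to apply the Jacobi/Dodgson condensation identity \eqref{eq:Jacobi} in the same way as in Lemma \ref{lem:Yex}. The key observation is that $Sk$ plays the role of the full index set, so $P_{Sk}^{i,j}$ becomes a minor of the matrix $\Y$ indexed by $(Sk)'i$ and $(Sk)'j$ where $(Sk)' = Sk - \{i,j\}$, while the right-hand side quantities $P_S^{i,k}$, $P_S^{k,j}$, $P_S^{i,j}$ are minors with index sets that omit $k$, and $Y_S$, $Y_{Sk}$ are principal minors by Proposition \ref{prop:det}. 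The case split on $i=j$ versus $i\neq j$ reflects whether the defining expansion $P_{Sk}^{i,j}$ has a nontrivial path through $k$ or collapses; in the $i=j$ case $P_{Sk}^{i,i}=Y_{(Sk)-\{i\}}=Y_{Sk-i}$ and the identity is a principal-minor statement, whereas for $i\neq j$ the intermediate vertex $k$ genuinely contributes.

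First I would reduce to the $i\neq j$ case, which is the substantive one. Writing everything through Lemma \ref{lem:det}, the claim $P_{Sk}^{i,j}Y_S = P_S^{i,k}P_S^{k,j} + P_S^{i,j}Y_{Sk}$ should become, after clearing the sign prefactors, an instance of condensation applied to the matrix $\Y$ restricted to rows and columns $S\cup\{i,j,k\}$, where the distinguished ``border'' rows and columns are $i$, $j$, and $k$. Concretely, I expect that the product $P_{Sk}^{i,j}Y_S$ corresponds to $\det(M)\det(M_{\text{core}})$ on the left of \eqref{eq:Jacobi}, while $P_S^{i,j}Y_{Sk}$ and $P_S^{i,k}P_S^{k,j}$ are the two terms on the right (one a product of two ``straight'' minors, the other a product of two ``crossed'' minors that each route through $k$). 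For the $i=j$ case, $P_{Sk}^{i,i}=Y_{Sk-i}$ and $P_S^{i,i}=Y_{S-i}$ (or $Y_S$, depending on membership, but here $i\notin S$ so $P_S^{i,i}=Y_S$ and $P_{Sk}^{i,i}=Y_{Sk-i}=Y_{S}$ contributions must be tracked carefully), and the identity should reduce to the factorization $Y_{Sk}=Y_{S\oplus k}Y_{S\ominus k}$ together with Proposition \ref{prop:det}, or simply to $Y_S Y_{Sk}=Y_S Y_{Sk}$ after the vanishing of the crossed term.

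The main obstacle I anticipate is bookkeeping the signs coming from Lemma \ref{lem:det}. Each $P$-term carries a factor $(-1)^{1+(\text{elements between the two endpoints})}$, and the counts of elements ``between'' $i$ and $j$ in $Sk$ versus between $i,k$ and $k,j$ in $S$ differ depending on where $k$ sits in the linear order relative to $i$ and $j$. I would handle this by fixing an explicit linear order on $S\cup\{i,j,k\}$, placing $k$ in a convenient position (e.g. adjacent to $j$), and matching the sign produced by the column-swap in Lemma \ref{lem:det} against the $\det(M_{[1,n-1],[2,n]})\det(M_{[2,n],[1,n-1]})$ sign in \eqref{eq:Jacobi}; the interior-vertex count difference should telescope against the condensation signs so that all three terms acquire a consistent sign, exactly as occurred in the proof of Lemma \ref{lem:Yex}. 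Once the signs are pinned down, the identity is literally \eqref{eq:Jacobi}, and the $i=j$ case follows by specialization or by the degenerate form where the crossed product vanishes.
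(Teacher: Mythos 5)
Your main line of attack for the substantive case ($i\neq j$ with $j\notin S$) is exactly the paper's: translate every $P$-term into a minor of $\Y$ via Lemma \ref{lem:det} and recognize the identity as Dodgson condensation \eqref{eq:Jacobi}. One imprecision: the matrix to which \eqref{eq:Jacobi} is applied is not the principal submatrix of $\Y$ on $S\cup\{i,j,k\}$ but the (generally non-principal) square matrix $\Y_{Sik,Sjk}$, with rows ordered $(i,S,k)$ and columns ordered $(j,S,k)$; then $\det(M)=\pm P^{i,j}_{Sk}$, the core is $\Y_{S,S}$, and the four corner minors are $\pm P^{i,j}_S$, $Y_{Sk}$, $\pm P^{i,k}_S$, $\pm P^{k,j}_S$. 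Your description of which minor plays which role is consistent with this, so I read it as the right argument stated slightly loosely. Your $i=j$ case, once untangled, also reduces to the paper's one-line observation that $P^{i,i}_{Sk}=Y_{Sk}$ and $P^{i,i}_S=Y_S$ (since $i\notin Sk$), making the identity trivial.

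The genuine gap is the case $j\in S$, which the lemma explicitly permits ($j$ is ``arbitrary'') and which is actually used later in the paper (the exchange Laurent polynomials sum $P^{i,j}_S A_j$ over $j\in Si$). When $j\in S$, Lemma \ref{lem:det} replaces $S$ by $S'=S-\{j\}$ in the minor for $P^{i,j}_S$, while $Y_S$ and $Y_{Sk}$ remain principal minors that \emph{do} contain row and column $j$. Consequently $\Y_{S,S}$ is no longer a submatrix of $\Y_{S'ki,\,S'kj}$, the six determinants in the identity no longer sit inside a single bordered square matrix, and \eqref{eq:Jacobi} cannot be invoked directly. The paper closes this case with a separate trick: adjoin a virtual vertex $j'$ outside $S$ with a single edge $j\to j'$, so that paths $i\longrightarrow_S j$ biject with paths $i\longrightarrow_S j'$ and the identity for $(i,k,j)$ becomes the already-proved identity for $(i,k,j')$ with $j'\notin S$. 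Your plan needs this (or an equivalent) reduction; as written it would stall when $j\in S$.
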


\begin{proof}
When $i = j$, the claim follows from $P^{i,j}_{Sk} = Y_{Sk}$ and $P^{i,j}_S = Y_S$. When $i \neq j$ and $j \notin S$, this is a consequence of \eqref{eq:Jacobi} and Lemma \ref{lem:det} applied to the matrix $\Y_{Sik,Sjk}$.  One easily verifies that the sign works out correctly in all cases of relative positions of $i$, $j$ and $k$.
For $j \in S$, we can argue as follows: create a new ``virtual'' vertex $j'$ outside $S$, with a single edge from $j$ to $j'$.  Then the claimed identity is the same as that for $i,k,j'$.
\end{proof}

\subsection{Algebraic dependencies among the $Y$ variables}

Lemma \ref{lem:Yex} has the following consequence. 

\begin{proposition} \label{prop:Yrat}
Let $\s$ be any maximal nested collection with support $S \subset [n]$.  Any $Y_I$ for $I \subseteq S$ is a rational function (with integer coefficients) in the $Y_{S_i}$, $S_i \in \s$.
\end{proposition}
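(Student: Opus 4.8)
The plan is to prove this by induction on the support $S$, building up $\s$ from the empty collection via the activation sequences developed in Section \ref{sec:nestmutations}, and to show at each activation step that the new $Y$-variable introduced is rationally expressible in terms of the ones already present. The base case is vacuous: when $S = \emptyset$ there are no $Y_I$ to express. For the inductive step, suppose $\s$ is obtained from a maximal nested collection $\s'$ on $S - \{s\}$ by activation at $s$, so that $\s = \s' \cup \{S' \oplus s\}$ where $S' = S - \{s\}$ is the support of $\s'$. By the inductive hypothesis, every $Y_I$ for $I \subseteq S'$ is rational in the $Y_{S'_i}$ with $S'_i \in \s'$; since all of these $S'_i$ remain in $\s$, it suffices to show that the one genuinely new cluster variable $Y_{S' \oplus s}$ can be added to the generating set without loss, and more to the point that every $Y_I$ with $I \subseteq S$ and $s \in I$ becomes rational in the $Y_{S_i}$, $S_i \in \s$.

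The engine driving the inductive step should be Lemma \ref{lem:Yex}, which gives the quadratic relation
$$
Y_{S \oplus i} Y_{S \oplus j} = \frac{Y_{Sij} Y_S + P_S^{i,j} P_S^{j,i}}{Y_{S \ominus i} Y_{S \ominus j}},
$$
together with Lemma \ref{lem:YiS} (giving $Y_{Si} = Y_{S \oplus i} Y_{S \ominus i}$) and Lemma \ref{lem:det} (expressing the path polynomials $P_S^{i,j}$ as minors of $\Y$). The key observation is that these relations let one solve for a $Y$-variable on a larger set in terms of $Y$-variables on smaller sets, \emph{dividing} by a $Y$-variable one already controls. Concretely, for any $I \subseteq S$ containing $s$, I would write $I = I' \oplus s$ for an appropriate subset $I'$ of $S'$ (or reduce to this case using that $I$ decomposes into strongly connected components via Lemma \ref{lem:Yfac}, and that the components not containing $s$ already lie in $S'$ and are handled by induction). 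Then Lemma \ref{lem:Yex}, applied with the set $I'$ in place of $S$ and with $s$ playing the role of one of the activated vertices, expresses $Y_{I' \oplus s}$ rationally in terms of $Y_{I'}$, $Y_{I' s j}$-type quantities, and path polynomials, all of which are supported on subsets of $S$ that are either smaller or already reachable. Iterating this reduction expresses everything in terms of the $Y_{S_i}$.

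The main obstacle will be bookkeeping the \emph{denominators}: each application of Lemma \ref{lem:Yex} introduces factors $Y_{S \ominus i}$ and $Y_{S \ominus j}$, and each application of Lemma \ref{lem:det} writes path polynomials as minors whose entries are the $Y_i = Y_{\{i\}}$ rather than the $Y_{S_i}$. One must verify that all these auxiliary $Y$-variables are themselves supported on subsets of $S$ small enough to be covered by the inductive hypothesis, so that no circularity arises and the final expression is a genuine rational function (with integer coefficients, as the relations have integer coefficients) in the $Y_{S_i}$ alone. I expect the cleanest route is to order the induction so that the target $Y_I$ is reduced to $Y$-variables on supports strictly contained in $S$ wherever possible, using the nestedness of $\s$ to guarantee that the sets $S_i \in \s$ refine in a way compatible with $\oplus$ and $\ominus$; the partial order $\preceq_\s$ from Lemma \ref{lem:nested} should provide exactly the well-founded structure needed to close the induction.
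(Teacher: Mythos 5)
Your overall frame (induction on the size of the support, with Lemma \ref{lem:Yex} as the engine and Lemmas \ref{lem:Yfac}, \ref{lem:YiS}, \ref{lem:det} handling the auxiliary factors) is reasonable, but there is a genuine gap at the one step that matters. When you apply Lemma \ref{lem:Yex} with base set $I'$ and vertices $s,j$ to extract $Y_{I'\oplus s}$, the right-hand side contains the term $Y_{I'sj}$, which is supported on a set \emph{strictly larger} than $I = I's$. So your reduction does not descend: iterating it drives the support upward toward $S$, and this term is neither covered by the induction on the smaller support $S'$ nor, as you have set things up, known in advance. Your proposed remedy --- ``order the induction so that the target $Y_I$ is reduced to $Y$-variables on supports strictly contained in $S$'' --- points in exactly the wrong direction for this term, and as written the induction does not close. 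The argument can be repaired, but only by making a second, \emph{descending} induction explicit: reduce to $S$ strongly connected via Lemma \ref{lem:Yfac}, observe that the newly activated variable is then $Y_{S'\oplus s}=Y_S$, and induct downward on $|S\setminus I|$ for $I\ni s$, choosing $j\in S\setminus I$ at each step so that $Y_{I\cup\{j\}}$ is known from the previous (larger) stage while $Y_{I'}$, $Y_{I'\oplus j}$, $Y_{I'\ominus s}$, $Y_{I'\ominus j}$ and the path polynomials all live inside $S'$ and are handled by the main induction. Without this anchor at $Y_S$ the recursion does not terminate.

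The paper closes the same difficulty by a different device, which avoids the upward recursion entirely. It keeps the support $S$ fixed and, given $I\subsetneq S$, performs \emph{internal mutations} of $\s$ (Section \ref{sec:nestmutations}) to promote some $k\notin I$ to the maximal position. Each such mutation exchanges $Y_{S_k}$ for $Y_{(S_{k^+}-\{k\})\oplus k^+}$ via Lemma \ref{lem:Yex}, but with the roles arranged so that the large-set variable appearing on the right-hand side is $Y_{S_{k^+}}$, which is already a member of $\s$, while every other $Y$ occurring is supported in $S_{k^+}-\{k^+\}$ and is covered by the induction on $|S|$; there is never any appeal to a variable on a larger, not-yet-controlled set. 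Once the maximal set of the mutated collection avoids $I$, the factorization $Y_I=\prod_j Y_{I\cap S_j}$ over the maximal sets $S_j\subsetneq S$ of the truncated collection finishes the proof by the induction hypothesis. Both routes can be made to work, but yours requires the missing descending induction to be supplied before it is a proof.
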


\begin{proof}
The proof is by induction on size of $S$. The claim is clearly true for $|S|=1$. Assume we have verified the claim for all the smaller sizes than the current one. 
It is clearly enough to consider the case when $S$ is connected, otherwise the claim follows recursively from knowing it for all connected components of $S$.  If $I = S$ the claim is clearly true, so we assume that $I \subsetneq S$.

We first argue that without loss of generality we can assume that $I$ does not contain the maximal element $j$ of $S$ in $\s$. Indeed, pick any element $k \in S - S^{\max}$ which is not in $I$.  Let $\s' = \mu_k(\s)$ be the maximal nested collection supported on $S$ obtained by exchanging $Y_{S_k}$ for $Y_{(S_{k^+} -\{k\}) \oplus k^+}$.  Apply Lemma \ref{lem:Yex} to the product $Y_{S_k}Y_{(S_{k^+} -\{k\}) \oplus k^+}$.  (In the notation of Lemma \ref{lem:Yex}, our $k$ and $k^+$ is $i$ and $j$ respectively.)  The right hand side of the exchange involves $Y_{S_{k^+}}$, which is one of the $Y_{S_i}$, $S_i \in \s$. All other $Y$-s it involves are supported on $S_{k^+}-k^+$, and thus by the induction assumption are rational functions of $Y_{S_i}$, $S_i \in \s^{k^+} \subset \s$.  We conclude that $Y_{(S_{k^+} -\{k\}) \oplus k^+}$ is a rational function in $\{Y_{S_i} \mid S_i \in \s\}$.
Thus it suffices to show $Y_I$ is a rational function of $\{Y_{S_i} \mid S_i \in \s'\}$. In $\s'$ the element $k$ is one step closer to being maximal. Proceeding in this manner, we shall arrive at a maximal nested collection on $S$ where $k \not \in I$ is maximal.

Now, since $i \not \in I$, we have $Y_I = \prod_j Y_{I \cap S_j}$, where the product is over $j$ maximal in $\s^i$. It is possible this product has just one term, but regardless of that we can now apply the induction assumption to conclude that $Y_I$ is rational function of $Y_{S_i}$, $S_i \in \s^i$.

\end{proof}

\begin{example}
 Let $S = \{1,2,3\}$ in the graph of Example \ref{ex:first}, and take $\s = \{3\},\{2,3\},\{1,2,3\}$. Then the expression for $Y_1$ in terms of this maximal nested collection is 
$$Y_1 = \frac{1+Y_3^2+Y_{23}+Y_3(2+Y_{123})}{Y_3 Y_{23}}.$$
\end{example}

\section{Graph LP algebras}
\label{sec:main}

\subsection{The LP algebra with linear initial seed}
Fix $\Gamma$ a directed, multiplicity-free, loopless graph on $[n]$.  We shall define a LP algebra with coefficient ring $R = \Z[A_1,A_2,\ldots,A_n]$, and initial cluster variables $X_1,X_2,\ldots,X_n$.  In fact the choice of the coefficient ring $R$ is not critical; the key property we need is that the $A_i$ are algebraically independent.

Let $\s \in \C$ be a maximal nested collection supported on the set $S \subseteq [n]$.  We now define a seed $t_\s = \{(X^{\s}_i,F_i^\s)\}$.  The cluster variables are defined by
\begin{equation}\label{E:cluster}
X^{\s}_i = \begin{cases}
X_i & \mbox{for $i \notin S$} \\
Y_{S_i} & \mbox{for $i \in S$.}
\end{cases}
\end{equation}
The indices, or cluster variables $X^{\s}_i$ for $i \notin S$ are called {\it external}, and for $i \in S$ are called {\it internal} or {\it activated}.  The exchange Laurent polynomials are defined by
\begin{equation}\label{E:hFi}
\hF_i^{\s} = \begin{cases}
\frac{\sum_{j \not \in Si} P_S^{i,j} X_j + \sum_{j \in Si} P_S^{i,j} A_j}{Y_{S \ominus i}} & \mbox{for $i \notin S$ or $i \in S^{\max}$} \\
\frac{Y_{Rij} Y_{R} + P_R^{i,j} P_R^{j,i}}{Y_{R \ominus i} Y_{R \ominus j}}. & \mbox{for $i \in S - S^{\max}$, $j =i^+$, and $R = S_{j}-\{i,j\}$}.
\end{cases}
\end{equation}

As is discussed in \cite{LP}, the exchange Laurent polynomials $\hF_i$ uniquely determine the exchange polynomials $F_i$.  Note that the expression for $\hF_i$ given by \eqref{E:hFi} is not in terms of cluster variables of $t_\s$, but in terms of variables in many different seeds.  In the course of our proof of Theorem \ref{thm:main}, it will be established that $\hF_i^\s$ do give legitimate exchange Laurent polynomials when written in terms of the cluster variables of $t_\s$.

Suppose $\s = \emptyset$, and thus $S = \emptyset$.  Then we have 
$$
P^{i,j}_\emptyset = \begin{cases} 1 &\mbox{there is an edge $i \to j$ in $\Gamma$ or $i = j$}\\
0 & \mbox{otherwise}.
\end{cases}
$$
Thus the seed $t_\emptyset$ consists of the cluster variables $X_i$, and the linear exchange polynomials (which are also the exchange Laurent polynomials $\hF_i$)
$$
F_i = A_i + \sum_{i \to j} X_j.
$$
We call $t_\emptyset$ the {\it initial seed}.  Our main theorem is the following:

\begin{theorem}\label{thm:main}
There is a normalized LP algebra $\A = \A_\Gamma$ with seeds exactly the $\{t_\s \mid \s \in \C\}$, and mutations given by $\mu_i(t_\s) = t_{\mu_i(\s)}$.
\end{theorem}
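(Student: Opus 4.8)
The plan is to verify that the combinatorial data $\{t_\s \mid \s \in \C\}$ together with the prescribed mutation rule $\mu_i(t_\s) = t_{\mu_i(\s)}$ genuinely assembles into a normalized LP algebra. The key point is that the structure has already been almost entirely built in the preceding sections: the cluster variables are the $X_i$ and $Y_I$, the combinatorial mutation of nested collections is understood via Lemmas \ref{lem:poset}, \ref{lem:exc}, and the internal mutation lemma, and the candidate exchange Laurent polynomials $\hF_i^\s$ are written down explicitly in \eqref{E:hFi}. What remains is to confirm that these candidates actually satisfy the definition of a seed and that algebraic mutation matches combinatorial mutation. First I would check that each $t_\s$ is a legitimate seed: that the $X_i^\s$ form a transcendence basis (clear, since they are obtained from the $X_i$ by the invertible rational substitutions of Proposition \ref{prop:Yrat}), and that the $\hF_i^\s$ of \eqref{E:hFi}, once expressed in the cluster variables of $t_\s$ via Proposition \ref{prop:Yrat}, yield exchange \emph{polynomials} $F_i^\s$ satisfying (LP1) and (LP2). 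Lemma \ref{lem:Fpoly} already handles the polynomiality in the external/maximal case, and the irreducibility and coprimality in Lemma \ref{lem:irr}, together with the structure of $\hF/F$, should give (LP1).

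The core of the argument is to show that algebraic mutation of the seed $t_\s$ at index $i$ reproduces the seed $t_{\mu_i(\s)}$. I would organize this by the type of the combinatorial move. In the \emph{activation} case ($i \notin S$, so $\mu_i(\s) = \s \cup \{S \oplus i\}$), the exchange relation $x_i x_i' = \hF_i^\s$ becomes $X_i \cdot Y_{S \oplus i} = \hF_i^\s$, which is precisely Lemma \ref{lem:maxmut}; so the new cluster variable is $Y_{S \oplus i}$ as required, matching \eqref{E:cluster}. In the \emph{deactivation} case ($i \in S^{\max}$), mutation is the inverse move and follows by the involutivity of LP mutation (\cite[Proposition 2.10]{LP}) applied to the activation case. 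In the \emph{internal mutation} case ($i \in S - S^{\max}$, $j = i^+$), the exchange relation $Y_{S_i} \cdot x_i' = \hF_i^\s$ is exactly the Dodgson/Jacobi identity of Lemma \ref{lem:Yex}, which produces $x_i' = Y_{S_{i^+} - \{i\} \oplus i^+}$, matching the internal mutation rule. The genuinely laborious part is then to verify that \emph{all the other} exchange polynomials transform correctly under the LP mutation rule \eqref{E:G} — i.e. that $F_k^{\mu_i(\s)}$ agrees (up to a unit) with the prescription in \eqref{E:hFi} for the collection $\mu_i(\s)$. This is where Lemma \ref{lem:ikj} and the commutation Theorem \ref{thm:comm} enter: the commutation theorem lets me reduce the verification of each $F_k'$ to cases where $i$ and $k$ interact minimally, and Lemma \ref{lem:ikj} provides the path-counting identity that tracks how the $P_S^{i,j}$ factors recombine when a vertex is activated or internally mutated.

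To establish the global structure I would proceed inductively along activation sequences, using Lemma \ref{lem:poset} to reach every maximal nested collection from $\emptyset$ and Lemma \ref{lem:exc} to guarantee well-definedness: whenever two activation sequences differ by swapping an exchangeable pair, the corresponding compositions of LP mutations must agree, and this is exactly guaranteed by Theorem \ref{thm:comm} since exchangeable activations correspond to indices whose exchange polynomials do not involve each other's variables. Thus the assignment $\s \mapsto t_\s$ is consistent, the mutation graph on $\C$ from Section \ref{sec:exc} is realized by LP mutations, and the seeds are connected (every $\s$ is reachable from $\emptyset$). Finally, normalization follows because the $Y_I$ and $X_i$ are fixed rational functions in $\FF$ with no unit ambiguity once the coefficient ring $R = \Z[A_1,\ldots,A_n]$ is fixed, so equivalent seeds must coincide.

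I expect the main obstacle to be the bookkeeping in the internal-mutation case: verifying that the non-mutated exchange polynomials $F_k^{\mu_i(\s)}$ emerge correctly from \eqref{E:G} requires carefully matching the cancellation of common factors with $\hF_i^\s|_{x_k \leftarrow 0}$ against the path-splitting identity of Lemma \ref{lem:ikj}, across the several cases of how $i$, $j=i^+$, and $k$ are situated relative to the nested structure. The external and maximal cases are comparatively transparent because $S \ominus i$ and the denominators simplify, but the internal case mixes the Jacobi identity with the recombination of $P$-polynomials and is where the sign and monomial-factor tracking is most delicate.
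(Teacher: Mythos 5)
Your plan follows the paper's own architecture almost exactly: induct along activation sequences (Proposition \ref{prop:spec}), use Theorem \ref{thm:comm} together with Lemma \ref{lem:exc} to show the resulting seed depends only on $\s$, identify the activation exchange relation with Lemma \ref{lem:maxmut} and the internal one with the Dodgson identity of Lemma \ref{lem:Yex}, handle deactivation by involutivity, and push the non-mutated exchange polynomials through the mutation rule using the path-splitting identity of Lemma \ref{lem:ikj}, with the internal-mutation case split according to how $k$ sits relative to $i$ and $j=i^+$. This is the decomposition carried out in Sections \ref{ss:activation}--\ref{ss:proof}.

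The one substantive idea you are missing is the mechanism that resolves the ``monomial-factor tracking'' you correctly flag as the main obstacle. All of the substitution computations in the paper (and in your plan) only determine each mutated exchange Laurent polynomial \emph{up to a Laurent monomial} in the cluster variables, because the LP mutation rule \eqref{E:G} involves cancelling unspecified common factors and multiplying by an undetermined monomial $M$. The paper does not resolve this by careful bookkeeping; it proves a rigidity statement (Proposition \ref{prop:rigid}) asserting that if $\hF_i$ agrees with the candidate $\hF_i^\s$ of \eqref{E:hFi} up to a monomial, then it equals it exactly. The proof of that rigidity is structural, not computational: Lemma \ref{lem:monden} together with Lemma \ref{lem:Fpoly} bounds the denominator from one side, and the Laurent phenomenon (Theorem \ref{thm:LP}) combined with the irreducibility and pairwise coprimality of the $Y_I$ numerators (Lemma \ref{lem:irr}) rules out any extra monomial factor, since otherwise the new cluster variable $MY_{S\oplus i}$ would fail to be a Laurent polynomial in the initial cluster. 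A companion statement, Proposition \ref{prop:irred}, is also needed to justify the cancellation of common factors in step \eqref{E:G} (e.g.\ that the factor $\sum_\ell P_R^{i,\ell}\X_\ell$ really is the entire common factor). Without these two propositions your plan stalls precisely at the point you identify as delicate: you would be attempting to track exact monomials through a definition that only determines them up to units, and you would have no way to certify that the $F'_k$ you compute are the irreducible, variable-free polynomials required by (LP1). The same rigidity argument is also what makes your normalization claim go through for the exchange polynomials, not just the cluster variables.
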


Thus $\A_\Gamma$ has
\begin{enumerate}
\item
clusters  $t_\s$ labeled by maximal nested collections $\s \in \C$.
\item
cluster variables $X_i$ and $Y_S$ for strongly connected components $S$ of $\Gamma$.
\item
exchange graph equal to the exchange graph on maximal nested collections $\C$ of Section \ref{sec:exc}.
\end{enumerate}

The proof of Theorem \ref{thm:main} will occupy the remainder of this section.  We now outline the strategy of the proof.  In Section \ref{ss:activation} we will describe the seeds $t_{\vec{S}}$ that can be obtained from the initial seed $t_\emptyset$ by mutating some sequence $\vec{S} =(s_1,s_2,\ldots,s_r) \subset [n]$, one at a time, without mutating any $i \in [n]$ twice.  These correspond to the activations of nested collections discussed in Section \ref{sec:nestmutations}.  We first describe only the exchange Laurent polynomials of these seeds for external (not-yet activated) vertices which is enough to carry through the inductive calculation, and show that $t_{\vec{S}}$ depends only on $\s$ and not the order of activation $\vec{S}$.  In Section \ref{ss:internal}, we then compute the exchange Laurent polynomials for internal vertices, thus establishing that $t_\s = t_{\vec{S}}$.  Finally, in Section \ref{ss:proof}, we shall show that $\mu_i(t_\s) = t_{\mu_i(\s)}$ for activated $i \in S - S^{\max}$.


\begin{example}
For the graph in Example \ref{ex:first}, take $\s = \{ \{1\},\{1,2\},\{1,2,3\}\}$.  We have $t_\s = \mu_3(\mu_2(\mu_1(t_\emptyset)))$.  Computing these mutations (and making choices of units so that all exchange polynomials are positive) we obtain that the seed $t_\s$ is given by
\begin{multline*}
(Y_1, F_1= 1+Y_{12})\\
\shoveleft{(Y_{12}, F_2 =1+Y_1^2+Y_1(2+Y_{123}))}\\ \shoveleft{(Y_{123}, F_3 = X_4(1+Y_1)(1+Y_{12})+A_1(1+Y_1+Y_{12})+A_2 Y_1(1+Y_1)+A_3 Y_1 Y_{12})} \\ 
\shoveleft{(X_4, F_4 = A_1(1+Y_{12}+Y_1^2+Y_1(2+Y_{123})+Y_1Y_{12})+} \\ A_2(Y_1^3+Y_1^2(2+Y_{123})+Y_1)+A_3(Y_1^2Y_{12}+Y_1Y_{12})+A_4Y_1Y_{12}Y_{123}).\end{multline*} 
Here, all exchange polynomials have been written in terms of the cluster variables $\{Y_1,Y_{12},Y_{123},X_4\}$ of $t_\s$.  One checks that $\hF_3 = F_3/Y_{1}$, and $\hF_4 = F_4/(Y_1Y_{12})$, and thus:
\begin{align*}
\hF_3 &= \frac{X_4(1+Y_1)(1+Y_{12})+A_1(1+Y_1+Y_{12})+A_2 Y_1(1+Y_1)+A_3 Y_1 Y_{12}}{Y_1} \\
&=X_4(Y_{12}+Y_2 +1)+A_1(1+Y_2)+A_2(1+Y_1)+A_3Y_{12}
\end{align*}
and 
\begin{align*}
\hF_4 &= \frac{1}{Y_1Y_{12}} (A_1(1+Y_{12}+Y_1^2+Y_1(2+Y_{123})+Y_1Y_{12}) \\
&+A_2(Y_1^3+Y_1^2(2+Y_{123})+Y_1)+A_3(Y_1^2Y_{12}+Y_1Y_{12})+A_4Y_1Y_{12}Y_{123})\\
&=A_1(Y_3+1)+A_2 Y_{13}+A_3(Y_1+1)+A_4Y_{123}
\end{align*}
agreeing with \eqref{E:hFi}.  For example, the coefficient $Y_{12}+Y_2 +1$ of $X_4$ in $\hF_3$ is a summation over the following three paths from $3$ to $4$ via $S = \{1,2\}$: 
$$3 \longrightarrow 4, \;\;\; 3 \longrightarrow 1 \longrightarrow 4, \;\;\; 3 \longrightarrow 2 \longrightarrow 1 \longrightarrow 4.$$
\end{example}

\subsection{Rigidity and irreducibility of exchange Laurent polynomials}
In this subsection we will make some general observations concerning the polynomials defined in \eqref{E:hFi}.  Let $\L(t)$ denote the Laurent polynomial ring of the seed $t$.  If $t$ has cluster variables $x_1,x_2,\ldots,x_n$, then $\L(t) = R[x_1^{\pm 1},\ldots,x_n^{\pm 1}]$.

\begin{lemma}
The cluster variables $X_i^\s$ of $t_\s$ are algebraically independent.
\end{lemma}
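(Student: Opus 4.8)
The plan is to prove algebraic independence by induction on the activation sequence that builds $\s$, using the elementary counting principle that a field extension of transcendence degree $n$ which is generated by $n$ elements must have those elements as a transcendence basis. Since $\FF$ has transcendence degree $n$ over $\Frac(R)$ and $t_\s$ has exactly $n$ cluster variables, it suffices to show that the subfield $K_\s \subseteq \FF$ generated over $\Frac(R)$ by the $X_i^\s$ has transcendence degree $n$. By Lemma \ref{lem:poset} every maximal nested collection $\s$ arises from the empty collection by a sequence of activations, so I would induct on $|S|$. The base case $S = \emptyset$ is immediate: the cluster variables are then $X_1,\dots,X_n$, the indeterminates defining $\FF$, which are algebraically independent by construction.

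For the inductive step, write $\s' = \mu_s(\s)$ with $s \notin S$. First I would check that passing from $t_\s$ to $t_{\s'}$ replaces the single external variable $X_s$ by $Y_{S\oplus s}$ and leaves every other cluster variable unchanged: for $i \in S$ the smallest member of $\s'$ containing $i$ is still $S_i$, since $S_i \subseteq S$ while $S \oplus s$ contains $s \notin S$, forcing $S_i \subsetneq S\oplus s$. The heart of the argument is then to show $K_{\s'} = K_\s$. By Lemma \ref{lem:maxmut},
$$X_s\, Y_{S \oplus s} = \frac{\sum_{j \not \in Ss} P_S^{s,j} X_j + \sum_{j \in Ss} P_S^{s,j} A_j}{Y_{S \ominus s}},$$
and every $Y$-function occurring on the right (the summands $Y_{S-p}$ inside each $P_S^{s,j}$, and $Y_{S\ominus s}$) is indexed by a subset of $S$. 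By Proposition \ref{prop:Yrat} each of these is a rational function of $\{Y_{S_i} \mid S_i \in \s\}$, all of which, together with the external variables $X_j$ for $j \notin Ss$, lie in $K_\s \cap K_{\s'}$. Hence the displayed right-hand side lies in $K_\s \cap K_{\s'}$, so $Y_{S\oplus s}$ lies in $K_\s$ and $X_s$ lies in $K_{\s'}$, each being the right-hand side divided by the other factor. Since the two clusters share all of their remaining variables, this gives $K_\s = K_{\s'}$, whence $K_{\s'}$ has transcendence degree $n$ and the counting principle completes the step.

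The only genuine input beyond bookkeeping is Proposition \ref{prop:Yrat}, which is exactly what guarantees that all the auxiliary $Y_T$ with $T \subseteq S$ appearing in the exchange relation are already captured by the current cluster. I expect the main care to be needed in verifying that the cluster variables change only in the claimed single slot under activation, and in confirming that every such $Y_T$ is genuinely indexed inside $S$ so that Proposition \ref{prop:Yrat} applies. An alternative would be to compute a Jacobian determinant of the $X_i^\s$ with respect to $X_1,\dots,X_n$ and show it is nonzero, but the field-equality induction above is cleaner and avoids any explicit differentiation.
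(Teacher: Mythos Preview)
Your proof is correct and follows essentially the same idea as the paper: the paper's proof is the single line ``Follows easily from Proposition \ref{prop:Yrat},'' and your inductive argument simply makes explicit how that proposition, together with the transcendence-degree count, forces the field generated by the cluster variables of $t_\s$ to equal $\FF$. Your induction on activations is a perfectly clean way to unpack that one line; the only ingredient you rely on beyond bookkeeping is Proposition \ref{prop:Yrat}, exactly as the paper does.
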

\begin{proof}
Follows easily from Proposition \ref{prop:Yrat}.
\end{proof}

We note that all the denominators of $\hF^\s_i$ defined in \eqref{E:hFi} are monomials in $\L(t_\s)$.  For $i \notin S$ or $i \in S^{\max}$, we have that $S \ominus i$ is the union of some of the strongly connected components of $S$.  For $i \in S-S^{\max}$,  both $S \ominus i$ and $S\ominus j$ are unions of some of the strongly connected components of $R$.  The strongly connected components of $R$ are exactly the $S_k \in \s$ such that $k^+ = i$ or $k^+ = j$.

\begin{prop}\label{prop:irred}
Let $\s \in \C$ be a maximal nested collection, and suppose it is known that
\begin{equation}\label{E:laurent}
\{Y_I \mid I \subset S\} \subset \L(t_\s).
\end{equation}
Then the exchange polynomials $\hF_i^\s$ of \eqref{E:hFi} are irreducible in $\L(t_\s)$.
\end{prop}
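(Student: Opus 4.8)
The plan is to show that, under the standing hypothesis \eqref{E:laurent}, each $\hF_i^\s$ lies in $\L(t_\s)$ and is, up to a unit, a degree-one polynomial in a single distinguished variable of the ring $\L(t_\s) = \Z[A_1,\dots,A_n][(X_1^\s)^{\pm},\dots,(X_n^\s)^{\pm}]$ whose leading coefficient is a unit. The guiding principle is elementary: if $f = u z + v$ in a domain $D[z]$ (or $D[z^{\pm}]$) with $u$ a unit, $v \in D$ free of $z$, and $v \neq 0$, then $f$ is irreducible, since any factorization must split the $z$-degree as $1+0$ and the degree-zero factor then divides the unit leading coefficient. Everything reduces to identifying the right variable $z$ and checking that the leading coefficient is a unit, i.e. a Laurent monomial in the cluster variables $X_i^\s$.

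First I would record that each $\hF_i^\s$ of \eqref{E:hFi} does lie in $\L(t_\s)$: its numerator is built from the $P_S^{i,j}$, $P_R^{i,j}$ and various $Y_I$ with $I \subseteq S$, together with the external $X_j$ and the $A_j$, all of which lie in $\L(t_\s)$ by \eqref{E:laurent}, while its denominator is a monomial in cluster variables, hence a unit. The crucial structural input, used throughout, is that by Proposition \ref{prop:Yrat} every $Y_I$ with $I \subseteq S$ is a rational function with \emph{integer} coefficients in the cluster $Y$'s; consequently each $P_S^{i,j}$ and each $P_R^{i,j}$, being a $\Z$-combination of such $Y_I$, lies in the subring $\Z[\{Y_{S_k}^{\pm}\}]$ of $\L(t_\s)$ and involves no $A_k$ and no external $X_j$. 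Since the cluster variables together with the $A_k$ are algebraically independent, this ``$A$-freeness'' is a genuine statement about the expansion in $\L(t_\s)$.

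For $i \notin S$ or $i \in S^{\max}$ (the first line of \eqref{E:hFi}), the only $A$'s in $\hF_i^\s$ come from the explicit factors $A_j$ in $\sum_{j \in Si} P_S^{i,j} A_j$, so $\hF_i^\s$ is multilinear in the $A$'s and I take $z = A_i$. The coefficient of $A_i$ is $P_S^{i,i}/Y_{S \ominus i} = Y_S/Y_{S \ominus i}$ (resp. $Y_{S - i}/Y_{S\ominus i}$ when $i \in S$), and by Lemmas \ref{lem:Yfac} and \ref{lem:YiS} this is the product of $Y_{S_k}$ over the strongly connected components of $S$ absorbed into $S \oplus i$ (resp. over the children of $i$), a monomial in cluster variables and hence a unit; the remaining terms are free of $A_i$. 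For $i \in S - S^{\max}$ with $j = i^+$ and $R = S_j - \{i,j\}$ (the second line of \eqref{E:hFi}), I take $z = Y_{S_j}$. Here $Rij = S_j$, so $Y_{Rij} = Y_{S_j}$ occurs linearly, while $Y_R$, $P_R^{i,j}$, $P_R^{j,i}$ are supported on $R \subsetneq S_j$ and so, applying Proposition \ref{prop:Yrat} to the maximal nested collection $\{S_k \mid S_k \subseteq R\}$ on $R$, are free of $Y_{S_j}$. The leading coefficient is $Y_R/(Y_{R\ominus i} Y_{R \ominus j})$; since the strongly connected components of $R$ are exactly the $S_k$ with $k^+ \in \{i,j\}$ and $R \ominus i, R\ominus j$ are unions of these, Lemma \ref{lem:Yfac} again makes it a monomial in cluster variables, hence a unit. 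Nonvanishing of the constant term $P_R^{i,j}P_R^{j,i}/(Y_{R\ominus i}Y_{R\ominus j})$ follows from strong connectivity of $S_j$, which supplies simple paths $i \longrightarrow_R j$ and $j \longrightarrow_R i$.

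The one point requiring care --- and the main obstacle --- is the bookkeeping that the non-leading part of $\hF_i^\s$ is genuinely free of the chosen variable $z$. In the first case this is the assertion that no hidden $A_i$ is produced when the $P_S^{i,j}$ are expanded in the cluster variables, which is precisely the integrality in Proposition \ref{prop:Yrat}; in the second case it is the assertion that $Y_R$ and the $P_R^{\bullet,\bullet}$ do not involve $Y_{S_j}$, for which I must invoke Proposition \ref{prop:Yrat} over the correct support $R$ rather than over $S_j$. Once these are in place, the linear-with-unit-leading-coefficient criterion yields irreducibility of $\hF_i^\s$ in $\L(t_\s)$ in every case.
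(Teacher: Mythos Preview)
Your proof is correct and follows essentially the same approach as the paper's own argument: in each case you single out a distinguished variable ($A_i$ when $i\notin S$ or $i\in S^{\max}$, and $Y_{Rij}=Y_{S_j}$ when $i\in S-S^{\max}$), observe that $\hF_i^{\s}$ is linear in that variable with leading coefficient a Laurent monomial in the cluster variables, and appeal to Proposition~\ref{prop:Yrat} together with~\eqref{E:laurent} to see the remaining terms are free of the chosen variable. The paper's proof is terser but identical in substance; your version spells out the leading coefficients and the nonvanishing of the constant term in the internal case, which the paper leaves implicit.
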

\begin{proof}
Suppose first $i \in S^{\max}$ or $i \notin S$.  The denominator $Y_{S \ominus i}$ is a monomial in the cluster variables of $\L(t_\s)$, and each $P^{i,j}_S$ is a polynomial in the $\{Y_I \mid I \subset S\}$, so it follows from \eqref{E:laurent} that we have $\hF_i^\s \propto MA_i + N$ for a monomial $M$ in the $\{X_k^\s \mid k \in [n]\}$ and a polynomials $N \in R[X_i^\s]$ which does not involve $A_i$.  It follows easily that $\hF_i^\s$ is irreducible in $\L(t_\s)$.

Now if $i \in S - S^{\max}$, then again the denominator $Y_{R\ominus i}Y_{R \ominus j}$ is checked to be a unit in $\L(t_\s)$.  Now we apply the same argument as before but with the cluster variable $Y_{Rij}$ instead of $A_i$.  Note that by \eqref{E:laurent} and Proposition \ref{prop:Yrat}, $Y_I$ for $I \subset R$ will be a Laurent polynomial not involving $Y_{Rij}$.
\end{proof}

Equation \eqref{E:laurent} will follow from Corollary \ref{C:laurent} below.

It turns out one does not have to compute the exchange Laurent polynomials explicitly, but only up to a monomial factor in the cluster variables of the corresponding cluster.  This will simplify the subsequent calculations significantly. Let $\A(t_\emptyset)$ denote the LP algebra generated by $t_\emptyset$.  The LP algebra $\A(t_\emptyset)$ is only defined up to choices of a unit $\pm 1 \in R$ for all the mutations; we fix these choices by insisting that all cluster variables of $\A(t_\emptyset)$ are Laurent polynomials in $X_1,X_2,\ldots,X_n$ with positive coefficients.  That such a choice can be made will follow from our constructions.  It is also clear that this choice is compatible with Theorem \ref{thm:comm}.

\begin{prop}\label{prop:rigid}
Let $\s \in \C$, and assume it is known that $\{Y_I \mid I \subset S\}$ are all cluster variables of $A(t_\emptyset)$.  Suppose $t = \{(X_i^\s,F_i)\}$ is a seed of $A(t_\emptyset)$ with the same cluster variables $X_i^\s$ as $t_\s$ but possibly different exchange polynomials.  Suppose also that the exchange Laurent polynomials $\hF_i$ for $i \notin S$ or $i \in S^{\max}$ are all known to be equal to $\hF_i^\s$ of \eqref{E:hFi} up to a monomial in the $\{X_r^\s \mid r \in [n]\}$.  Then $\hF_i = \hF_i^\s$ for all $i \notin S$ or $i \in S^{\max}$.  Furthermore if for any $i \in S-S^{\max}$ we have that $\hF_i$ is equal to $\hF^\s_i$ up to a monomial factor, then $\hF_i = \hF^\s_i$ as well.
\end{prop}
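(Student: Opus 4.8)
The plan is to show that knowing each exchange Laurent polynomial $\hF_i$ up to a monomial factor in the $\{X_r^\s\}$ pins it down exactly, by combining the irreducibility from Proposition \ref{prop:irred} with the normalization condition \eqref{hatF} that defines exchange Laurent polynomials uniquely from the exchange polynomials $F_i$.

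\begin{proof}
The hypothesis gives us, for each relevant $i$, that $\hF_i = m_i \cdot \hF_i^\s$ for some Laurent monomial $m_i = r\prod_r (X_r^\s)^{a_r}$ in the cluster variables of $t_\s$. Our task is to show $m_i$ is trivial. Since $\{Y_I \mid I \subset S\}$ are known to be cluster variables of $\A(t_\emptyset)$, Theorem \ref{thm:LP} guarantees they are Laurent polynomials in the $X_r^\s$, so hypothesis \eqref{E:laurent} of Proposition \ref{prop:irred} holds and we conclude that $\hF_i^\s$ is an irreducible element of $\L(t_\s)$. First I would recall the defining properties of a genuine exchange Laurent polynomial from \eqref{hatF}: $\hF_i$ must be of the form $\prod_{r\neq i}(X_r^\s)^{b_r}\,F_i$ with $b_r \in \Z_{\leq 0}$ and $F_i \in \P'$ irreducible and divisible by no variable, and moreover $\hF_i$ must satisfy the non-divisibility condition of \eqref{hatF} under each substitution $X_r^\s \leftarrow F_r/x$. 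Because $t$ is assumed to be an honest seed of $\A(t_\emptyset)$, its $\hF_i$ already satisfy all these conditions; the content of the proposition is that these conditions, together with matching $\hF_i$ to $\hF_i^\s$ up to a monomial, force equality.

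The key step is a normalization argument. Writing $\hF_i^\s = m_i^{-1}\hF_i$, irreducibility of $\hF_i^\s$ means that, after clearing the monomial, both $\hF_i$ and $\hF_i^\s$ have the same irreducible "polynomial part" $F_i$ in the sense of \eqref{hatF}, differing only by a monomial in the $X_r^\s$ with coefficient a unit of $R$. The two constraints on the monomial exponents pull in opposite directions and meet exactly. On one hand, the exponents $b_r$ appearing in $\hF_i$ (equivalently the negative powers of $X_r^\s$ in the denominator) are forced to be as large as possible by the requirement that $\hF_i$ be a genuine exchange Laurent polynomial, i.e.\ that after each substitution the result is not divisible by $F_r$; this is precisely the maximality encoded in Lemma \ref{lem:monden}. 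On the other hand, the monomial $M$ appearing in the mutation rule is chosen minimally, so that $F_i \in \P'$ satisfies (LP2) and is divisible by no variable. Since $\hF_i^\s$ as written in \eqref{E:hFi} has denominator exactly $Y_{S\ominus i}$ (for $i\notin S$ or $i\in S^{\max}$) or $Y_{R\ominus i}Y_{R\ominus j}$ (for internal $i$), and each such denominator is a monomial in the cluster variables of $\L(t_\s)$, comparing the two extremal conditions shows the monomial exponents of $\hF_i$ must agree with those of $\hF_i^\s$; I expect this matching of maximal-denominator and minimal-numerator exponents to be the main obstacle, as it requires checking that the $Y_{S\ominus i}$ denominator genuinely realizes the extremal exponent in \eqref{hatF} rather than being improvable.

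Once the monomial exponents coincide, only the unit $r \in R$ remains. For $i \notin S$ or $i \in S^{\max}$, the proof of Proposition \ref{prop:irred} exhibits $\hF_i^\s \propto M A_i + N$ with $A_i$ appearing linearly and $N \in R[X_i^\s]$ free of $A_i$; since $\hF_i$ and $\hF_i^\s$ both have this shape with the same denominator, the coefficient of the distinguished monomial $M A_i$ normalizes the unit to $1$, giving $\hF_i = \hF_i^\s$ outright. For the internal case $i \in S - S^{\max}$, the identical argument applies with the cluster variable $Y_{Rij}$ playing the role previously played by $A_i$: by Proposition \ref{prop:Yrat} and \eqref{E:laurent}, every other $Y_I$ with $I \subset R$ is a Laurent polynomial not involving $Y_{Rij}$, so $Y_{Rij}$ appears in exactly one distinguished term of $\hF_i^\s$, and matching that term again fixes the unit. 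This handles the final clause, yielding $\hF_i = \hF_i^\s$ for every $i$ for which $\hF_i$ was assumed equal to $\hF_i^\s$ up to a monomial.
\end{proof}
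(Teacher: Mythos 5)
Your proposal correctly establishes one direction of the comparison: via Lemma \ref{lem:Fpoly} and Lemma \ref{lem:monden}, the denominator of the true $\hF_i$ must be at least as large as the denominator $Y_{S\ominus i}$ of $\hF_i^\s$ (and the unit-normalization at the end is a reasonable, if nonstandard, way to handle the sign). But the converse bound --- that the denominator of $\hF_i$ cannot be \emph{strictly larger}, i.e.\ that $Y_{S\ominus i}$ genuinely realizes the extremal exponents of \eqref{hatF} --- is exactly the point you flag as ``the main obstacle'' and then do not prove. Your appeal to the minimality of the monomial $M$ in the mutation rule does not supply it: that condition constrains the polynomial $F_i$ (no variable divides it), not the negative exponents $a_j$ in $\hF_i/F_i$, so the ``two extremal conditions'' you compare do not meet. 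As stated, your argument only shows $\hF_i = M\,\hF_i^\s$ for some negatively-powered monomial $M$ in the $Y$-variables (and you also do not rule out external $X$-variables occurring in $M$; the paper does this by noting that for $k\notin S$ the polynomial $\hF_k$ contains $A_k$, which does not occur in $\hF_i$, so the substitution condition in \eqref{hatF} forbids dividing by $X_k$).

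The missing idea in the paper is to look not at $\hF_i$ itself but at the cluster variable it produces: by Lemma \ref{lem:maxmut} (resp.\ Lemma \ref{lem:Yex} in the internal case), $\hF_i^\s/X_i^\s$ equals $Y_{S\oplus i}$ (resp.\ $Y_{R\oplus j}$), so a nontrivial $M$ would make the mutated cluster variable equal to $M\,Y_{S\oplus i}$. By Theorem \ref{thm:LP} this must be a Laurent polynomial in the initial cluster $X_1,\dots,X_n$, and Lemma \ref{lem:irr} (irreducibility and pairwise coprimality of the numerators of the $Y_I$) shows that dividing $Y_{S\oplus i}$ by other $Y$-variables destroys Laurentness. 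Hence $M=1$. Without some version of this step --- which uses Lemma \ref{lem:irr} and the Laurent phenomenon over $t_\emptyset$, neither of which appears in your write-up --- the proposition is not proved.
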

\begin{proof}
First suppose that $i \notin S$ or $i \in S^{\max}$.  We need to argue that $\hF_i$ is equal to
$$\hF^\s_i = \frac{\sum_{j \not \in Si} P_S^{i,j} X_j + \sum_{j \in Si} P_S^{i,j} A_j}{Y_{S \ominus i}}.$$
Applying Lemma \ref{lem:Fpoly} and Lemma \ref{lem:monden}, we conclude $\hF_i$ is obtained from $\hF^\s_i$ by possibly dividing by {\it {more}} of the $Y$ variables in $t$.  The $X$-variables cannot occur in $\hF_i/F_i$ since for $k \notin S$, $\hF_k$ depends on $A_k$ which does not occur in $\hF_i$.


On the other hand, if we have $\hF_i = \hF_i^\s M$ where $M$ is a (negatively-powered) monomial in the $Y$ variables, then the cluster variable $X'_i = \hF_i/X_i$ in $\mu_i(t)$ would be equal to 
$$
\frac{\hF_i}{X_i} = M\frac{\hF_i^\s}{X_i} = MY_{S \oplus i},$$ where the last equality is by Lemma \ref{lem:maxmut}. This is impossible by Lemma \ref{lem:irr} and the fact that the resulting cluster variable has to be a Laurent polynomial in the $t_\emptyset$ cluster. 

Now suppose $i \in S-S^{\max}$, and set $j = i^+$ and $R = S_j -\{i,j\}$.  Then
$$\hF^{\s}_i = \frac{Y_{Rij} Y_{R} + P_R^{i,j} P_R^{j,i}}{Y_{R \ominus i} Y_{R \ominus j}}.$$
Just as before, none of the $X_k$ variables can be in the denominator $\hF_i/F_i$.  The rest of the argument is the same, with Lemma \ref{lem:Yex} and $Y_{R \oplus j}$ instead of Lemma \ref{lem:maxmut} and $Y_{S \oplus i}$.
\end{proof}

\subsection{Clusters reachable from the initial seed via an activation sequence}
\label{ss:activation}

%
Let $\vec{S} = (s_1,s_2,\ldots,s_r)$ be an activation sequence giving a maximal nested collection $\s$ with support $S$.  Denote by $t_{\vec{S}}$ the seed $\mu_{s_r} \circ \cdots \circ \mu_{s_1}(t_\emptyset)$ of $\A(t_\emptyset)$. 

\begin{prop} \label{prop:spec}
In $t_{\vec{S}}$
 \begin{enumerate}
\item the cluster variables are those of $t_\s$ as defined in \eqref{E:cluster}
 \item for all $i \notin S$ or $i \in S^{\max}$ the exchange Laurent polynomials $\hF_i$ agree with the definition \eqref{E:hFi} of $\hF_i^\s$ for $t_\s$ .
 \item the seed $t_{\vec{S}}$ depends only on $\s$, and not on the actual activation sequence $\vec{S}$.
\end{enumerate}
\end{prop}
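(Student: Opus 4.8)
The plan is to prove the three statements simultaneously by induction on the length $r$ of the activation sequence $\vec{S}$, the base case $\vec{S}=\emptyset$ being the already-verified description of $t_\emptyset$. For the step, write $s = s_r$, let $\vec{S}' = (s_1,\ldots,s_{r-1})$ produce the maximal nested collection $\s'$ on $S' = S-\{s\}$, so that $\s = \mu_s(\s') = \s' \cup \{S_s\}$ with $S_s = S'\oplus s$. I would first establish part (1) and the external case of part (2), then deduce (3), and finally obtain the maximal case of (2) as a corollary. Part (1) is immediate: in $t_{\vec{S}'}$ the vertex $s$ is external, so by induction its exchange Laurent polynomial is $\hF_s^{\s'}$, which by Lemma \ref{lem:maxmut} equals $X_s Y_{S_s}$; hence $\mu_s$ replaces $X_s$ by $\hF_s/X_s = Y_{S_s}$ and fixes all other cluster variables, matching \eqref{E:cluster} for $\s$.

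For the external case of part (2), fix $i \notin S$ (so $i,s \notin S'$ and $i \ne s$). Expressing $\hF_i^{\s'}$ in the cluster of $t_{\vec{S}'}$, each coefficient $P_{S'}^{i,j}$ and the denominator $Y_{S'\ominus i}$ become monomial/polynomial in the variables $Y_{S_l}$ by Proposition \ref{prop:Yrat}, so that the cluster variable $X_s$ occurs only in the distinguished term $P_{S'}^{i,s}X_s$. The mutation rule, via Remark \ref{rem:subs} (valid up to a monomial because $\hF_i/F_i$ is free of $X_s$), then substitutes $X_s \leftarrow \hF_s|_{X_i=0}/Y_{S_s}$. Using $Y_{S_s}Y_{S'\ominus s} = Y_{S's}$ (Lemma \ref{lem:YiS}) and the three-term determinantal relation of Lemma \ref{lem:ikj} with $S := S'$ and $k := s$, the coefficient of each $\bar X_j$ collapses to $P_{S's}^{i,j}Y_{S'} = P_S^{i,j}Y_{S'}$. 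Thus the substituted expression is a uniform monomial multiple of $\hF_i^\s$, the monomial $Y_{S'}Y_{S\ominus i}/(Y_{S'\ominus i}Y_{S's})$ being a unit in $\L(t_\s)$; Proposition \ref{prop:rigid} then upgrades $\propto$ to the equality $\hF_i = \hF_i^\s$.

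For part (3), Lemma \ref{lem:exc} reduces matters to showing that a swap of an exchangeable adjacent pair $(s_a,s_{a+1})$ leaves the seed unchanged. At the seed preceding such a pair both vertices are external, with exchange polynomials described by the previous step; disjointness of $S_{s_a}$ and $S_{s_{a+1}}$ rules out mutual reachability through the already-activated vertices, so a directed path can run in at most one direction, and hence at least one of $X_{s_{a+1}}\notin F_{s_a}$ or $X_{s_a}\notin F_{s_{a+1}}$ holds. Since the constant terms $A_{s_a}, A_{s_{a+1}}$ differ, $F_{s_a}/F_{s_{a+1}}$ is not a unit, and Theorem \ref{thm:comm} (applied in the appropriate orientation) gives commuting mutations, whence $t_{\vec{S}}$ depends only on $\s$. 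The maximal case of part (2) now follows: for $i \in S^{\max}$, use (3) to choose the activation sequence with $i$ last, so $t_{\vec{S}} = \mu_i(t_{\vec{S}^i})$ with $i$ external in $t_{\vec{S}^i}$. Because the exchange relation $X_i X_i' = \hF_i$ is symmetric, the exchange Laurent polynomial attached to the new variable $Y_{S_i}$ is the same polynomial $\hF_i^{\s^i}$, free of both $X_i$ and $Y_{S_i}$; since a simple path beginning at $i$ cannot use $i$ as an interior vertex one has $P_{S'}^{i,j} = P_S^{i,j}$, and this identifies $\hF_i^{\s^i}$ with $\hF_i^\s$ of \eqref{E:hFi}.

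The main obstacle is the substitution computation of the second paragraph: one must verify both that, once everything is written in cluster variables, $X_s$ appears in $\hF_i^{\s'}$ only in the single term $P_{S'}^{i,s}X_s$, and that Lemma \ref{lem:ikj} reassembles the coefficients with the correct signs and a single support-independent monomial factor. Tracking that monomial, together with the check in the third paragraph that exchangeability genuinely forbids two-way reachability (so that the hypothesis of Theorem \ref{thm:comm} is met), are the two delicate points; everything else is a formal consequence of the identities of Section \ref{sec:Y} and the rigidity of Proposition \ref{prop:rigid}.
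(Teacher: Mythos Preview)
Your proof is correct and follows the same inductive strategy as the paper: simultaneous induction on the length of $\vec S$, with part (1) via Lemma~\ref{lem:maxmut}, the external case of (2) via the substitution computation and Lemma~\ref{lem:ikj}, and part (3) via Lemma~\ref{lem:exc} together with Theorem~\ref{thm:comm}. The one organizational difference is that you defer the maximal case of (2) until after (3), then use (3) to place $i\in S^{\max}$ last in the activation order and invoke the invariance $\hF'_i=\hF_i$ under mutation at $i$; the paper instead handles $i\in (S')^{\max}\cap S^{\max}$ directly inside the proof of (2) by a two-case split on the direction of reachability between $i$ and the newly activated vertex (one subcase is a factoring argument, the other is itself a commutation/reorder, so the ideas coincide). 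Your ordering is slightly cleaner in that it avoids that case split, at the cost of having to justify that Proposition~\ref{prop:rigid} can be applied to a single external $i$ using only what you have established so far; this is fine, since the argument in its proof that no external $X_k$ occurs in $\hF_i/F_i$ uses only that each external $\hF_k$ contains $A_k$, which you already know up to monomial from the same computation.
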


\begin{corollary}\label{C:laurent}
For each strongly connected subset $T \subset [n]$ and each $\s \in \C$ we have $Y_T \in \L(t_\s)$.
\end{corollary}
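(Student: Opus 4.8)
The plan is to derive the statement from Proposition~\ref{prop:spec} together with the Laurent phenomenon (Theorem~\ref{thm:LP}). The strategy splits into two observations: first, that every $Y_T$ with $T$ strongly connected is itself a cluster variable of the LP algebra $\A(t_\emptyset)$; and second, that for every $\s \in \C$ the seed $t_\s$ is a genuine seed of $\A(t_\emptyset)$. Granting these, Theorem~\ref{thm:LP} finishes the job at once: as a cluster variable, $Y_T$ is a Laurent polynomial in the cluster variables of any seed of $\A(t_\emptyset)$, in particular of $t_\s$, and this is precisely the assertion $Y_T \in \L(t_\s)$.

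For the first observation, fix a strongly connected $T \subset [n]$ and choose, via Lemma~\ref{lem:poset}, an activation sequence $\vec{T}$ supported on $T$, producing a maximal nested collection $\s_T$ with support $T$. The key point to record is the combinatorial fact that the maximal subsets of a maximal nested collection on support $T$ are pairwise disjoint (maximal elements of $\preceq_{\s_T}$ cannot be nested) and hence, by the second condition in the definition of a nested collection, are exactly the strongly connected components of their union $T$. Since $T$ is strongly connected it is its own unique such component, so $T$ is the unique maximal subset occurring in $\s_T$; that is, $T = S_k$ where $k$ is the maximal element of $T$ in the poset $\preceq_{\s_T}$. By Proposition~\ref{prop:spec}(1), the cluster variables of the seed $t_{\vec{T}}$ are exactly those prescribed by \eqref{E:cluster}, and the internal one indexed by $k$ is $Y_{S_k} = Y_T$. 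Thus $Y_T$ occurs literally as a cluster variable in a seed of $\A(t_\emptyset)$, so it is a cluster variable of $\A(t_\emptyset)$.

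For the second observation, Lemma~\ref{lem:poset} guarantees that an arbitrary $\s \in \C$ is realized by some activation sequence $\vec{S}$, and Proposition~\ref{prop:spec} identifies $t_{\vec{S}}$ with $t_\s$; in particular $t_\s$ is a seed of $\A(t_\emptyset)$. Applying Theorem~\ref{thm:LP} to the cluster variable $Y_T$ and the seed $t_\s$ then gives $Y_T \in \L(t_\s)$. The bulk of the work has already been carried out in Proposition~\ref{prop:spec}, so the deduction is short; the only genuinely new ingredient is the identification of $Y_T$ as a cluster variable, and I expect the one point meriting care to be the combinatorial claim that a strongly connected set is the unique maximal subset of any maximal nested collection supported on it. I would also note two harmless subtleties: the sign and unit ambiguities in the definition of $\A(t_\emptyset)$ do not affect membership in $\L(t_\s)$, since that ring is closed under multiplication by units; and, strictly speaking, Proposition~\ref{prop:spec} and this corollary are established together by induction on $|S|$, so the argument above should be read as the inductive deduction of the corollary from the instances of Proposition~\ref{prop:spec} for collections of no larger support.
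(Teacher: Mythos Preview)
Your proposal is correct and follows essentially the same route as the paper: the paper's one-line proof invokes Proposition~\ref{prop:spec}(1) to exhibit each $Y_T$ as a cluster variable of some $t_{\vec T}$ and to identify the cluster variables of $t_\s$ with those of $t_{\vec S}$, then applies Theorem~\ref{thm:LP}. One minor imprecision: Proposition~\ref{prop:spec} alone does not fully identify $t_{\vec S}$ with $t_\s$ (the internal exchange polynomials are handled later in Proposition~\ref{prop:Ymut}), but since $\L(t_\s)$ depends only on the cluster variables, part~(1) is all that is needed here.
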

\begin{proof}
Follows immediately from Proposition \ref{prop:spec}(1) and Theorem \ref{thm:LP}.
\end{proof}

\begin{proof}[Proof of Proposition \ref{prop:spec}]
We prove the three claims by simultaneous induction on the size of the support $S$. The base case $|S| = 0$ is clear.

Assume the result has been shown for $\vec{S}'=(s_1,s_2,\ldots,s_{r-1})$ and we now want to verify it for $\vec{S} = (s_1,s_2,\ldots,s_{r-1},k)$ where $k\notin S'$.  Write $\s'$ for the maximal nested collection with activation sequence $\vec{S}'$, and similarly for $\s$.  Note that $S = S' \cup \{k\}$.  We shall write $\hF_i$ (resp. $\hF'_i$) for the Laurent exchange polynomials of the seed $t = t_{\vec{S}}$ (resp. $t' = t_{\vec{S'}}$).
By definition and the inductive hypothesis, the cluster variable $X_k$ in $t_{\vec{S}'}$ is exchanged for 
$$
\hF'_i/X_k = \frac{\sum_{j \not \in Sk} P_S^{k,j} X_j + \sum_{j \in Sk} P_S^{k,j} A_j}{Y_{S \ominus k}X_k} = Y_{S \oplus k}
$$
by Lemma \ref{lem:maxmut}.  But $\s = \s' \cup \{S \oplus k\}$, so we have verified claim (1).

We now consider claim (2).  We first make the general observation that by Theorem \ref{thm:LP}, each $\hF'_i$ of \eqref{E:hFi} can be seen to lie in $\L(t')$, since each $Y_T$ for $T \subset S'$ is known to be a cluster variable of $\A(t_\emptyset)$.

We know that $\hF_k = \hF'_k$ and this easily agrees with \eqref{E:hFi}.  

Now suppose $i \in (S')^{\max} \cap S^{\max}$ is maximal in both $S'$ and in $S$.  Thus $i$ and $k$ are not in the same strongly connected component of $S$, so either there is no path from $i$ to $k$ through $S$, or there is no path from $k$ to $i$ through $S$.

In the first case, we see that the cluster variable $X_k$ does not appear in $\hF'_i$.   Therefore $F_i = F'_i$, so $\hF_i = \hF'_i$ up to a monomial in $\L(t)$.  But we note that $\hF'_i = \hF^{\s'}_i$ is actually equal to the predicted $\hF^\s_i$ for the following reason. Since we cannot get from $i$ to $S \oplus k$ via $S$ (resp. $S'$), each of the $P_S^{i,\ell}$ (resp. $P_{S'}^{i_\ell}$) actually factors, producing $Y_{S \oplus k}$ (resp. $Y_{(S \oplus k)-\{k\}}$) factors. This same factor appears in the denominator of $\hF^{\s}_i$ (resp. $\hF^{\s'}_i$) which is  $Y_{S \ominus i}$ (resp. $Y_{S' \ominus i}$).  After these factors are canceled, we see that $\hF^\s_i = \hF^{\s'}_i$.   It follows from Proposition \ref{prop:rigid} that $\hF_i = \hF^\s_i$.

In the second case, consider the seed $t_{\vec{S''}}$ with $S'' = S'-\{i\}$ and such that activation of $i$ gives $t'$.  This seed exists by the inductive hypothesis (3).  We have two cluster variables in $t_{\vec{S''}}$, $X_i$ and $X_k$, such that $X_i$ does not appear in $\hF_k$ (and thus also not in $F_k$). Then by Theorem \ref{thm:comm} we know the two mutations commute, and $\hF_i$ after activating $i$ and then $k$ is the same as the exchange Laurent polynomial $\hF'''_i$ in the seed $t''' = \mu_k(t_{\vec{S''}})$ after just activating $k$.  But $t'''$ falls inside our inductive hypothesis.  It remains to note that $P_{S-\{i\}}^{i, \ell} = P_{S}^{i,\ell}$, and thus $\hF_i = \hF'''_i = \hF^{\s'''}_i = \hF^\s_i$, establishing (2) in this case.

Consider now the case when $i \notin S$ is external. The case where there is no path from $i$ to $k$ through $S$ is treated the same way as for $i \in (S')^{\max} \cap S^{\max}$. Assume a path from $i$ to $k$ through $S$ exists.  By the induction assumption, we have 
$$\hF'_i = \frac{\sum_{\ell \not \in Si} P_S^{i,\ell} X_{\ell} + \sum_{\ell \in S i} P_S^{i,\ell} A_{\ell}}{Y_{S \ominus i}}$$
and 
$$\hF'_k|_{X_i=0} = \frac{\sum_{\ell \not \in Sik} P_S^{k,\ell} X_{\ell} + \sum_{\ell \in Sk} P_S^{k,\ell} A_{\ell}}{Y_{S \ominus k}}.$$
By Remark \ref{rem:subs}, we can calculate $\hF_i$ by substituting into $\hF'_i$.
%
%

Substituting $\hF'_k|_{X_i=0}/Y_{S\oplus k}$ for $X_k$ in $\hF'_i$ gives
$$
\frac{1}{Y_{S \oplus k}} \sum_{\ell \neq i}  \left( \frac{P_S^{k,\ell}}{Y_{S \ominus k}} \cdot \frac{P_S^{i,k}}{Y_{S \ominus i}} + \frac{P_S^{i,\ell}}{Y_{S \ominus i}} \cdot Y_{S \oplus k} \right)\X_\ell + \frac{P_S^{i,i}}{Y_{S \ominus i}} \cdot Y_{S \oplus k} X_i  = \frac{1}{Y_{S \oplus k}} \sum_\ell P_{Sk}^{i,\ell} \frac{Y_S}{Y_{S \ominus k} Y_{S \ominus i}}\X_\ell
$$
where $\X_\ell$ is equal to $X_\ell$ or $A_\ell$ and we have applied Lemma \ref{lem:ikj}, noting that $Y_{S \ominus k} Y_{S \oplus k} = Y_{Sk}$.  The factor $ \frac{Y_S}{Y_{S \ominus k} Y_{S \ominus i}}$ does not depend on $j$ so together with Proposition \ref{prop:irred} we have that 
$$\hF_i \propto \sum_{j \not \in Sik} P_{Sk}^{i,j} X_{j} + \sum_{j \in Sik} P_{Sk}^{i,j} A_{j}.$$  The proof of (2) is then completed by applying Proposition \ref{prop:rigid}.

Finally, to establish (3), let $\vec{S}''=(s_1,s_2,\ldots,s_{r-2})$ and suppose $s_{r-1}$ and $s_{r-2}$ are exchangeable in $\vec{S} = (s_1,s_2,\ldots,s_{r-2},s_{r-1},s_r)$.  Then $s_{r-1}$ and $s_r$ are not strongly connected via $S''$, so it follows that one of $\hF_{s_{r-1}}$ and $\hF_{s_r}$ does not involve the other's cluster variable ($X_{s_r-1}$ or $X_{s_r}$).
By Theorem \ref{thm:comm} we have $\mu_{s_r} \mu_{s_{r-1}}(t_{\vec{S}''}) = \mu_{s_{r-1}} \mu_{s_{r}}(t_{\vec{S}''})$.  This together with the inductive hypothesis completes the proof of (3).
\end{proof}

\subsection{The internal exchange polynomials}
\label{ss:internal}

\begin{prop} \label{prop:Ymut}
The exchange Laurent polynomials $\hF_i$ of $t_{\vec{S}}$ for $i \in S - S^{\max}$ agree with those of $t_\s$ defined in \eqref{E:hFi}.
\end{prop}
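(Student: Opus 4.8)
The plan is to reduce the internal case to a single mutation and then imitate the substitution computation already carried out for external vertices in the proof of Proposition~\ref{prop:spec}. Fix $i \in S - S^{\max}$, set $j = i^+$ and $R = S_j - \{i,j\}$. The first observation is that the target expression is \emph{local} to $S_j$: by Lemma~\ref{lem:Yex} we have $\hF_i^\s = Y_{R\oplus i}Y_{R\oplus j}$, and since $R\oplus i = S_i$ (the strongly connected component of $Ri = S_j - j$ containing $i$) this equals $Y_{S_i}Y_{R\oplus j}$, an expression involving only $Y_I$ with $I \subseteq S_j$, each of which lies in $\L(t_\s)$ by Corollary~\ref{C:laurent}. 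I would therefore first argue that it suffices to prove the claim in the seed whose support is exactly $S_j$, so that $j$ is the unique maximal element: passing from this seed up to $t_\s$ consists of activating vertices $m \notin S_j$, and once $\hF_i = \hF_i^\s$ is known there — an expression not involving any cluster variable indexed outside $S_j$ — none of these later activations changes $F_i$, because $\mu_m$ alters $F_i$ only when the cluster variable at $m$ occurs in $F_i$. This uses the rigidity statement Proposition~\ref{prop:rigid}.

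So assume $S = S_j$ with $j$ the unique maximal element. Because $j$ covers $i$ in $\preceq_\s$, no element is forced strictly between them, so by Lemma~\ref{lem:poset} and Lemma~\ref{lem:exc} (together with Proposition~\ref{prop:spec}(3)) I may choose the activation sequence so that $j$ is activated last and $i$ immediately before it. Thus $t_\s = \mu_j(t')$, where $t' = t_{\vec S'}$ has support $T = S - \{j\} = Ri$, the vertex $i$ is maximal in $t'$ with $S_i = R\oplus i$ and cluster variable $Y_{S_i}$, and $j$ is external with cluster variable $X_j$. By Proposition~\ref{prop:spec}(2) the exchange Laurent polynomials of $t'$ at $i$ and $j$ are
\begin{equation*}
\hF_i' = \frac{\sum_{\ell} P_T^{i,\ell}\,\X_\ell}{Y_{T\ominus i}}, \qquad \hF_j' = \frac{\sum_{\ell}P_T^{j,\ell}\,\X_\ell}{Y_{T\ominus j}},
\end{equation*}
where $\X_\ell$ is $X_\ell$ or $A_\ell$, and $X_j' = \hF_j'/X_j = Y_{T\oplus j} = Y_S$ by Lemma~\ref{lem:maxmut}.

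The heart of the proof is the computation of $\mu_j$ on $F_i$. Since $S_j$ is strongly connected there is a directed path from $i$ to $j$ with intermediate vertices in $T$, which is exactly what forces $S_i$ to be absorbed when $j$ is activated (i.e.\ what makes $i^+=j$); hence $X_j$ occurs in $F_i$ and the mutation genuinely changes it. Working only up to a monomial factor in the cluster variables of $t_\s$ — all that is needed in view of Proposition~\ref{prop:rigid} — Remark~\ref{rem:subs} lets me compute the new $\hF_i$ by substituting $X_j \leftarrow \hF_j'|_{Y_{S_i}=0}/X_j'$ directly into $\hF_i'$. Imposing $Y_{S_i}=0$ in $\hF_j'$ discards precisely the paths from $j$ that avoid $S_i$, for along such a path $p$ the component $S_i$ survives in $T - p$ and contributes the factor $Y_{S_i}$; the surviving terms thus record paths from $j$ into $S_i$, which is the combinatorial content making Lemma~\ref{lem:ikj} applicable with $k=i$. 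After the substitution I would collapse the resulting sum exactly as in the external case: Lemma~\ref{lem:ikj} merges the two path factors $P_T^{i,\ell}$ and the surviving part of $\hF_j'$, and the reorganized expression turns, via the Dodgson identity of Lemma~\ref{lem:Yex}, into $Y_{R\oplus i}Y_{R\oplus j} = \hF_i^\s$ up to a monomial. A final appeal to Proposition~\ref{prop:rigid} upgrades this proportionality to the equality $\hF_i = \hF_i^\s$.

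I expect the main obstacle to be the bookkeeping in this last step: after imposing $Y_{S_i}=0$, correctly identifying which paths and which $Y$-factors survive, and tracking the monomial denominators $Y_{T\ominus i}$, $Y_{T\ominus j}$ together with the factor $Y_S$ from $X_j'$, so that the cancellations match the shape of Lemma~\ref{lem:Yex}. Lining up the signs and the exponents of the surviving $Y$-monomials — rather than the algebra itself — is where the care is required; the reduction to support $S_j$ and the appeal to rigidity are routine once this local substitution computation is in hand.
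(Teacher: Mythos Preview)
Your overall strategy matches the paper's: reduce to the case $S=S_j$ (using the locality of $\hF_i^\s$ and commutation with activations outside $S_j$), pass to the seed $t'$ in which $j$ is the last element activated, read off $\hF'_i$ and $\hF'_j$ from Proposition~\ref{prop:spec}, perform the mutation substitution, and then invoke Proposition~\ref{prop:rigid}. The reduction and the setup are essentially identical to the paper's.

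The gap is in the substitution step itself, which is not merely monomial bookkeeping. After expanding $\hF'_j$ via Lemma~\ref{lem:ikj} (with $k=i$, to isolate where $Y_{S_i}=Y_{R\oplus i}$ appears) and setting $Y_{S_i}=0$, one obtains
\[
\hF'_j|_{Y_{S_i}=0}=\frac{P_R^{j,i}}{Y_R}\sum_{\ell\neq j}P_R^{i,\ell}\,\X_\ell.
\]
Substituting $X_j\leftarrow \hF'_j|_{Y_{S_i}=0}/Y_{S_j}$ into $\hF'_i$ yields, up to monomials in $\L(t_\s)$, the \emph{product}
\[
\bigl(Y_{Rij}\,Y_R+P_R^{i,j}P_R^{j,i}\bigr)\cdot\sum_{\ell\neq j}P_R^{i,\ell}\,\X_\ell.
\]
The second factor is not a unit; it is exactly a common factor with $\hF'_j|_{Y_{S_i}=0}$ and must be \emph{divided out} as part of the LP mutation procedure (the passage from $G$ to $H$ in \eqref{E:G}). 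Only after this removal does one get $\hF_i\propto Y_{Rij}Y_R+P_R^{i,j}P_R^{j,i}$, which is the target. Your plan instead expects Lemma~\ref{lem:ikj} to ``merge the two path factors'' and Lemma~\ref{lem:Yex} to produce $Y_{R\oplus i}Y_{R\oplus j}$ directly; neither of these eliminates the extraneous $\sum_\ell P_R^{i,\ell}\X_\ell$. The crucial idea you are missing is that the substituted expression \emph{factors}, and the LP mutation rule (not any determinantal identity) is what strips off the spurious factor.
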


Together with Proposition \ref{prop:spec} we thus have $t_{\vec{S}} = t_\s$.

\begin{proof}
Fix $i \in S - S^{\max}$ and set $j = i^+$ and $R = S_j -\{i,j\}$.  Note that $S_i = R \oplus i$ and $S_j = Rij = Ri \oplus j$. 

It suffices to show that the statement is true when $j$ is the unique maximal element of $\s$, so that $S_j = S$.  To see this, observe by Lemma \ref{lem:exc} and Proposition \ref{prop:spec} that $t_{\vec{S}}$ can be created by first activating all the elements in $S_j$ (in some order), and then activating elements outside of $S_j$ in some order.  But by Proposition \ref{prop:Yrat} the expression $\frac{Y_{Rij} Y_{R} + P_R^{i,j} P_R^{j,i}}{Y_{R \ominus i} Y_{R \ominus j}}$ can be expressed purely in terms of the $Y$ variables inside $S_j$, and does not depend on $X$ variables. Then Theorem \ref{thm:comm} states that activation of variables outside $S_j$ commutes with mutation of $Y_{S_i}$, and in particular activation of variables outside $S_j$ preserves the value of $\hF_i$.  We shall thus assume from now on that $S = S_j$.


Let $t' = t_{\vec{S}'}$ be the cluster with $i$  maximal such that mutating at $j$ gives $t  = t_{\vec{S}}$, so we have $S' \cup \{j\} = S = S_j$.  Denote by $F'_i$ the exchange polynomials of the cluster $t'$.  By Proposition \ref{prop:spec},
$$\hF'_i = \frac{\sum_{\ell \not \in Ri} P_R^{i,\ell} X_{\ell} + \sum_{\ell \in Ri} P_R^{i,\ell} A_{\ell}}{Y_{R \ominus i}}$$
and 
$$\hF'_j = \frac{\sum_{\ell \not \in Rij} P_{Ri}^{j,\ell} X_{\ell} + \sum_{\ell \in Rij} P_{Ri}^{j,\ell} A_{\ell}}{Y_{Ri \ominus j}}.$$
Use the notation 
\begin{equation}\label{E:Xtilde}
\X_{\ell} = 
\begin{cases}
X_{\ell} & \text{if $\ell \not \in R \cup i \cup j$;}\\
A_{\ell} & \text{otherwise.}
\end{cases}
\end{equation}
Then noting that $Ri \ominus j = \emptyset$, we have
$$\hF'_j = \frac{\sum_{\ell} P_{Ri}^{j,\ell} \X_{\ell}}{Y_{(Ri) \ominus j}} = \sum_{\ell} P_{Ri}^{j,\ell} \X_{\ell}.$$
We can use Lemma \ref{lem:ikj} to obtain
$$\hF'_j = \frac{\sum_{\ell} P_{R}^{j,\ell} Y_{R i} \X_{\ell} + \sum_{\ell \neq j} P_{R}^{j,i} P_{R}^{i,\ell} \X_{\ell}}{Y_{R}}.$$
Applying Proposition \ref{prop:Yrat} to this expression, we can write it as a rational function in the cluster variables of $t_{\s'}$.  The variable $Y_{S_i} = Y_{Ri}$ does not appear in $P^{j,\ell}_R$,  $P_{R}^{j,i}$, or $P_{R}^{i,\ell}$ so we see that setting $Y_{S_i} = 0$ in $\hF'_j$ gives 
$$\frac{\sum_{\ell \neq j} P_{R}^{j,i} P_{R}^{i,\ell} \X_{\ell}}{Y_{R}}.$$
Substituting for $X_j$ in $\hF'_i$ gives $$(\hF'_i)_{X_j \longleftarrow \frac{\sum_{\ell} P_{R}^{j,i} P_{R}^{i,\ell} \X_{\ell}}{Y_{R} Y_{S_j}}} = \frac{1}{Y_{R \ominus i}} \left( P_R^{i,j} \frac{\sum_{\ell} P_{R}^{j,i} P_{R}^{i,\ell} \X_{\ell}}{Y_{R} Y_{S_j}} + \sum_{\ell} P_{R}^{i,\ell} \X_{\ell}\right),$$ where throughout the summation is over $\ell \not = j$.
Dividing by the common factor $\sum_{\ell} P_{R}^{i,\ell} \X_{\ell}$ with $\hF'_j|_{Y_{S_i}=0}$ and using Proposition \ref{prop:irred}, we obtain
$$\hF_i \propto Y_{R ij} Y_{R} + P_R^{i,j} P_R^{j,i}.$$
The claim then follows from Proposition \ref{prop:rigid}.

\end{proof}


\subsection{Proof of Theorem \ref{thm:main}}
\label{ss:proof}
Let $t_\s$ be one of the clusters in $\A(t_\emptyset)$ that have been studied in Propositions \ref{prop:spec} and \ref{prop:Ymut}, and $t' = \mu_i(t_\s)$ be the cluster obtained by mutation at $i$, where $i \in S - S^{\max}$.  Our aim is to show that $t' = t_{\mu_i(\s)}$.  Set $\s' = \mu_i(\s)$.

Proposition \ref{prop:Ymut} and Lemma \ref{lem:Yex} imply that the cluster variables of $t'$ and $t_{\s'}$ are the same. It remains to check that the exchange polynomials $F'_k$ of $t'$ are also correct, and for that it suffices to check that the $\hF'_k$'s are correct.  Just as in the proof of Proposition \ref{prop:Ymut}, we may assume that $S = S_{i^+}$, since all further activating mutations commute with the one exchanging $Y_{S_i}$ for $Y_{(S - \{i\}) \oplus i^+}$. Let $j = i^+$ as before, and let $K \subset S$ be the set of $k$ such that $k^+ = i$ or $k^+ = j$. Since $\hF_i$ does not depend on external $X$ variables, its mutation commutes with theirs by Theorem \ref{thm:comm}, and thus does not change the corresponding $\hF$. The same is true for elements of $r \in S$ different from $j$ and not in $K$: their exchange polynomial $F_r$ does not depend on $Y_{S_i}$ and also is not equal to $F_i$ (even up to a unit in $\L(t_\s)$). 

Thus we only need to check that $\hF_j$ and $\hF_k$ for $k \in K$ mutate correctly. 
Set $R = S_j - \{i,j\}$ as before. Setting $Y_{Rij}=0$ inside 
$$\hF_i = \frac{Y_{Rij} Y_{R} + P_R^{i,j} P_R^{j,i}}{Y_{R \ominus i} Y_{R \ominus j}}$$
we obtain
$$\frac{P_R^{i,j} P_R^{j,i}}{Y_{R \ominus i} Y_{R \ominus j}},$$ since by Proposition \ref{prop:Yrat} this expression does not depend on $Y_{Rij}$.
Plugging 
$$Y_{R \oplus i} \longleftarrow \frac{P_R^{i,j} P_R^{j,i}}{Y_{R \ominus i} Y_{R \ominus j} Y_{R \oplus j}}$$
in 
$$\hF_j = \sum_{\ell} P_{Ri}^{j,\ell} \X_{\ell} = \frac{\sum_{\ell} P_{R}^{j,\ell} Y_{Ri} \X_{\ell} + \sum_{\ell \neq j} P_{R}^{j,i} P_{R}^{i,\ell} \X_{\ell}}{Y_{R}}$$
where we are using the notation of \eqref{E:Xtilde}, we get 
$$\frac{\sum_{\ell} P_{R}^{j,\ell} \frac{P_R^{i,j} P_R^{j,i}}{Y_{R \ominus i} Y_{R j}} Y_{R \ominus i} \X_{\ell} + \sum_{\ell} P_{R}^{j,i} P_{R}^{i,\ell} \X_{\ell}}{Y_{R}} =
\frac{\frac{P_R^{i,j} P_R^{j,i}}{Y_{Rj}} \sum_{\ell} P_{R}^{j,\ell}  \X_{\ell} + P_{R}^{j,i} \sum_{\ell} P_{R}^{i,\ell} \X_{\ell}}{Y_{R}}.$$
Canceling out the common factor $P_{R}^{j,i}$ we get
$$ \hF'_j \propto \frac{\frac{P_R^{i,j}}{Y_{Rj}} \sum_{\ell} P_{R}^{j,\ell}  \X_{\ell} + \sum_{\ell} P_{R}^{i,\ell} \X_{\ell}}{Y_{R}}=
\frac{\sum_{\ell} P_{Rj}^{i,\ell}  \X_{\ell}}{Y_{R j}}$$
by Lemma \ref{lem:ikj} (for the last equality) and Proposition \ref{prop:irred}.  We then see that $\hF'_j = \hF^{\s'}_j$ by Proposition \ref{prop:rigid}.

It remains to check what happens with $\hF_k$, for $k \in K$.  Let $T = S - i -k -j = R - k$.   We have
\begin{equation}\label{E:Fi}\hF_i = \frac{Y_{T ijk} Y_{Tk} + P_{Tk}^{i,j} P_{Tk}^{j,i}}{Y_{Tk \ominus i} Y_{Tk \ominus j}}.
\end{equation}



\noindent {\bf Case $k^+=j$:}
Suppose first that $k^+ = j$, so that $k$ and $i$ are not strongly connected via $T$, and thus $P_T^{i,k}P_T^{k,i} = 0$.  We have
\begin{equation}\label{E:Fk} \hF_k= \frac{Y_{Tikj} Y_{Ti} + P_{Ti}^{k,j} P_{Ti}^{j,k}}{Y_{Ti \ominus k} Y_{Ti \ominus j}}
\end{equation}
Applying Lemma \ref{lem:ikj} to $P_{Ti}^{k,j}$ and $P_{Ti}^{j,k}$ gives
$$
P_{Ti}^{k,j}Y_T = P^{k,i}_TP^{i,j}_T + P^{k,j}_T Y_{Ti} \qquad P_{Ti}^{j,k}Y_T = P^{j,i}_TP^{i,k}_T + P^{j,k}_T Y_{Ti}.
$$
Using $Y_{Ti} = Y_{T \ominus i}Y_{T \oplus i}$ and $Y_{Ti \ominus k} = Y_{(T\ominus i) \ominus k} Y_{T \oplus i}$ we get
\begin{equation}\label{E:Fk2}
\hF_k = \frac{Y_T^2Y_{S}Y_{T \ominus i} + Y_{T \ominus i} (P^{k,i}_TP^{i,j}_TP^{j,k}_T +P^{j,i}_TP^{i,k}_TP^{k,j}_T) + P^{k,j}_TP^{j,k}_T Y_{T \ominus i}Y_{Ti}}{Y_{(T\ominus i) \ominus k}Y_{Ti \ominus j}Y_T^2}
\end{equation}

\noindent {\bf Subcase $P^{k,j}_TP^{j,k}_T = 0$:} Thus $k$ and $j$ are not strongly connected via $T$ and in $\s'$, we have $k^+ = i$.  In this subcase, $Y_{S_i}$ only occurs in the denominator of $\hF_k$, so this means $F_k$ does not involve $Y_{S_i}$.  It follows that $F'_k = F_k$.  If $t' = t_{\s'}$, according to Theorem \ref{thm:Ymut} we should have
\begin{equation}\label{E:Fprimek}
\hF'_k = \frac{Y_T^2Y_{Tikj} Y_{Tj} + P_{Tj}^{k,i} P_{Tj}^{i,k}}{Y_{Tj \ominus k} Y_{Tj \ominus i}} = \frac{Y_{S}Y_{T \ominus j} + Y_{T \ominus j} (P^{k,i}_TP^{i,j}_TP^{j,k}_T +P^{j,i}_TP^{i,k}_TP^{k,j}_T)}{Y_T^2Y_{(T\ominus j) \ominus k}Y_{Tj \ominus i}}
\end{equation}

 If $i$ and $j$ are strongly connected via $T$, then $Tj \ominus i = Ti \ominus j$, and we see that \eqref{E:Fprimek} and \eqref{E:Fk2} differ (multiplicatively) by factors of $Y_V$ for variables $V$ which are present in both $t_\s$ and $t'$.  This implies that up to a monomial factor \eqref{E:Fk2} is the correct formula for $\hF'_k$ in $t'$, and we may now apply Proposition \ref{prop:rigid}.

If $i$ and $j$ are not strongly connected via $T$.  Then $Y_{Tj \ominus i}$ has a factor of the new cluster variable $Y_{T \oplus j}$ in $t'$ while $Y_{Ti \ominus j}$ has a factor of the old cluster variable $Y_{T \oplus i}$ in $t_\s$.  These factors are of course not present in the corresponding non-Laurent exchange polynomials $F_k$.  In any case, we still conclude that \eqref{E:Fk2} is the correct formula for $\hF'_k$ in $t'$.

\smallskip
\noindent
{\bf Subcase $P^{k,j}_TP^{j,k}_T \neq 0$:}
Thus $j$ and $k$ are strongly connected via $T$, and in $\s'$, we have $k^+ = j$.  In the denominator of \eqref{E:Fi}, we have $Y_{T k \ominus i} = Y_{T \oplus k} Y_{(Tk \ominus k) \ominus i}$ but $Y_{Tk \ominus j}$ does not involve the variable $Y_{T \oplus k}$.   Applying Lemma \ref{lem:ikj}, we get
$$
\hF_i = \frac{Y_T^2 Y_{Tijk}Y_{T\ominus k} + Y_{T\ominus k}(P^{i,k}_TP^{k,j}_TP^{j,i}_T + P^{i,j}_TP^{j,k}_TP^{k,i}_T) + P^{i,j}_TP^{j,i}_TY_{Tk}Y_{T\ominus k}}{Y_{(Tk \ominus k) \ominus i}Y_{Tk \ominus j}Y_T^2}
$$

None of the $P^{a,b}_T$ involve $Y_{T \oplus k}$ when expressed as an element of $\L(t_\s)$, so substituting $Y_{T \oplus k} = 0$, we obtain
$$
\hF_i|_{Y_{T \oplus k} =0}/Y_{Tk \oplus j}= \frac{ Y_T^2 Y_{Tijk}Y_{T\ominus k} + Y_{T\ominus k}(P^{i,k}_TP^{k,j}_TP^{j,i}_T + P^{i,j}_TP^{j,k}_TP^{k,i}_T)}{Y_{(Tk \ominus k) \ominus i}Y_{Tk \ominus j}Y_T^2Y_{Tk \oplus j}}
$$
Substituting this last expression for $Y_{T \oplus i} = Y_{S_i}$ in \eqref{E:Fk2} (again we use the fact that the expression in \eqref{E:Fk2} is linear in $Y_{S_i}$), we obtain, up to an overall monomial factor
\begin{align*}
& Y_T^4 Y_SY_{Tk} Y_{(Tk \ominus k) \ominus i} + ZY_T^2Y_{Tk} Y_{(Tk \ominus k) \ominus i} + P^{k,j}_TP^{j,k}_T Y_SY^3_T Y_{(Tk \ominus k)\ominus i} +P^{k,j}_TP^{j,k}_T Y_T Y_{(Tk \ominus k)\ominus i} Z \\
&= (Y_{Tkj}Y_T + P^{k,j}_TP^{j,k}_T)(Y_T^2Y_S+Z) Y_TY_{(Tk \ominus k)\ominus i}
\end{align*}
where $Z = P^{i,k}_TP^{k,j}_TP^{j,i}_T + P^{i,j}_TP^{j,k}_TP^{k,i}_T$, and we have used the equalities $Y_{T} Y_{(Tk \ominus k) \ominus i} = Y_{T \ominus i} Y_{T \ominus k}$ and $Y_{Tk} = Y_{Tk \ominus j} Y_{Tk \oplus j}$.  The factor $(Y_T^2Y_S+Z)$ is a common factor with $\hF_i|_{Y_{T \oplus k} =0}$, so we conclude from Proposition \ref{prop:irred} that 
$$
\hF'_k \propto Y_{Tkj}Y_T + P^{k,j}_TP^{j,k}_T.
$$
The claimed statement for $\hF'_k$ then follows from Proposition \ref{prop:rigid}.

\smallskip
\noindent {\bf Case $k^+=i$:}
In this case we have 
\begin{equation}\label{E:Fk3}
\hF_k = \frac{Y_{Tik} Y_{T} + P_{T}^{i,k} P_{T}^{k,i}}{Y_{T \ominus i} Y_{T \ominus k}}.
\end{equation}

\smallskip
\noindent {\bf Subcase $P^{j,k}_TP^{k,j}_T = 0$:}
In this subcase, we have $k^+ = i$ in $t_{\s'}$.

Applying Lemma \ref{lem:ikj} to \eqref{E:Fi}, and using $Y_{Tk \ominus j} = Y_{(Tk \ominus j) \ominus k} Y_{T \oplus k} $ we get
\begin{equation}\label{E:Fi2}
\hF_i = \frac{Y_T^2 Y_{Tijk}Y_{T\ominus k} + Y_{T\ominus k}(P^{i,k}_TP^{k,j}_TP^{j,i}_T + P^{i,j}_TP^{j,k}_TP^{k,i}_T) + P^{i,j}_TP^{j,i}_TY_{Tk}Y_{T\ominus k}}{Y_{(Tk \ominus j) \ominus k}Y_{Tk \ominus i}Y_T^2}
\end{equation}

Plugging $Y_{T \oplus k} = 0$ into \eqref{E:Fi2} we get
$$\hF_i|_{Y_{T \oplus k} =0} = \frac{Y_T^2Y_{Tijk} Y_{T \ominus k} + Y_{T \ominus k}Z}{Y_T^2Y_{Tk \ominus i} Y_{(T \ominus k) \ominus j}}$$
where $Z = P^{i,k}_TP^{k,j}_TP^{j,i}_T + P^{i,j}_TP^{j,k}_TP^{k,i}_T$.
Now substituting 
$$\frac{\hF_i|_{Y_{T \oplus k} =0}}{Y_{Tk \oplus j}} = \frac{Y_T^2Y_{Tijk} Y_{T \ominus k} + Y_{T \ominus k}Z}{Y_{Tk \oplus j}Y_T^2Y_{Tk \ominus i} Y_{(T \ominus k) \ominus j}}$$ for $Y_{S_i} = Y_{Tk \oplus i}$ in $\hF_k$, we get
$$
\frac{Y_T^2Y_{Tijk} Y_{T \ominus k} + Y_{T \ominus k}Z}{Y_T^2Y_{Tk \ominus i} Y_{(T \ominus k) \ominus j}Y_{Tk \oplus j}} Y_{Tk \ominus i} Y_{T} + P_{T}^{i,k} P_{T}^{k,i}
$$
up to a monomial factor in $\L(t')$.  Now $Y_{(T \ominus k) \ominus j}Y_{Tk \oplus j} = Y_{Tj \ominus k}$ and $Y_{Tj \ominus k} Y_T = Y_{Tj} Y_{T \ominus k}$ so we have
\begin{align*}
\hF'_k &\propto Y_T^2Y_{Tijk} Y_{T \ominus k} + Y_{T \ominus k}Z + Y_{Tj \ominus k}P_{T}^{i,k} P_{T}^{k,i} Y_T\\
& \propto Y_T^2Y_{Tijk} Y_{Tj} + Y_{Tj}Z + P_{T}^{i,k} P_{T}^{k,i} Y_{Tj}^2
\\& = Y_T^2 (Y_{Tijk}Y_{Tj} + P^{i,k}_{Tj}P^{k,i}_{Tj})
\end{align*}
where in the last step we have used Lemma \ref{lem:ikj}.  But this last expression, is up to a monomial in $\L(t')$ equal to the expression for $\hF'_k$ predicted by \eqref{E:hFi}.  The claimed statement for $\hF'_k$ then follows from Proposition \ref{prop:rigid}.

\smallskip
\noindent 
{\bf Subcase $P^{j,k}_TP^{k,j}_T \neq 0$}
In this case, we have $k^+ = j$ in $t_{\s'}$.  In \eqref{E:Fi} neither factor in the denominator is divisible by $Y_{T\oplus k}$, so substituting $Y_{T \oplus k} = 0$ into $\hF_i$, the term $Y_{Tijk} Y_{Tk}$ is killed since $Y_{Tk} = Y_{T \oplus k} Y_{T \ominus k}$.  For the other term $P_{Tk}^{i,j} P_{Tk}^{j,i}$ we apply Lemma \ref{lem:ikj} and obtain
$$\hF_i|_{Y_{T\oplus k} = 0} = \frac{P_{T}^{i,k} P_{T}^{k,j} P_{T}^{j,k} P_{T}^{k,i}}{Y_T^2 Y_{Tk \ominus i} Y_{Tk \ominus j}}.$$

Substituting $\frac{P_{T}^{i,k} P_{T}^{k,j} P_{T}^{j,k} P_{T}^{k,i}}{Y_T^2 Y_{Tk \ominus i} Y_{Tk \ominus j} Y_{Tk \oplus j}}$ for $Y_{T k \oplus i}$ into \eqref{E:Fk3} (using $Y_{Tik} = Y_{Tk \oplus i} Y_{Tk \ominus i}$), we obtain up to a monomial factor
$$ \frac{P_{T}^{i,k} P_{T}^{k,j} P_{T}^{j,k} P_{T}^{k,i}}{Y_T^2 Y_{Tk \ominus i} Y_{Tk \cup j}} Y_{Tk \ominus i} Y_{T} + P_{T}^{i,k} P_{T}^{k,i}.$$ Killing the common factor $P_{T}^{i,k} P_{T}^{k,i}$ with $\hF_i|_{Y_{T\oplus k} = 0}$ and bringing to a common denominator, we observe that this is up to a monomial factor the exchange polynomial $\hF'_k$ in the cluster $t_{\s'}$ given by \eqref{E:hFi}.  The required result then follows from Proposition \ref{prop:rigid}.


\section{Examples}
\label{sec:examples}
\subsection{Paths}

Consider  the path $P_n$ on $[n]$ that consists of edges $i \longrightarrow i+1$ for $i \in [n-1]$ and $i \longrightarrow i-1$ for $i \in [2,n]$.  The LP algebra $\A_{P_n}$ is the {\it path LP algebra}.  Let $\A'_{P_n}$ be the rank $n-1$ LP algebra defined in Section \ref{ss:graphassoc}.  The seeds of $\A'_{P_n}$ are exactly the seeds of $\A_{P_n}$ that contain $Y_{[n]}$, and with $Y_{[n]}$ removed.

The strongly connected components $P_n$ are exactly the intervals $\{[a,b] \mid 1 \leq a \leq b \leq n\}$.  There is a bijection between maximal nested families of $P_n$ with support $[n]$, and triangulations of an $(n+1)$-gon. The bijection is obtained by mapping a set $[a,b]$ into the diagonal which cuts vertices $a$ through $b$ away from the rest of the vertices. An example is given in Figure \ref{fig:lin2}.  The complex of partial triangulations of the $n+1$-gon give a simplicial complex dual to the associahedron.

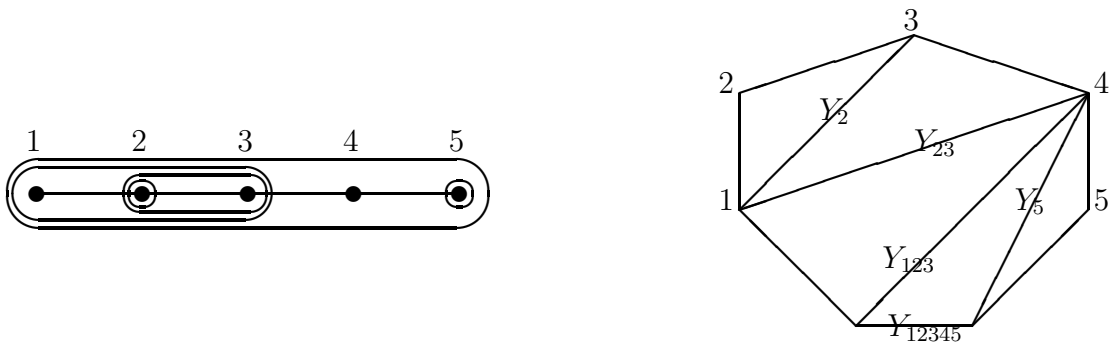
\begin{figure}[ht]
    \begin{center}
\setlength{\unitlength}{2pt}
\begin{picture}(240,60)(-10,0)
\thicklines
\put(5,30){\circle*{3}}
\put(3,38){$1$}
\put(5,30){\line(1,0){80}}
\put(25,30){\circle*{3}}
\put(23,38){$2$}
\put(45,30){\circle*{3}}
\put(43,38){$3$}
\put(65,30){\circle*{3}}
\put(63,38){$4$}
\put(85,30){\circle*{3}}
\put(83,38){$5$}
\put(25,30){\oval(5,5)}
\put(35,30){\oval(27,7)}
\put(25,30){\oval(49,10)}
\put(85,30){\oval(5,5)}
\put(45,30){\oval(91,13)}

\put(160,5){\line(1,0){22}}
\put(160,5){\line(-1,1){22}}
\put(182,5){\line(1,1){22}}
\put(171,60){\line(3,-1){33}}
\put(171,60){\line(-3,-1){33}}
\put(138,27){\line(0,1){22}}
\put(204,27){\line(0,1){22}}
\put(138,27){\line(1,1){33}}
\put(204,49){\line(-1,-2){22}}
\put(204,49){\line(-1,-1){44}}
\put(204,49){\line(-3,-1){66}}
\put(134,27){$1$}
\put(134,49){$2$}
\put(169,61){$3$}
\put(205,49){$4$}
\put(205,27){$5$}
\put(153,44){$Y_2$}
\put(171,38){$Y_{23}$}
\put(165,16){$Y_{123}$}
\put(190,27){$Y_{5}$}
\put(166,3){$Y_{12345}$}

\end{picture}
\qquad
    \end{center} 
    \caption{The nested collection $\{2,23,123,5,12345\}$ on the left corresponds to the triangulation on the right.
}
    \label{fig:lin2}
\end{figure}

\begin{theorem}\label{thm:path}
Let $t_\s$ be a seed of $\A'_{P_n}$.  Suppose $S_i = I \cup \{i\} \cup J \in \s$, where $I, J \in \s \cup \{\emptyset\}$.  Then the exchange relation for $Y_{S_i}$ has the form
$$Y_{IiJ} Y_{JjK} = Y_{J} Y_{IiJjK} + Y_{I} Y_{K}.$$
where $K \in \s \cup \{\emptyset\}$ is such that $j = i^+$ and $S_j = I \cup \{i\} \cup J \cup \{j\} \cup K$.
\end{theorem}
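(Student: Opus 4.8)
The plan is to recognize Theorem~\ref{thm:path} as a specialization of the internal exchange relation to the path graph $P_n$. Since $P_n$ is strongly connected, $\T = \{[n]\}$, so every seed of $\A'_{P_n}$ has support $[n]$, the only frozen variable is $Y_{[n]}$, and every allowed mutation is an internal mutation at some $i \in S - S^{\max}$. Hence the relevant exchange relation is the second branch of \eqref{E:hFi}, equivalently Lemma~\ref{lem:Yex}: writing $j = i^+$ and $R = S_j - \{i,j\}$, the variable $Y_{S_i}$ is exchanged for the variable $Y_{S_i'}$, where $S_i' = (S_j - \{i\}) \oplus j$, according to
$$
Y_{S_i} Y_{S_i'} = \frac{Y_{Rij} Y_R + P_R^{i,j} P_R^{j,i}}{Y_{R \ominus i} Y_{R \ominus j}}.
$$
I would start from this identity and evaluate every term using the combinatorics of intervals.

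Next I would pin down the combinatorial picture. In $P_n$ the strongly connected subsets are exactly the intervals, so $S_i = IiJ$ is an interval; since $I$ and $J$ are the children of $S_i$ in $\s$, they are the two intervals lying to the left and right of $i$ (either may be empty). The parent $S_j = IiJjK$ is a larger interval, so $j$ must be adjacent to $S_i$; by the left--right reflection symmetry of $P_n$ I may assume $j$ lies immediately to the right, so that $S_i = [a,b]$, $j = b+1$, and $K = [b+2, b']$ with $S_j = [a,b']$. Then $R = I \sqcup J \sqcup K$ is a disjoint union of three intervals separated by the deleted vertices $i$ and $j$, so Lemma~\ref{lem:Yfac} gives $Y_R = Y_I Y_J Y_K$.

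The remaining terms I would compute one at a time. Clearly $Y_{Rij} = Y_{S_j} = Y_{IiJjK}$. For the denominators, adjoining $i$ to $R$ merges $I$ and $J$ into $S_i = IiJ$ while leaving $K$ separate, so $R \oplus i = IiJ$ and $R \ominus i = K$; symmetrically $R \oplus j = JjK$ and $R \ominus j = I$. For the path sums, the linearity of $P_n$ forces a unique simple path from $i$ to $j$, passing through exactly the vertices of $J$; deleting its vertices from $R$ leaves $I \sqcup K$, whence $P_R^{i,j} = Y_{I}Y_{K} = P_R^{j,i}$. Substituting into the exchange relation and cancelling the common factor $Y_I Y_K$ between numerator and denominator yields
$$
Y_{S_i} Y_{S_i'} = Y_J Y_{IiJjK} + Y_I Y_K,
$$
and the internal-mutation recipe identifies $S_i' = (S_j - \{i\}) \oplus j = JjK$, which is exactly the claimed relation.

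The computation is essentially bookkeeping, so the main obstacle is getting that bookkeeping right: correctly matching $R \ominus i$ and $R \ominus j$ to $K$ and $I$, and above all verifying that the path sums $P_R^{i,j}$ and $P_R^{j,i}$ collapse to the single monomial $Y_I Y_K$ (which rests on the uniqueness of paths in a linear graph together with identifying precisely which vertices are deleted). I would also check the degenerate cases in which one or more of $I$, $J$, $K$ is empty, interpreting $Y_\emptyset = 1$, and confirm that the symmetric case $j = a-1$ follows by the reflection symmetry of $P_n$.
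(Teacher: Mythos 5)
Your proposal is correct and takes essentially the same route as the paper: the paper's proof of Theorem \ref{thm:path} is simply the observation that the relation follows immediately from the internal branch of \eqref{E:hFi} (equivalently Lemma \ref{lem:Yex}), and your argument is the detailed verification of that specialization — identifying $R \ominus i = K$, $R \ominus j = I$, $Y_R = Y_IY_JY_K$, and $P_R^{i,j} = P_R^{j,i} = Y_IY_K$ via the unique simple path through $J$. The bookkeeping all checks out, including the cancellation of $Y_IY_K$ against the denominator and the identification of the new variable as $Y_{JjK}$.
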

\begin{proof}
The claim follows immediately from the formula for exchange Laurent polynomials in  \eqref{E:hFi}.
\end{proof}

The {\it {Ptolemy cluster algebra}} of type $A_{n-1}$ is often modeled by by identifying clusters with triangulations of a $[n+1]$-gon and cluster variables with diagonals.  See for example \cite{Sch} for complete details.  In \cite[Section 4]{LP} we explained how certain cluster algebras could be considered as LP algebras.

\begin{corollary}
The LP algebra $\A'_{P_n}$ can be identified with the Ptolemy cluster algebra.  The identification between seeds and between cluster variables is given by  the bijection between maximal nested collections and triangulations.  
\end{corollary}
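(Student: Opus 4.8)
The plan is to construct an explicit map $\Phi$ from $\A'_{P_n}$ to the Ptolemy cluster algebra of type $A_{n-1}$, regarded as an LP algebra via \cite[Section 4]{LP}, and to show it is an isomorphism by matching clusters, cluster variables, and exchange relations. Label the vertices of the polygon underlying the bijection recalled above as $v_0, v_1, \ldots, v_{n+1}$, and send the strongly connected subset $[a,b]$ of $P_n$ to the chord $v_{a-1}v_{b+1}$, which cuts off exactly the vertices $a$ through $b$. Under this rule the proper intervals $[a,b] \neq [n]$ map to the genuine diagonals (the mutable cluster variables), the full interval $[n]$ maps to the boundary edge $v_0 v_{n+1}$, matching the unique frozen variable $Y_{[n]}$ of $\A'_{P_n}$, and the empty subintervals map to the remaining boundary edges, all carrying the value $Y_\emptyset = 1$ exactly as the trivial boundary arcs do in the Ptolemy model (see \cite{Sch}). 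The recalled bijection between maximal nested collections and triangulations then sends the seed $t_\s$ to a triangulation, and I define $\Phi$ on variables by $Y_{[a,b]} \mapsto v_{a-1}v_{b+1}$.

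Because the cluster variables of a normalized LP algebra are obtained from any one seed by iterated mutation, and each mutation is pinned down by its exchange relation, it suffices to check that $\Phi$ carries each single mutation to a diagonal flip and carries each exchange relation to the corresponding Ptolemy relation. The combinatorial heart is therefore to show that the internal mutation $\mu_i(\s)$, which replaces $S_i = I\cup\{i\}\cup J$ by $(S_{i^+}-\{i\})\oplus i^+ = J\cup\{i^+\}\cup K$, is a diagonal flip. After possibly reflecting the path we may read off the linear order recorded in $S_{i^+}=I\cup\{i\}\cup J\cup\{j\}\cup K$ as $I, i, J, j, K$ from left to right, so that $S_i = [a,j-1]$ and $S_{i^+} = [a,b]$ with $I=[a,i-1]$, $J=[i+1,j-1]$, $K=[j+1,b]$. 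Then the chords $\Phi(Y_{IiJ}) = v_{a-1}v_j$ and $\Phi(Y_{JjK}) = v_i v_{b+1}$ are precisely the two diagonals of the quadrilateral with vertices $v_{a-1}, v_i, v_j, v_{b+1}$, whose four sides $v_{a-1}v_i$, $v_i v_j$, $v_j v_{b+1}$, $v_{b+1}v_{a-1}$ are the images of $Y_I$, $Y_J$, $Y_K$, and $Y_{IiJjK}$. Hence mutation at $i$ is exactly the flip of this quadrilateral's diagonal, so $\Phi$ intertwines mutation with flips.

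It then remains to match the exchange relations. For the quadrilateral $ABCD = v_{a-1}v_i v_j v_{b+1}$ Ptolemy's relation is $AC\cdot BD = AB\cdot CD + BC\cdot DA$, which under $\Phi$ reads
\[ Y_{IiJ}\,Y_{JjK} = Y_I\,Y_K + Y_J\,Y_{IiJjK}, \]
the two pairs of opposite sides producing the two terms on the right; this is identical to the relation of Theorem \ref{thm:path}. When any of $I$, $J$, $K$ is empty, the corresponding side degenerates to a boundary edge with $Y_\emptyset = 1$, in agreement with the boundary arcs of the Ptolemy algebra. Fixing a base triangulation to identify the two ambient fields, the fact that $\Phi$ is a bijection on cluster variables and clusters, intertwines mutation with flips, and carries each exchange relation to the matching Ptolemy relation shows that it identifies the two algebras as subrings of the ambient field and is an isomorphism of LP algebras, which is the asserted identification.

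I expect the main obstacle to lie in the combinatorial step: extracting from the abstract nested-collection data the precise quadrilateral of the triangulation that surrounds the flipped diagonal, together with the cyclic order of its sides, so that the pairing of opposite sides in Ptolemy's relation lines up with the pairing $\{Y_I, Y_K\}$, $\{Y_J, Y_{IiJjK}\}$ of Theorem \ref{thm:path} (and so that the left/right orientation encoded by $S_{i^+}=IiJjK$ is handled consistently). Once the quadrilateral $v_{a-1}v_i v_j v_{b+1}$ is correctly pinned down, the comparison of relations and the bookkeeping for empty subintervals are routine.
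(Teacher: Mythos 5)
Your proposal is correct and matches the paper's intended argument: the paper deduces this corollary directly from Theorem \ref{thm:path} (whose relation $Y_{IiJ}Y_{JjK}=Y_JY_{IiJjK}+Y_IY_K$ is precisely the Ptolemy relation for the quadrilateral $v_{a-1}v_iv_jv_{b+1}$ you identify), leaving the bookkeeping of chords, flips, and boundary arcs implicit. Your explicit verification that internal mutation is a diagonal flip and that the frozen variable $Y_{[n]}$ and the empty subintervals correspond to boundary edges is exactly the fleshed-out version of that argument.
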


\subsection{Cycles}

Consider the cycle $C_n$ on $[n]$ that consists of edges $i \longrightarrow i \pm 1 \mod n$.  The LP algebra $\A_{C_n}$ is the {\it cycle LP algebra}.  Let $\A'_{C_n}$ be the rank $n-1$ LP algebra defined in Section \ref{ss:graphassoc}.  The seeds of $\A'_{C_n}$ are exactly the seeds of $\A_{C_n}$ that contain $Y_{[n]}$, and with $Y_{[n]}$ removed.

The strongly connected components $P_n$ are exactly the cyclic intervals $\{[a,b]\}$.  There is a bijection between maximal nested families of $C_n$ with support $[n]$ and centrally symmetric triangulations of a $2n$-gon. The bijection is obtained by mapping a set ${[a,b]}$ (indices taken modulo $n$) into the diagonal which cuts vertices $a$ through $b$ away from the rest of vertices. An example is given in Figure \ref{fig:lin3} where the labeling of the vertices of the $2n$-gon is shown.  The complex of partial centrally symmetric triangulations of the $2n$-gon give a simplicial complex dual to the cyclohedron.

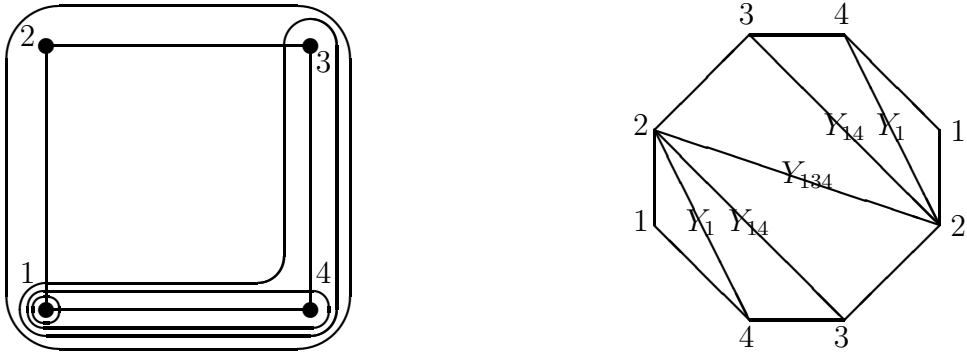
\begin{figure}[ht]
    \begin{center}
\setlength{\unitlength}{2pt}
\begin{picture}(240,60)(10,0)
\thicklines
\put(45,5){\circle*{3}}
\put(40,10){$1$}
\put(45,5){\line(1,0){50}}
\put(45,5){\line(0,1){50}}
\put(95,55){\line(-1,0){50}}
\put(95,55){\line(0,-1){50}}
\put(45,55){\circle*{3}}
\put(40,55){$2$}
\put(95,5){\circle*{3}}
\put(96,50){$3$}
\put(95,55){\circle*{3}}
\put(96,10){$4$}
\put(45,5){\oval(5,5)}
\put(70,5){\oval(57,7)}
\put(70,5){\oval(60,10)[b]}
\put(70,5){\oval(60,10)[tl]}
\put(70,15){\oval(40,10)[br]}
\put(85,30){\oval(10,40)[br]}
\put(95,30){\oval(10,60)[r]}
\put(95,30){\oval(10,60)[t]}
\put(70,30){\oval(65,65)}

\put(178,3){\line(1,0){18}}
\put(178,3){\line(-1,1){18}}
\put(160,39){\line(0,-1){18}}
\put(160,39){\line(1,-2){18}}
\put(160,39){\line(1,-1){36}}
\put(160,39){\line(3,-1){54}}
\put(160,39){\line(1,1){18}}
\put(178,57){\line(1,-1){36}}
\put(178,57){\line(1,0){18}}
\put(196,57){\line(1,-1){18}}
\put(196,57){\line(1,-2){18}}
\put(214,21){\line(0,1){18}}
\put(214,21){\line(-1,-1){18}}

\put(156,20){$1$}
\put(156,38){$2$}
\put(176,59){$3$}
\put(194,59){$4$}
\put(216,37){$1$}
\put(216,19){$2$}
\put(194,-2){$3$}
\put(176,-2){$4$}
\put(184,29){$Y_{134}$}
\put(192,38){$Y_{14}$}
\put(174,20){$Y_{14}$}
\put(202,38){$Y_{1}$}
\put(166,20){$Y_{1}$}

\end{picture}
\qquad
    \end{center} 
    \caption{An example of the bijection between maximal nested sets on a cycle and centrally symmetric triangulations.
}
    \label{fig:lin3}
\end{figure}


\begin{theorem}
Let $t_\s$ be a seed of $\A'_{C_n}$, and let $S_i = IiJ \neq [n]$, where $I,J \in \s \cup \{\emptyset\}$ are chosen so that if $j = i^+$ then $S_j = IiJjK$ for some $K \in \s \cup \{\emptyset\}$ (the concatenated elements and subsets are disjoint and ordered around the cycle).  The exchange relation for $Y_{S_i}$ is described as follows:
\begin{itemize}
 \item if $S_{j^+} \neq [n]$, we have 
$$Y_{IiJ} Y_{JjK} = Y_{J} Y_{IiJjK} + Y_{I} Y_{K};$$
 \item if $S_{j^+} = [n]$, so that $IiJj = [n]$, we have 
$$Y_{IiJ} Y_{JjI} = Y_I Y_{J} Y_{[n]} + (Y_{I}+Y_{J})^2.$$
\end{itemize}
\end{theorem}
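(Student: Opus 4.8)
The plan is to read both exchange relations directly off the internal exchange Laurent polynomial \eqref{E:hFi}, exactly as in the proof of Theorem \ref{thm:path}, the only new ingredient being the path bookkeeping forced by the cyclic topology. By Theorem \ref{thm:main} the exchange relation mutating $Y_{S_i}$ is $Y_{S_i}\,Y_{(S_j-i)\oplus j}=\hF_i^{\s}$, and with $R=S_j-\{i,j\}$ the formula \eqref{E:hFi} together with Lemma \ref{lem:Yex} identifies $\hF_i^{\s}$ with
$$Y_{R\oplus i}\,Y_{R\oplus j}=\frac{Y_{Rij}\,Y_{R}+P_R^{i,j}P_R^{j,i}}{Y_{R\ominus i}\,Y_{R\ominus j}}.$$
Since $R\oplus i=IiJ=S_i$ and $R\oplus j=(S_j-i)\oplus j$ is the new cluster variable, the entire problem reduces to evaluating the six quantities $Y_{Rij}$, $Y_R$, $Y_{R\ominus i}$, $Y_{R\ominus j}$, $P_R^{i,j}$, $P_R^{j,i}$ for $\Gamma=C_n$, where strongly connected sets are cyclic intervals.

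First I would treat the generic case $S_j\neq[n]$. Here $R=I\sqcup J\sqcup K$ splits into its three arcs, so $Y_R=Y_IY_JY_K$ by Lemma \ref{lem:Yfac}, while $Y_{Rij}=Y_{S_j}=Y_{IiJjK}$, and a short computation of connected components gives $R\ominus i=K$ and $R\ominus j=I$. The crucial point is the path count: because the complement of the proper interval $S_j$ separates the arc $I$ from the arc $K$, there is a \emph{unique} simple path $i\to_R j$, namely the one sweeping through $J$; hence $P_R^{i,j}=Y_{R-J}=Y_IY_K$, and symmetrically $P_R^{j,i}=Y_IY_K$. Substituting and cancelling the denominator $Y_IY_K$ yields $\hF_i^{\s}=Y_JY_{IiJjK}+Y_IY_K$ and the stated relation $Y_{IiJ}Y_{JjK}=Y_JY_{IiJjK}+Y_IY_K$.

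The second case $IiJj=[n]$ (so that $S_j=[n]$ and $K=\emptyset$) is where the cycle closes up, and this is the only genuinely new phenomenon. Now $R=I\sqcup J$ with $Y_R=Y_IY_J$ and $Y_{Rij}=Y_{[n]}$, and since adjoining $i$ (resp. $j$) to $R$ reconnects all of $R$, one checks $R\ominus i=R\ominus j=\emptyset$, so the denominator is trivial. The key difference from the first case is that, with no complementary arc to obstruct it, there are now \emph{two} simple paths $i\to_R j$: one through $J$ and one the other way through $I$. These contribute $Y_I$ and $Y_J$ respectively, so $P_R^{i,j}=P_R^{j,i}=Y_I+Y_J$, and \eqref{E:hFi} collapses to $\hF_i^{\s}=Y_IY_JY_{[n]}+(Y_I+Y_J)^2$, giving $Y_{IiJ}Y_{JjI}=Y_IY_JY_{[n]}+(Y_I+Y_J)^2$ since the new cluster variable is $Y_{(S_j-i)\oplus j}=Y_{JjI}$.

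The only real obstacle is the path enumeration and the accompanying $\oplus/\ominus$ computations in the wraparound setting: one must verify both that the generic interval admits no second path and that the closed-up interval admits exactly two, and that in each case the resulting $R-p$ sets factor as claimed through Lemma \ref{lem:Yfac}. I expect this to be routine given Lemma \ref{lem:det}, which reexpresses each $P_R^{i,j}$ as a signed minor of $\Y$ and makes the ``one path versus two paths'' dichotomy transparent as the presence or absence of the wraparound entry of the cyclic adjacency; indeed one can alternatively run the whole argument through \eqref{eq:Jacobi} applied to $\Y_{Ri,Rj}$, exactly as Lemma \ref{lem:Yex} is proved.
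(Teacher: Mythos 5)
Your proposal is correct and takes exactly the paper's route: the paper's entire proof is the single line ``Direct application of \eqref{E:hFi},'' and your identification of $R=S_j-\{i,j\}$ with its arc decomposition, the computations $R\ominus i=K$, $R\ominus j=I$ (trivial in the wraparound case), and the one-path-versus-two-paths evaluation of $P_R^{i,j}$ are precisely the verification that line leaves implicit. The only cosmetic discrepancy is that you phrase the case split as $S_j\neq[n]$ versus $S_j=[n]$ rather than the theorem's $S_{j^+}\neq[n]$, but your reading is the one consistent with the displayed formulas and with the paper's worked example.
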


\begin{proof}
 Direct application of \eqref{E:hFi}.
\end{proof}

The LP algebra $\A'_{C_n}$ cannot be identified with a type $B$ cluster algebra, even though it has the same cluster complex (dual to the cyclohedron).

\begin{example}
 The seed corresponding to the maximal nested collection in Figure \ref{fig:lin3} has the following form:
$$\{(Y_{134}, (1+Y_{14})^2+Y_{1234}Y_{14}), (Y_{14}, Y_{134}+Y_1), (Y_1, 1+Y_{14})\}.$$
\end{example}

In particular, we see that some of the exchange polynomials are not binomial, and thus this cannot be identified with a type $B$ cluster algebra.

\subsection{Complete graphs}

Consider the complete graph $K_n$ on $[n]$ that consists of all edges $i \longrightarrow j$ for $i \not = j$. The clusters are in bijection with activation sequences $\vec{S} = (s_1, \ldots, s_k)$, each sequence yielding a different cluster. 

For $1 \leq j \leq k$ define $\YY_{S_{s_j}}$ by 
$$\YY_{S_{s_j}} = \prod_{\ell =1}^{j-1} Y_{S_{s_{\ell}}}^{2^{j-\ell-1}}$$
and define $\Yy_{S_{s_j}}$ recursively via
$$\Yy_{S_{s_1}} = 1+ Y_{s_1} \;\;\; \text{and} \;\;\; \Yy_{S_{s_j}} = \prod_{1 \leq \ell < j} \Yy_{S_{s_{\ell}}} + Y_{S_{s_j}} \YY_{S_{s_{j-1}}}.$$
For example, $\Yy_{S_{s_4}}=$
$$(1+ Y_{s_1})(1+ Y_{s_1} +Y_{s_1s_2})((1+ Y_{s_1})(1+ Y_{s_1} +Y_{s_1s_2})+Y_{s_1} Y_{s_1s_2s_3})+Y_{s_1}^2 Y_{s_1s_2} Y_{s_1s_2s_3s_4}.$$

\begin{theorem}
The exchange polynomials of the seed $t_{\vec{S}}$ of $\A_{K_n}$ are given by
\
\begin{itemize}
 \item For $j < k$ we have $$F_{S_{s_j}} = \prod_{1 \leq \ell < j} \Yy_{S_{s_{\ell}}}^2 + Y_{S_{s_{j+1}}} \YY_{S_{s_{j}}} \;\;\; \text{and} \;\;\; \hF_{S_{s_j}} = \frac{F_{S_{s_j}}}{\YY_{S_{s_{j-1}}}^2}.$$
 \item For $j=k$ we have
$$F_{S_{s_k}} = A_{s_k} Y_{S_{s_{k-1}}} \YY_{S_{s_{k-1}}} + \sum_{1 \leq j<k} (A_{s_j} Y_{S_{s_{j-1}}} \YY_{S_{s_{j-1}}} \cdot \prod_{1 \leq \ell < k, \ell \not = j} \Yy_{S_{s_{\ell}}}) + \left(\sum_{j \not \in S} X_j \right) \prod_{1 \leq \ell < k} \Yy_{S_{s_{\ell}}}$$
$$\text{and} \;\;\; \hF_{S_{s_k}} = \frac{F_{S_{s_k}}}{\YY_{S_{s_{k-1}}}}.$$
 \item For $m \not \in S$ we have
$$F_{{m}} = A_{m} Y_{S_{s_{k}}} \YY_{S_{s_{k}}} + \sum_{1 \leq j \leq k} (A_{s_j} Y_{S_{s_{j-1}}} \YY_{S_{s_{j-1}}} \cdot \prod_{1 \leq \ell \leq k, \ell \not = j} \Yy_{S_{s_{\ell}}}) + \left(\sum_{j \not \in S, j \not = m} X_j \right) \prod_{1 \leq \ell \leq k} \Yy_{S_{s_{\ell}}}$$
$$\text{and} \;\;\; \hF_{m} = \frac{F_{m}}{\YY_{S_{s_{k}}}}.$$
\end{itemize}
\end{theorem}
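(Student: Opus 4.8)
The plan is to reduce everything to the universal formula \eqref{E:hFi}, after first recording how the nested-collection combinatorics degenerates for $K_n$. Since every nonempty subset of $[n]$ is strongly connected, a nested collection must be a chain: an activation sequence $\vec{S} = (s_1,\ldots,s_k)$ produces the totally ordered family $S_{s_j} = \{s_1,\ldots,s_j\}$, with $s_j^+ = s_{j+1}$, $R = S_{s_{j+1}} - \{s_j,s_{j+1}\} = S_{s_{j-1}}$, and $A \oplus i = Ai$, $A \ominus i = \emptyset$ for every nonempty $A$. Hence all the denominators $Y_{S \ominus i}$ and $Y_{R \ominus i}Y_{R \ominus j}$ in \eqref{E:hFi} are trivial, and the three cases of the theorem correspond exactly to the internal branch of \eqref{E:hFi} for $j<k$ (giving $\hF_{s_j} = Y_{S_{s_{j+1}}}Y_{S_{s_{j-1}}} + P^{s_j,s_{j+1}}_{S_{s_{j-1}}}P^{s_{j+1},s_j}_{S_{s_{j-1}}}$) and the external branch for $j=k$ and for $m \notin S$. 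The only remaining input is an explicit evaluation, in the cluster variables $Y_{S_{s_1}},\ldots,Y_{S_{s_k}}$, of the path polynomials $P^{\cdot,\cdot}_{S_{s_m}}$.

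The heart of the argument is the following pair of formulas, which I would prove by simultaneous induction on $m$. For external $i \neq j$,
$$P^{i,j}_{S_{s_m}} = \frac{\prod_{\ell=1}^m \Yy_{S_{s_\ell}}}{\YY_{S_{s_m}}},$$
and for external $i$ and internal $s_a$ with $a \le m$,
$$P^{i,s_a}_{S_{s_m}} = \frac{Y_{S_{s_{a-1}}}\,\YY_{S_{s_{a-1}}}\,\prod_{\ell \le m,\, \ell \neq a}\Yy_{S_{s_\ell}}}{\YY_{S_{s_m}}}$$
(with the conventions $Y_{S_{s_0}} = \YY_{S_{s_0}} = 1$, and the base cases $m\le 1$ checked by direct path enumeration). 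Two elementary tools drive the induction. The first is the \emph{endpoint reduction} $P^{v,w}_S = P^{v,w}_{S\setminus\{v\}}$ valid for any $v\in S$, since $v$ can never be an intermediate vertex and $S-p = (S\setminus\{v\})-p$; applied to $v = s_m$ it reduces the internal-endpoint case $a=m$ to the external case at level $m-1$. The second is Lemma \ref{lem:ikj} with $S = S_{s_{m-1}}$ and $k = s_m$: writing $P^{i,s_a}_{S_{s_m}}Y_{S_{s_{m-1}}} = P^{i,s_m}_{S_{s_{m-1}}}P^{s_m,s_a}_{S_{s_{m-1}}} + P^{i,s_a}_{S_{s_{m-1}}}Y_{S_{s_m}}$ (and analogously for external $j$), every term on the right is covered by the inductive hypothesis. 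The resulting simplification uses the arithmetic identity $\YY_{S_{s_m}} = Y_{S_{s_{m-1}}}\YY_{S_{s_{m-1}}}^2$ together with — crucially — the defining recursion $\Yy_{S_{s_m}} = \prod_{\ell<m}\Yy_{S_{s_\ell}} + Y_{S_{s_m}}\YY_{S_{s_{m-1}}}$, which is exactly what factors the numerator. I expect this step, in particular threading the internal-endpoint formula through these reductions while keeping track of the doubling exponents in $\YY$, to be the main obstacle; the rest is bookkeeping.

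With the two path-polynomial formulas in hand, each case is a direct substitution. For $j<k$ one inserts the external formula for $P^{s_j,s_{j+1}}_{S_{s_{j-1}}} = P^{s_{j+1},s_j}_{S_{s_{j-1}}}$ and clears denominators using $\YY_{S_{s_j}} = Y_{S_{s_{j-1}}}\YY_{S_{s_{j-1}}}^2$, producing $F_{S_{s_j}} = \prod_{\ell<j}\Yy_{S_{s_\ell}}^2 + Y_{S_{s_{j+1}}}\YY_{S_{s_j}}$ and $\hF_{S_{s_j}} = F_{S_{s_j}}/\YY_{S_{s_{j-1}}}^2$. For $j=k$ (where $s_k\in S^{\max}$) and for external $m$, the external branch of \eqref{E:hFi} is a sum $\sum P^{s_k,\cdot}X + \sum P^{s_k,\cdot}A$; here one uses endpoint reduction $P^{s_k,\cdot}_{S_{s_k}} = P^{s_k,\cdot}_{S_{s_{k-1}}}$ together with $P^{s_k,s_k}_{S_{s_k}} = Y_{S_{s_{k-1}}}$ and $P^{m,m}_{S_{s_k}} = Y_{S_{s_k}}$ for the coefficients of $A_{s_k}$ and $A_m$. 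Reading off the coefficient of each $X_m$ and $A_{s_a}$ against the two formulas reproduces the stated $F_{S_{s_k}}$ and $F_m$, and dividing by $\YY_{S_{s_{k-1}}}$ (resp. $\YY_{S_{s_k}}$) gives the corresponding $\hF$.

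Finally I would confirm that the displayed $F$'s are genuinely the normalized exchange polynomials, not merely Laurent expressions agreeing with $\hF$ up to a monomial. Each $\Yy_{S_{s_\ell}}$ has constant term $1$ (immediate from its recursion), so $\prod_{\ell<j}\Yy^2_{S_{s_\ell}}$, and likewise the $X$-part of $F_{S_{s_k}}$ and of $F_m$, has nonzero constant term; hence no $F$ is divisible by any cluster variable, which pins the monomial in $F/\hF$ down to exactly the stated $\YY$-power. Combined with the irreducibility of Proposition \ref{prop:irred} and the rigidity of Proposition \ref{prop:rigid}, this identifies the $F$'s and $\hF$'s up to the sign conventions already fixed for $\A(t_\emptyset)$, completing the proof.
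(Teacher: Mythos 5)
The paper gives no proof of this theorem --- it states only that ``the proof is meticulous but straightforward, and it is omitted'' --- so there is nothing to compare your argument against; what you have written supplies the missing argument, and it is correct. Your reduction of the combinatorics (every nonempty subset of $[n]$ is strongly connected, so nested collections are chains, $S\oplus i = Si$, $S\ominus i=\emptyset$, and all denominators in \eqref{E:hFi} are trivial) is right, and your two closed forms for the path polynomials are the genuine content: I checked them directly for $m\le 2$ and the induction via Lemma \ref{lem:ikj} with $S=S_{s_{m-1}}$, $k=s_m$ closes exactly as you say, the key cancellation being $P^{i,s_m}_{S_{s_{m-1}}}+Y_{S_{s_m}} = \Yy_{S_{s_m}}/\YY_{S_{s_{m-1}}}$ together with $\YY_{S_{s_m}}=Y_{S_{s_{m-1}}}\YY_{S_{s_{m-1}}}^2$. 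You also correctly isolate and handle the one case Lemma \ref{lem:ikj} does not cover ($j=s_m=k$) via the endpoint reduction $P^{v,w}_S=P^{v,w}_{S\setminus\{w\}}$ for $w\in S$, and your final step pinning down $F$ from $\hF$ (constant term $1$ of each $\Yy_{S_{s_\ell}}$, hence no cluster variable divides the displayed $F$'s) is the right way to rule out an extra monomial discrepancy. The formulas reproduce the paper's worked example with $\vec S=(2,4,1)$, so I am confident the bookkeeping comes out as claimed.
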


The proof is meticulous but straightforward, and it is omitted. 

\begin{example}
 Take $n=4$ and $\vec{S} = (2,4,1)$. Then the variables are $(Y_{124}, Y_2, X_3, Y_{24})$, and the seed $t_{\vec{S}}$ looks like 
$$\{(Y_{124}, A_1 Y_{24} Y_2 + A_4 Y_2 (1+Y_2) + A_2(1+Y_2 +Y_{24}) + X_3 (1+Y_2)(1+Y_2 +Y_{24})),$$ $$(Y_2, 1+Y_{24}),$$
$$(X_3, A_3 Y_{124} Y_{24} Y_2^2 + A_1 Y_{24} Y_2(1+Y_2)(1+Y_2 +Y_{24})+ A_4 Y_2 (1+Y_2)((1+Y_2)(1+Y_2 +Y_{24})+Y_{124}Y_{2}) +$$  $$A_2(1+Y_2 +Y_{24})((1+Y_2)(1+Y_2 +Y_{24})+Y_{124}Y_{2})),$$ $$(Y_{24}, (1+Y_2)^2 + Y_{124}Y_2)\}.$$
\end{example}

\section{Conjectures}
\label{sec:conjectures}
The following are some questions and conjectures about graph LP algebras and more generally linear LP algebras. 

\begin{conjecture}\label{con:pos}
Let $\A_\Gamma$ be a graph LP algebra and $t_\s$ a seed.  Then
\begin{enumerate}
\item
All the exchange polynomials $F_i$ of $t_\s$ are positive polynomials in $\L(t_\s)$.
\item
Any cluster variable of $\A_\Gamma$ has positive coefficients when written as a Laurent polynomial in $\L(t_\s)$.
\end{enumerate}
\end{conjecture}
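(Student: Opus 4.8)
The plan is to reduce both statements to a single positivity property of the $Y$ variables and then attack that property inductively using the Dodgson/Jacobi identity. The key structural observation is that the exchange Laurent polynomials \eqref{E:hFi} are built \emph{subtraction-freely} from the $P^{i,j}_S$ and the $Y_T$: each $P^{i,j}_S=\sum_p Y_{S-p}$ is a positive sum, each denominator $Y_{S\ominus i}$ (or $Y_{R\ominus i}Y_{R\ominus j}$) is a \emph{monomial} in the cluster variables $\{Y_{S_k}\mid S_k\in\s\}$ of $t_\s$, and the internal numerator $Y_{Rij}Y_R+P^{i,j}_R P^{j,i}_R$ carries a crucial \emph{plus} sign (the two minus signs coming from \eqref{eq:Jacobi} and from path parity cancel, as in Lemma \ref{lem:det}). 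Hence, granting that every $Y_T$ with $T\subseteq S$ is a positive Laurent polynomial in $\L(t_\s)$, part (1) follows immediately: the numerator of $\hF^\s_i$ is a positive combination of products of such $Y_T$ with the $X_j$ and $A_j$, and dividing a positive Laurent polynomial by a monomial in the variables only shifts exponents, preserving positivity of coefficients. The same reduction settles part (2) for every cluster variable $Y_I$ with $I\subseteq S$.

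So the core is the following \textbf{Main Lemma}: for each $\s\in\C$ with support $S$ and each strongly connected $T\subseteq S$, the expansion of $Y_T$ in $\{Y_{S_k}\mid S_k\in\s\}$ has nonnegative coefficients. I would prove this by induction on $|S|$. Using Lemma \ref{lem:Yfac} one reduces to $S$ strongly connected, in which case $\s$ has a unique maximal element $j_0$ with $S_{j_0}=S$. If $T\in\s$ there is nothing to prove, and if $j_0\notin T$ then $Y_T$ already lies in $\L(\s^{j_0})$ --- the Laurent ring on the strictly smaller support $S-\{j_0\}$, whose cluster variables form a subset of those of $t_\s$ --- so induction on $|S|$ applies. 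The remaining case $j_0\in T$ is handled by the recursion of Proposition \ref{prop:Yrat}: one performs internal mutations at some $k\in S\setminus T$ (which exists since $T\subsetneq S$) until $k$ becomes maximal. At each such mutation one \emph{solves} Lemma \ref{lem:Yex} for the freshly introduced variable $Y_{S'_k}$; since it occurs linearly in the product $Y_{S_k}Y_{S'_k}$, it is expressed as (positive numerator)$/$(monomial) and is thus a positive element of $\L(\s)$.

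The one subtlety in running this recursion is that substituting a multi-term positive Laurent polynomial for a variable occurring with a \emph{negative} exponent would destroy positivity. The argument should go through because the fresh variable $Y_{S'_k}$ has support containing $T$ and, by the bookkeeping of which cluster variables can divide $Y_T$ (the denominator of $Y_T$ is expected to involve only those $Y_{S_m}$ with $S_m\not\supseteq T$), such a variable occurs in $Y_T$ only to nonnegative powers; substitution is then positivity-preserving. Making this exponent bookkeeping precise and stable under the mutations is the technical heart of the Main Lemma, and I expect it to be the main work for part (1) and the $I\subseteq S$ case of part (2).

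For the rest of part (2) --- the variables $Y_I$ with $I\not\subseteq S$ and the variables $X_i$ with $i\in S$ --- the natural tools are activation and deactivation. Lemma \ref{lem:maxmut} writes $Y_{S\oplus i}$ (for $i\notin S$) as a positive polynomial in the $P^{i,j}_S$ and the $X_j,A_j$ divided by the monomial $Y_{S\ominus i}X_i$, and $X_i$ \emph{is} a cluster variable of $t_\s$, so $Y_{S\oplus i}$ is positive in $\L(t_\s)$; solving the same relation for $X_i$ treats the deactivation direction. The genuine obstacle, and presumably the reason the statement is only conjectural, is that a general cluster variable lies many mutations away from $t_\s$: iterating these single-step positive expressions forces substitution of positive Laurent polynomials into earlier ones, and outside the controlled $T\subseteq S$ regime I have no exponent bound guaranteeing that the substituted variables occur nonnegatively. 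A complete proof would likely require a uniform combinatorial model --- a flow or dimer expansion computing every $Y_I$ and $X_i$ directly in an arbitrary seed $t_\s$, in the spirit of the acyclic-function model available for the initial seed $t_\emptyset$; producing such a model in \emph{every} cluster is the crux I do not see how to finish.
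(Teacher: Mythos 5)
The statement you are addressing is Conjecture \ref{con:pos} of the paper: the authors offer no proof, only the observations that the Laurentness in part (2) follows from Theorem \ref{thm:LP}, that part (2) implies part (1), that part (2) holds at the initial seed $t_\emptyset$ by the very definition of the $Y_I$, and that the computations of Section \ref{sec:examples} are consistent with it. Your reductions agree with, and slightly refine, these remarks: in particular the passage from positivity of the $Y_T$, $T\subseteq S$, in $\L(t_\s)$ to positivity of the exchange polynomials via \eqref{E:hFi} is sound, since the numerators there are subtraction-free in the $P^{i,j}_S$ and the $Y$'s, and the denominators are genuine monomials in the cluster variables of $t_\s$.

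However, your argument does not close the conjecture, and you have correctly located where it breaks. The induction you propose for your Main Lemma rides on the recursion of Proposition \ref{prop:Yrat}, which only produces $Y_T$ as a \emph{rational} function of the cluster variables of $t_\s$; to upgrade ``rational'' to ``positive Laurent'' you must substitute a multi-term positive Laurent polynomial (the newly solved-for variable $Y_{(S_{k^+}-\{k\})\oplus k^+}$) into an expression in which that variable may a priori occur with negative exponent, and positivity is not preserved under such substitutions. The ``exponent bookkeeping'' you defer --- that the denominator of $Y_T$ involves only those cluster variables $Y_{S_m}$ with $S_m\not\supseteq T$ --- is itself an unproven denominator-vector statement at least as strong as anything established in the paper, and nothing in the paper supplies it. Your treatment of the remaining cluster variables ($Y_I$ with $I\not\subseteq S$, and $X_i$ with $i\in S$) via iterated activation and deactivation runs into the same substitution obstacle, as you acknowledge. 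So what you have is a reasonable reduction of the conjecture to a denominator/positivity statement about the $Y$ variables in an arbitrary seed, not a proof; the statement remains open in the paper as well, and a complete argument would likely require the kind of uniform combinatorial (flow or dimer) model in every cluster that you mention at the end.
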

The Laurentness of Conjecture \ref{con:pos}(2) follows from Theorem \ref{thm:LP}.
Conjecture \ref{con:pos}(2) implies Conjecture \ref{con:pos}(1).  For the case $t_\s = t_\emptyset$ is the initial seed, Conjecture \ref{con:pos}(2) follows from the definition of the $Y_I$.  The calculations in Section \ref{sec:examples} also support Conjecture \ref{con:pos}.

\begin{problem}
 Find a combinatorial interpretation for the coefficients of the Laurent polynomials in the Conjecture \ref{con:pos}.
\end{problem}

A monomial in the cluster variables of $\A_\Gamma$ is a {\it {cluster monomial}} if there is some seed $t_\s$ containing all the variables occurring in that monomial. 

\begin{conjecture}\label{con:cluster}
Let $\A_\Gamma$ be a graph LP algebra. 
\begin{enumerate}
\item
Cluster monomials form a basis for $\A_\Gamma$ over $R$.
\item
 Any monomial in the cluster variables of $\A_\Gamma$ can be written as a nonnegative linear combination of cluster monomials.
\end{enumerate}
\end{conjecture}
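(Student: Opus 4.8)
The plan is to split Conjecture~\ref{con:cluster} into a linear independence statement and a positive straightening statement, and to first observe that part~(2) already subsumes the spanning half of part~(1): since $\A_\Gamma$ is generated over $R$ by its cluster variables, it is spanned over $R$ by arbitrary monomials in cluster variables, so a nonnegative (in particular $R$-linear) expansion of each such monomial into cluster monomials shows that cluster monomials span. Thus the two essential ingredients are (a) $R$-linear independence of cluster monomials, and (b) the positive straightening law of part~(2). I would attack (a) via denominator vectors and a fan argument, and (b) via a terminating, positivity-preserving rewriting system built from the exchange relations of Section~\ref{sec:Y}.

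For (a) I would work in the initial cluster $\{X_1,\dots,X_n\}$ and record the denominator vector of each cluster variable. By Lemma~\ref{lem:irr} the numerator of $Y_I$ is irreducible and is divisible by no $X_i$ (it contains the monomial $\prod_{i\in I}A_i$), so the denominator of $Y_I$ is exactly $\prod_{i\in I}X_i$; hence $\mathbf{d}(Y_I)=\mathbf{1}_I$ and $\mathbf{d}(X_i)=-\mathbf{e}_i$. For a maximal nested collection $\s$ with support $S$, the denominator vectors of the variables of $t_\s$ are $\{-\mathbf{e}_i : i\notin S\}\cup\{\mathbf{1}_{S_i} : i\in S\}$. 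Using the poset $\preceq_\s$ of Lemma~\ref{lem:nested}, the vectors $\mathbf{1}_{S_i}$ are unitriangular along any linear extension, so together with the $-\mathbf{e}_i$ they form a $\Z$-basis of $\Z^n$; in particular each cluster spans a unimodular simplicial cone $C_\s$, and distinct cluster variables give distinct rays. The crux is then to show that $\{C_\s : \s\in\C\}$ assembles into a complete simplicial fan on $\mathbb{R}^n$ — morally the normal fan of the nestohedron $P(\Gamma)$, extended by the ``deactivation'' directions $-\mathbf{e}_i$. Granting completeness and unimodularity, every $v\in\Z^n$ lies in the relative interior of a unique cone and has a unique nonnegative integer expansion along its rays; since a shared ray is the same cluster variable in either cluster, the map sending a cluster monomial to its denominator vector is injective, and the standard ``extremal denominator cannot cancel'' argument then gives linear independence over $R$.

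For (b) I would set up a straightening procedure. Given a monomial in cluster variables that is not a cluster monomial, select two incompatible variables — in normalized form these are $Y_{S\oplus i}$ and $Y_{S\oplus j}$ for suitable $S,i,j$ — and replace their product using Lemma~\ref{lem:Yex}, which expresses $Y_{S\oplus i}Y_{S\oplus j}$ through $Y_{Sij}Y_S$ and $P_S^{i,j}P_S^{j,i}$. Both right-hand terms are polynomials in the $Y_T$ with $T\subseteq Sij$ having nonnegative integer coefficients, so each step trades an incompatible pair for a nonnegative combination of products that are ``closer to nested.'' To turn this into a proof one needs a well-founded statistic on monomials (measuring total incompatibility) that strictly decreases under each replacement, guaranteeing termination, together with confluence, or at least that some terminal normal form is reached whose coefficients are nonnegative.

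The main obstacle is precisely the positivity bookkeeping in (b) and its coupling to (a). Although a single application of Lemma~\ref{lem:Yex} is manifestly positive, the intermediate factors $Y_{Sij}Y_S$ and $P_S^{i,j}P_S^{j,i}$ are themselves generally incompatible, so the rewriting must be iterated, and controlling the sign of the coefficients after repeated re-expansion in cluster monomials is essentially the content of the still-conjectural positivity Conjecture~\ref{con:pos}. I therefore expect that a clean proof of part~(2) will either have to be bundled with a proof of Conjecture~\ref{con:pos}, or will require an independent combinatorial model — for instance an ``acyclic function'' or path interpretation of the coefficients, in the spirit of Lemma~\ref{lem:maxmut} — in which nonnegativity is built in from the outset. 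By contrast, establishing completeness of the denominator fan for an arbitrary directed $\Gamma$ is a more routine, though still nontrivial, second obstacle, likely handled by showing that the activation/greedy procedure of Section~\ref{sec:nestmutations} assigns to each $v\in\mathbb{R}^n$ a unique maximal nested collection whose cone contains it.
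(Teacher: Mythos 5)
This statement is Conjecture \ref{con:cluster} in the paper: the authors offer no proof of it, and indeed immediately follow it with an open Problem asking for a combinatorial interpretation of the expansion coefficients. So there is no ``paper's proof'' to compare against, and your text should be read as a research plan rather than a proof. As a plan it is sensible and follows the standard cluster-algebra template, but it does not close the conjecture, and you are right to flag the two places where it fails to.

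Concretely, the gaps are these. For part (1), your computation of denominator vectors is correct ($\mathbf{d}(Y_I)=\mathbf{1}_I$ follows from Lemma \ref{lem:irr} plus the presence of the monomial $\prod_{i\in I}A_i$, and $\mathbf{1}_{S_i}=\sum_{j\preceq_\s i}\mathbf{e}_j$ gives unitriangularity), so each cluster does yield a unimodular cone. But everything then hinges on the assertion that these cones form a complete fan with pairwise disjoint interiors, which you only ``grant''; and even given the fan, injectivity of the denominator map on cluster monomials does not by itself yield $R$-linear independence --- you also need each cluster monomial to be \emph{pointed} (to have a distinguished extremal Laurent monomial that cannot be produced by any other cluster monomial), which is an additional unproved positivity/properness statement. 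For part (2), a single application of Lemma \ref{lem:Yex} is positive, but as you say the terms $Y_{Sij}Y_S$ and $P_S^{i,j}P_S^{j,i}$ are generally again not cluster monomials, and there is no well-founded decreasing statistic or confluence argument supplied; controlling signs under iteration is essentially equivalent to Conjecture \ref{con:pos}, which is also open. In short: the approach is a reasonable one, but both pillars (completeness of the denominator fan, and a terminating positive straightening law) are left as unproved hypotheses, so the conjecture remains open after your argument exactly as it was before it.
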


\begin{problem}
 Find a combinatorial interpretation for the coefficients of cluster monomials in  Conjecture \ref{con:cluster}.
\end{problem}

In this paper we have focused on exchange polynomials of the form $F_i = A_i + \sum_{i \to j} X_j$.  Allowing the $A_i$ to be specialized, or the $X_j$ to have coefficients gives an arbitrary linear LP algebra.

\begin{conjecture}\label{con:finite}
Any linear LP algebra is of finite type.
\end{conjecture}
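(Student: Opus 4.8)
The plan is to reduce the conjecture to a finiteness statement about cluster variables, and then to establish that finiteness by comparison with a ``generic'' weighted version of the graph LP algebra. The first, and cleanest, observation is a reduction. In any normalized LP algebra the exchange relation reads $\hF_i = x_i x_i'$, where $x_i'$ is the mutated cluster variable, and by \cite[Lemma 2.3]{LP} the polynomial $F_i$ is determined by $\hF_i$. Hence if the set $\mathcal{V}$ of all cluster variables of a linear LP algebra $\A$ is finite, then every $\hF_i$ lies in the finite set $\{vw \mid v,w\in\mathcal{V}\}$, every $F_i$ ranges over a finite set, and so $\A$ has only finitely many seeds. Thus \emph{it suffices to prove that a linear LP algebra has finitely many cluster variables.}

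Given the linear seed with $F_i = a_i + \sum_j c_{ij}x_j$, associate the graph $\Gamma$ with an edge $i\to j$ whenever $c_{ij}\neq 0$. I would introduce the generic weighted graph LP algebra $\tilde\A$ over the coefficient ring $\Z[C_{ij},A_i]$, indexed by the edges and vertices of $\Gamma$, whose initial exchange polynomials are $A_i + \sum_{i\to j} C_{ij} x_j$. The entire apparatus of Sections \ref{sec:Y}--\ref{sec:main} goes through once each edge $i\to j$ is given weight $C_{ij}$: one replaces the off-diagonal entry $-1$ of $\Y$ by $-C_{ij}$, weights each acyclic function and each path $P_S^{i,j}$ by the product of the traversed edge weights, and observes that Proposition \ref{prop:det}, the Jacobi/Dodgson identity \eqref{eq:Jacobi}, and Lemmas \ref{lem:det}--\ref{lem:ikj} are statements about determinants that are insensitive to the values of the entries. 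Re-running the proof of Theorem \ref{thm:main} in this weighted setting shows that $\tilde\A$ is again a normalized LP algebra of finite type, with cluster variables the weighted analogues $\tilde X_i$ and $\tilde Y_I$.

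The given algebra $\A$ is then the specialization of $\tilde\A$ under the ring homomorphism $\pi\colon \Z[C_{ij},A_i]\to R$ sending $C_{ij}\mapsto c_{ij}$ and $A_i\mapsto a_i$ (the degenerate cases are recovered by allowing $a_i=0$). The core claim, to be proved by induction on the length of a mutation sequence, is a \emph{domination} statement: if $t$ and $\tilde t$ are the seeds of $\A$ and $\tilde\A$ reached from the respective initial seeds by the same sequence of mutation indices, then each cluster variable of $t$ divides the specialization $\pi(\tilde x)$ of the corresponding variable of $\tilde t$, and each $\hF$ of $t$ divides $\pi(\hat{\tilde F})$, in both cases up to a Laurent monomial and a unit. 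At the initial seed this is an equality, since $F_i = \pi(\tilde F_i)$. Granting the inductive step, and since $\tilde\A$ has only finitely many seeds, every cluster variable of $\A$ divides one of the finitely many fixed Laurent polynomials $\pi(\tilde Y_I)$, $\pi(\tilde X_i)$; because $R$ is a unique factorization domain, so is $R[x_1,\dots,x_n]$, and a fixed polynomial has only finitely many divisors up to units and monomials. Hence $\mathcal{V}$ is finite, and the reduction of the first paragraph completes the proof.

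The hard part is exactly the inductive step of the domination claim, and this is where the ``interesting dynamics'' of the coefficients noted in Section \ref{ss:graphassoc} enter. Mutation does not commute with $\pi$: the step of removing common factors in the definition of $H_j$ can strip off \emph{more} factors after specialization than it does generically, so $\pi(\hat{\tilde F}_j)$ is in general a proper multiple of the specialized $\hF_j$, and likewise the new cluster variable of $\A$ is only a divisor of $\pi(\tilde x')$, not equal to it. Making divisibility survive a single mutation requires a lemma to the effect that if $F\mid \pi(\tilde F)$ up to a monomial then $\hF \mid \pi(\hat{\tilde F})$ up to a monomial, controlling how the hat operation of Section \ref{sec:seeds} interacts with specialization and with passage to a divisor; moreover this must hold compatibly for all exchange polynomials at once, since stripping a factor from one $F_j'$ feeds through the substitution \eqref{E:G} that defines the others. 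I expect this compatibility, rather than either the weighted generalization of Theorem \ref{thm:main} or the final counting argument, to be the substantive obstacle.
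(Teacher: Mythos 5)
The first thing to say is that the paper does not prove this statement: it is stated as Conjecture \ref{con:finite} and left open, so there is no proof of record to compare yours against. Your opening reduction --- finitely many cluster variables implies finitely many seeds, via $\hF_i = x_i x_i'$ and the fact that $\hF_i$ determines $F_i$ --- is correct and clean, and the overall strategy (dominate $\A$ by a generic weighted graph LP algebra $\tilde\A$ and specialize) is a sensible line of attack. But what you have written is a program, not a proof, and you say so yourself: the entire argument rests on the ``domination'' induction, which you introduce with ``Granting the inductive step.'' That step is precisely where the difficulty of the conjecture lives, and it is not established.

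Two concrete gaps. First, the weighted extension of Theorem \ref{thm:main} is asserted, not proved. The determinantal identities (Proposition \ref{prop:det}, Lemmas \ref{lem:det}--\ref{lem:ikj}) do weight transparently, but the proof of Theorem \ref{thm:main} also leans on irreducibility and coprimality statements (Lemma \ref{lem:irr}), on Proposition \ref{prop:rigid}, and on the common-factor cancellations in Propositions \ref{prop:spec} and \ref{prop:Ymut}; each of these must be re-verified with generic $C_{ij}$, and the paper itself only says the weighted picture ``appears'' to behave this way while warning that the coefficients ``exhibit interesting dynamics.'' Second, and more seriously, the specialization argument has a structural problem beyond the unproved induction: your final count requires every cluster variable of $\A$ to divide one of the finitely many \emph{nonzero} Laurent polynomials $\pi(\tilde Y_I)$, $\pi(\tilde X_i)$. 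Under an arbitrary specialization $\pi\colon \Z[C_{ij},A_i]\to R$ (with $R$ any UFD, coefficients possibly negative or satisfying relations, $a_i$ possibly $0$), a polynomial $\pi(\tilde Y_I)$ can vanish identically or acquire new factorizations; in the first case divisibility is vacuous and the finiteness conclusion collapses for that branch, and in the second the interaction of the common-factor-stripping step in \eqref{E:G} with $\pi$ is exactly the compatibility you flag but do not resolve. Until the domination lemma is proved in a form that survives both phenomena, the conjecture remains open.
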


Conjectures \ref{con:pos} and \ref{con:cluster} should also have analogues for linear LP algebras whose coefficients satisfy a positivity constraint.

Our results have established that the cluster complex of a graph LP algebra $\A_\Gamma$ is isomorphic to the extended nested complex $\hN(\Gamma)$.  We conjecture that $\hN(\Gamma)$ is dual to a the face lattice of a polytope.  More generally,

\begin{conjecture}\label{con:polytope}
Let $\A$ be any linear LP algebra.  Then there exists a convex simple polytope $P(\A)$ of dimension equal to the rank of $\A$ such that
\begin{enumerate}
\item the vertices of $P(\A)$ are in bijection with clusters of $\A$
\item the facets of $P(\A)$ are in bijection with cluster variables of $\A$
\end{enumerate}
and under these bijections we have that for each face $F$ of $P(\A)$ the collection of facets containing $F$ are exactly the set of cluster variables contained in the intersection of the clusters corresponding to the vertices of $F$.
\end{conjecture}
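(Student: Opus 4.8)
The plan is to reduce the general linear case to the graph LP algebras $\A_\Gamma$ of Theorem~\ref{thm:AGamma} and then to build the polytope $P(\A_\Gamma)$ directly from the combinatorial data that theorem supplies. For the reduction, I would first make precise the assertion of Section~\ref{ss:graphassoc} that a linear seed with exchange polynomials $F_i = A_i + \sum_j c_{ij} X_j$, $c_{ij} \in R$ arbitrary, produces a cluster complex depending only on the support graph $\Gamma$ (the edge $i \to j$ being present exactly when $c_{ij} \neq 0$). Rerunning the mutation calculus of Section~\ref{sec:main} with generic coefficients, one expects the combinatorics of clusters and exchanges to be unchanged, only the coefficient monomials exhibiting nontrivial dynamics; establishing this invariance identifies the cluster complex with the extended nested complex $\hN(\Gamma)$ whenever the constant terms $A_i$ are algebraically independent. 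Degenerate specializations (dependent or vanishing $A_i$) yield fewer clusters, and one would hope to realize their smaller complexes as faces of $P(\A_\Gamma)$. Thus the conjecture reduces to showing that $\hN(\Gamma)$ is dual to the boundary of a simple polytope for every $\Gamma$.

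For the graph case the natural tool is a complete simplicial fan. Using the Laurent phenomenon (Theorem~\ref{thm:LP}), I would attach to each cluster variable $v$ (one of the $X_i$ or $Y_I$) its $g$-vector $g(v) \in \Z^n$, the exponent vector of the leading Laurent monomial of $v$ in $\L(t_\emptyset)$, and to each cluster $t_\s$ the cone spanned by the $g$-vectors of its $n$ members. The description in Theorem~\ref{thm:AGamma}, together with the exchange relations of Lemmas~\ref{lem:maxmut} and~\ref{lem:Yex}, should show that the $g$-vectors of a single cluster form a $\Z$-basis of $\Z^n$, so each cone is unimodular, and that clusters related by a mutation $\mu_s$ give cones meeting along a common facet with the two exchanged rays on opposite sides of the shared wall. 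One then argues that these cones tile $\mathbb{R}^n$, so that the resulting fan $\mathcal{F}_\Gamma$ is complete, simplicial, and has face poset dual to $\hN(\Gamma)$.

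It remains to prove that $\mathcal{F}_\Gamma$ is \emph{polytopal}, i.e.\ the normal fan of a simple polytope $P(\A_\Gamma)$; this is the step I expect to be the main obstacle, and is exactly why the statement is still open. The cleanest route is to exhibit a strictly convex piecewise-linear support function on $\mathcal{F}_\Gamma$, for instance by realizing $P(\A_\Gamma)$ as a generalized permutohedron: one would extend Postnikov's Minkowski-sum description of the nestohedron $P(\Gamma)$ of the building set $\I$ by adjoining the coordinate directions recording the external variables $X_i$, and then check that the resulting deformation of the permutohedron is simple with the correct face lattice. An appealing alternative is to identify $\hN(\Gamma)$ with the nested complex $\N(\Gamma')$ of a suitably augmented building set on $[n]$ (the extra structure encoding the option of not activating a vertex), so that polytopality would follow directly from the known polytopality of nestohedra \cite{FS,Pos}. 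The Corollary following Theorem~\ref{thm:AGamma}, which recovers $\N(\Gamma)$ and the nestohedron after freezing the top components $\T$, already shows that the maximal-support part of $\mathcal{F}_\Gamma$ is coherent; the genuine difficulty lies in controlling the interaction between the $Y_I$-rays and the $X_i$-rays across walls, and in securing the cluster-complex invariance of the first paragraph, without which no fan is even well defined in the general-coefficient case.
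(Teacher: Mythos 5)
This statement is Conjecture \ref{con:polytope}; the paper offers no proof of it, and indeed states explicitly that the cluster complexes of the $\A_\Gamma$ ``are not known to be polytopal.'' The only evidence the authors supply is that the frozen subalgebras $\A'_\Gamma$ satisfy the conjecture, because their cluster complexes are the nested complexes $\N(\Gamma)$, realized by the graph associahedra of Carr--Devadoss and the nestohedra of Postnikov. So there is no paper proof to compare against, and your proposal cannot be assessed as a correct alternative proof: it is a research plan whose decisive steps are left open, as you yourself acknowledge.

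Concretely, the gaps are these. First, the reduction from a general linear LP algebra to a graph LP algebra rests on the claim that the cluster complex depends only on the support graph of the coefficient matrix $(c_{ij})$; the paper only says this ``appears to be'' true (Section \ref{sec:intro}), it is not proved, and the companion Conjecture \ref{con:finite} (finiteness of all linear LP algebras) is likewise open, so in the general-coefficient or degenerate-$A_i$ case you do not even know the complex is finite, let alone that it embeds as a face of $P(\A_\Gamma)$ --- note that a degeneration has the same rank but fewer clusters, so its putative polytope has the same dimension as $P(\A_\Gamma)$ and cannot literally be a proper face of it. Second, for $\A_\Gamma$ itself, no notion of $g$-vector is defined in the paper or in \cite{LP}; the ``leading Laurent monomial'' of a cluster variable in $\L(t_\emptyset)$ is not canonically singled out (the $Y_I$ have many monomials of the same denominator $\prod_{i\in I}X_i$ but varying numerator exponents), so the fan $\mathcal{F}_\Gamma$ is not yet well defined, and neither unimodularity of the cones, completeness of the fan, nor the existence of a strictly convex support function is established. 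Identifying $\hN(\Gamma)$ with the nested complex of an augmented building set is an attractive idea, but the extended nested complex allows arbitrary (non-maximal) supports, which is not the nested-complex condition for any building set on $[n]$ in the sense of \cite{FS,Pos}, so this identification would itself require proof. Until those steps are supplied, the statement remains a conjecture.
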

In particular, Conjecture \ref{con:polytope} would imply that the exchange graph of any linear LP algebra is the 1-skeleton of a polytope. 
The graph associahedra of Carr and Devadoss \cite{CD} and more generally the nestohedra of Postnikov \cite{Pos} show that the LP algebras $\A'_\Gamma$ satisfy Conjecture \ref{con:polytope}.

\section{Proof of Theorem \ref{thm:comm}} \label{sec:proof4}
For polynomials $P,Q$ and a variable $x$, define $\den(P,x,Q)$ to be
$$
\den(P,x,Q) = \min_i(i+\max(s : Q^s | p_i)) 
$$
where $P(x) = \sum_{i=0}^r p_i x^i$.  By definition we have
$$
\hF_i = \prod_{j \neq i} x_j^{-\den(F_i,x_j,F_j)} F_i.$$  
Note that $\den(P,x,Q)$ makes sense even when $P$ is a Laurent polynomial in some variables, as long as $Q$ is not divisible by any of these variables.

\begin{lemma}
Suppose $Q$ is irreducible.  Then
$$
\den(P,x,Q) + \den(P',x,Q) = \den(PP',x,Q).
$$
\end{lemma}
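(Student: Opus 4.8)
The plan is to recognize $\den(\cdot,x,\cdot)$ as a Gauss-type valuation and to prove the asserted additivity by the standard ``no cancellation of leading terms'' argument. Write $P=\sum_i p_i x^i$ and let $v_Q(p)=\max\{s:Q^s\mid p\}$ denote the $Q$-adic order of a coefficient $p$, computed in the localization of the coefficient UFD in which any variables to be inverted have been adjoined. This localization is again a UFD, and since $Q$ is assumed not divisible by any inverted variable it remains prime, so $v_Q$ is a genuine discrete valuation: one has $v_Q(pq)=v_Q(p)+v_Q(q)$ and $v_Q(p+q)\ge\min(v_Q(p),v_Q(q))$, with equality in the latter whenever the two orders differ. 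With this notation $\den(P,x,Q)=\min_i\bigl(i+v_Q(p_i)\bigr)$, so the lemma is exactly the multiplicativity of the induced valuation on polynomials in $x$ (with $x$ assigned value $1$).

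First I would dispose of the easy inequality $\den(PP',x,Q)\ge\den(P,x,Q)+\den(P',x,Q)$, which needs no irreducibility. Writing $PP'=\sum_k c_k x^k$ with $c_k=\sum_{i+j=k}p_ip'_j$, supermultiplicativity of the $Q$-order together with the ultrametric inequality gives $v_Q(c_k)\ge\min_{i+j=k}\bigl(v_Q(p_i)+v_Q(p'_j)\bigr)$, whence $k+v_Q(c_k)\ge\den(P,x,Q)+\den(P',x,Q)$ for every $k$, and we take the minimum over $k$.

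The substance is the reverse inequality, and this is where irreducibility of $Q$ is indispensable. Set $d=\den(P,x,Q)$, $d'=\den(P',x,Q)$, and let $i_0$ (resp.\ $j_0$) be the \emph{smallest} index achieving the minimum defining $d$ (resp.\ $d'$). I would examine the single coefficient $c_{k_0}$ with $k_0=i_0+j_0$ and show $v_Q(c_{k_0})=(d-i_0)+(d'-j_0)$, which forces $k_0+v_Q(c_{k_0})=d+d'$ and hence $\den(PP',x,Q)\le d+d'$. The diagonal term $p_{i_0}p'_{j_0}$ has $Q$-order exactly $(d-i_0)+(d'-j_0)$ by the \emph{exact} additivity $v_Q(p_{i_0}p'_{j_0})=v_Q(p_{i_0})+v_Q(p'_{j_0})$, and this is precisely where primeness of $Q$ enters. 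For any other pair $(i,j)$ with $i+j=k_0$ one has $\bigl(i+v_Q(p_i)\bigr)+\bigl(j+v_Q(p'_j)\bigr)\ge d+d'$, and equality would force $i+v_Q(p_i)=d$ and $j+v_Q(p'_j)=d'$ separately; minimality of $i_0,j_0$ combined with $i+j=i_0+j_0$ then pins $(i,j)=(i_0,j_0)$. Thus every off-diagonal term of $c_{k_0}$ has strictly larger $Q$-order than the diagonal one, and by the equality case of the ultrametric inequality $v_Q(c_{k_0})$ equals that unique minimum.

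The only genuine obstacle is ensuring that the dominant monomial $p_{i_0}p'_{j_0}$ survives in $c_{k_0}$ without cancellation; the uniqueness argument above, forced by selecting the least achieving indices, is exactly what rules this out, and it rests essentially on $v_Q$ being additive on products, i.e.\ on $Q$ being irreducible. I would also remark that the whole argument is insensitive to the presence of inverted (Laurent) variables among the coefficients, precisely because $Q$ is assumed not divisible by any of them, so that $v_Q$ persists as a valuation in the localization.
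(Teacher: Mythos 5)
Your proof is correct and follows essentially the same route as the paper's: both establish the easy inequality first, then pick the minimal achieving indices $i_0,j_0$ and show that in the coefficient of $x^{i_0+j_0}$ of $PP'$ the diagonal term $p_{i_0}p'_{j_0}$ has strictly smaller $Q$-order than every other term, using irreducibility of $Q$ for exact additivity of $Q$-orders on products. The valuation-theoretic packaging is a matter of presentation, not a different argument.
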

\begin{proof}
The inequality $\leq$ is clear.  Let $d = \den(P,x,Q) + \den(P',x,Q)$.  Let $i_0$ be the minimal $i$ such that the minimum $\min_i(i+\max(s : Q^s | p_i))$ is achieved, and similarly for $j_0$ and $\min_j(j+\max(s : Q^s | p'_j))$.  Set $k = i_0+j_0$.  Then the coefficient of $x^{k}$ in $PP'$ is $p_0p'_k + \cdots + p_{i_0}p'_{j_0} + \cdots + p'_kp_0$.  By the minimality assumption, $Q^{d-k+1}$ divides all the terms except for $p_{i_0}p'_{j_0}$, which is only divisible by $Q^{d-k}$.  Thus $\den(PP',x,Q) \leq d$.
\end{proof}

Start with a seed $t = \{(a,P),(b,Q),(x_1,F_1),\ldots,(x_k,F_k)\}$.  We make the assumption that $P$ may depend on $b$, and that $Q$ does not depend on $a$.  Furthermore, we assume that $P/Q$ is not a unit, so in particular that $\den(Q,a,P) = 0$.  We need to construct the successive mutations of $t$ at $a$ and $b$.   In the case that we only have two cluster variables, the claim we want was established in \cite[Proposition 6.3]{LP}.

\subsection{}
We mutate the seed $t$ at $b$ to get $\{(a,R),(b',Q),\ldots,(x_i,F'_i),\ldots\}$.  Our first calculation is that $\hP = \hR$ after we substitute $b'=\hQ/b$.  Strictly speaking, we mean that we can choose the mutation at $b$ so that $\hP = \hR$, since $R$ is only defined up to units.  We will abuse notation in this way throughout.

First we may write
$$
Q = \hQ \times \prod_i x_i^{\den(Q,x_i,F_i)} = \hQ \times D.
$$
 Write 
$$
P(b) = p_0+p_1b + \cdots + p_rb^r.
$$
Then since $Q$ does not depend on $a$, we have
$$
R(b') = \frac{p_0(Db')^r + p_1 (Db')^{r-1}Q + \cdots + p_rQ^r}{Q^sE} = P(\hQ/b')\frac{(Db')^r}{Q^sE}
$$
for some $0 \leq s \leq r$ and $E$ some monomial in the $x_i$.  Also we have
$$
\hP = \dfrac{P}{b^s \prod_i x_i^{\den(P,x_i,F_i)}}
$$
and from the calculation in \cite[Proposition 6.3]{LP}, we already know that $\den(R,b,Q) = r-s$.
For each $i$ define $d_i = \den(Q,x_i,F_i)$.  Note that the monomial $E$ above only involve $x_i$ such that $d_i > 0$.  Let $R' = \dfrac{P(\hQ/b')(Db')^r}{Q^s}$.

\begin{lemma}\label{L:dRP}
We have $\den(R',x_i,F'_i) = d_i(r-s) + \den(P,x_i,F_i)$.
\end{lemma}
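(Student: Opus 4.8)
The plan is to collapse the statement to a single core identity by using that $\den(-,x_i,F'_i)$ is additive on products and quotients, which is legitimate here because $F'_i$ is an irreducible exchange polynomial by (LP1), so the preceding Lemma applies. Starting from the explicit form $R' = P(\hQ/b')\,(Db')^r\,Q^{-s}$, I would split
\[
\den(R',x_i,F'_i) = \den\big(P(\hQ/b'),x_i,F'_i\big) + \den\big((Db')^r,x_i,F'_i\big) + \den\big(Q^{-s},x_i,F'_i\big).
\]
The middle term is a pure monomial computation: $(Db')^r=(b')^r\prod_j x_j^{rd_j}$ has $x_i$-degree $rd_i$, and its coefficient is a monomial, hence not divisible by the non-monomial irreducible $F'_i$, so the term equals $rd_i$. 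For the last term, writing $Q=\hQ\,D$ and using additivity again gives $\den(Q^{-s},x_i,F'_i) = -s\,\den(\hQ,x_i,F'_i) - sd_i$. Feeding these back in and using $(r-s)d_i = rd_i - sd_i$, the whole Lemma reduces to the single identity
\[
\den\big(P(\hQ/b')\,\hQ^{-s},\,x_i,\,F'_i\big) = \den(P,x_i,F_i).
\]

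To attack this core identity, the cleanest viewpoint is that $\den(-,x_i,G)$ is the order of vanishing along the ideal generated by $x_i$ and the irreducible $G$; equivalently it is a valuation, which is exactly the content of the additivity Lemma. I would then recall how mutation at $b$ produces $F'_i$ from $F_i$: up to a unit in $R$ and a Laurent monomial, $F'_i$ is obtained from $F_i$ by the substitution $b \leftarrow \hQ|_{x_i=0}/b'$ followed by removal of the common factors with $\hQ|_{x_i=0}$. Since the $b$-coefficients $p_m$ of $P=\sum_m p_m b^m$ and the polynomial $\hQ$ do not involve $b$ (indeed $\hQ$ involves neither $a$ nor $b$), applying the raw substitution $b\leftarrow\hQ/b'$ to $P$ produces precisely the quantity $P(\hQ/b')$ occurring above. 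The claim is then that the birational mutation map $b\leftrightarrow\hQ/b'$ transports the valuation $\den(-,x_i,F_i)$ to $\den(-,x_i,F'_i)$, with the factor $\hQ^{-s}$ exactly compensating for the $s$ common factors stripped off in passing from $F_i$ to $F'_i$.

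The main obstacle is the bookkeeping caused by the two discrepancies between the raw substitution $b\leftarrow\hQ/b'$ appearing in $R'$ and the cleaned substitution $b\leftarrow\hQ|_{x_i=0}/b'$ defining $F'_i$: first, that $\hQ$ genuinely involves $x_i$, so the substitution mixes $x_i$-degrees and the two substitutions agree only modulo $(x_i)$; and second, the common-factor removal together with the monomial prefactor in the definition of $F'_i$, which is where the power $\hQ^{-s}$ must be matched. For the first point I expect to argue that, because $\den$ measures the order along $(x_i,F'_i)$ rather than a naive $x_i$-graded divisibility, replacing $\hQ$ by $\hQ|_{x_i=0}$ does not change the relevant order, so the full and restricted substitutions may be used interchangeably. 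For the second point I would invoke the irreducibility and pairwise coprimality of the exchange polynomials (as in the earlier divisibility Lemmas) to guarantee that the substitution creates and destroys no spurious powers of $F'_i$, so that the $F'_i$-adic order of $P(\hQ/b')\hQ^{-s}$ matches the $F_i$-adic order of $P$. Carefully carrying out this valuation transfer under the mutation map, while simultaneously controlling the $x_i$-grading, is the only genuinely delicate part; everything else is additivity and monomial accounting.
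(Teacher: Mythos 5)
Your reduction is sound, and it lands essentially where the paper's own argument lives: additivity of $\den(-,x_i,F'_i)$ (legitimate since $F'_i$ is irreducible), together with $\den((Db')^r,x_i,F'_i)=rd_i$ and $\den(Q,x_i,F'_i)=\den(\hQ,x_i,F'_i)+d_i$, does collapse the Lemma to the identity $\den\bigl(P(\hQ/b')\hQ^{-s},x_i,F'_i\bigr)=\den(P,x_i,F_i)$. (The paper instead splits into the two cases ``$F_i$ does not depend on $b$'', where $F'_i=F_i$ and everything is a direct computation with the additivity lemma, and ``$F_i$ depends on $b$'', where $d_i=0$, the monomials $D,E$ do not involve $x_i$, and the identity becomes $\den(R,x_i,F'_i)=\den(P,x_i,F_i)$.) One piece of your bookkeeping is wrong as stated: the factor $\hQ^{-s}$ does not ``compensate for the $s$ common factors stripped off in passing from $F_i$ to $F'_i$''. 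The exponent $s=\den(P,b,Q)$ has nothing to do with that common-factor removal, which strips divisors of $\hQ|_{x_i=0}$ (not of $\hQ$) and not $s$ of them. What is actually true is that $\den(\hQ,x_i,F'_i)=0$ in both cases --- either $F'_i=F_i$ and this holds by the construction of $\hQ$, or $F'_i$ involves $b'$ while $\hQ$ does not, so $F'_i$ divides no $x_i$-coefficient of $\hQ$ --- so the $\hQ^{-s}$ term is simply inert. This needs to be checked, not narrated, and checking it already forces the case distinction you are trying to avoid.

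The genuine gap is in the core identity $\den(P(\hQ/b'),x_i,F'_i)=\den(P,x_i,F_i)$ itself, which is the entire content of the Lemma in the hard case and which you dispatch by asserting that ``the substitution creates and destroys no spurious powers of $F'_i$'' by irreducibility and coprimality. Irreducibility and coprimality do not deliver this. The inequality $\geq$ has a direct argument (and it is the one the paper gives): if $F_i^m$ divides an $x_i$-coefficient $a_j$ of $P$, then $a_j(\hQ/b')=\bigl(F_i(\hQ(0)/b')+x_iH\bigr)^mG(\hQ/b')$, and since $F'_i$ divides $F_i(\hQ(0)/b')$ up to a Laurent monomial, each factor contributes at least $1$ to the order along $(x_i,F'_i)$; this also justifies your point about interchanging $\hQ$ with $\hQ|_{x_i=0}$. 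But the reverse inequality $\leq$ --- that the substitution does not accidentally create extra order --- has no comparably direct proof, and your plan contains no mechanism for it. The paper obtains it \emph{by symmetry}: seed mutation is an involution, so $P$ is recovered from $R$ by the inverse substitution $b'\mapsto\hQ/b$, and running the same $\geq$ argument in the mutated seed gives $\den(P,x_i,F_i)\geq\den(R,x_i,F'_i)$. Without this involution trick, or some substitute for it, your proof of the core identity does not close.
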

\begin{proof}
Case 1: Suppose $F_i$ does not depend on $b$.  Then $F'_i = F_i$.  Let $R'' = P(\hQ/b')(Db')^r$.  Then 
$$
\den(R'',x_i,F_i) = \min_j(d_i(r-j)+ \den(Q^jp_j,x_i,F_i)) =\min_j(d_ir+ \den(p_j,x_i,F_i)) = d_ir + \den(P,x_i,F_i).
$$
Thus $\den(R',x_i,F'_i)=\den(R''/Q^s,x_i,F_i) = d_i(r-s) + \den(P,x_i,F_i)$. 

Case 2: Suppose $F_i$ depends on $b$.  Then $d_i = 0$ and $F'_i$ depends on $b'$.  Also $x_i$ does not occur in $D$ or $E$.  We shall prove that $\den(P,x_i,F_i) = \den(R,x_i,F'_i)$ which suffices since $\den(R,x_i,F'_i) = \den(R',x_i,F'_i)$. 
By symmetry, it is enough to show that $\den(R,x_i,F'_i) \geq \den(P,x_i,F_i)$.  Let $P = a_0+a_1x_i+\cdots+a_kx_i^k$.  If $F_i^s$ divides $a_j$ then we have $a_j = F_i^s\,G$ and
$$
a_j(\hQ/b') = F_i(\hQ/b')^s G(\hQ/b') = (F_i(\hQ(0)/b')+ x_i H)^s G(\hQ/b')
$$
where $\hQ(0)$ is the evaluation of $\hQ$ at $x_i = 0$, and $H$ is a polynomial in $x_i$.  Since $F'_i$ is a divisor of $F_i(\hQ(0)/b')$ up to a monomial in the other variables, we 
see that $\den(a_j(\hQ/b'),x_i,F'_i) \geq s$.  It follows that $\den(R,x_i,F'_i) \geq \den(P,x_i,F_i)$.
\end{proof}

Using the Lemma \ref{L:dRP}, we calculate
\begin{align*}
\hR  &= \dfrac{R'}{b'^{r-s}\prod_i x_i^{\den(R',x_i,F'_i)}} \\
&=\dfrac{P (Db')^r}{b'^{r-s}(Db'b)^s \prod_i x_i^{\den(R',x_i,F'_i)}} \\
&=\dfrac{P  \prod_i x_i^{d_i(r-s)}}{b^s  \prod_i x_i^{\den(R',x_i,F'_i)}} \\
&=\dfrac{P}{b^s \prod_i x_i^{\den(P,x_i,F_i)}} = \hP
\end{align*}

Given the seed $t$, we can mutate at $a$, then $b$, then at the new variable $a'$ at the position of $a$, then at the new variable $b'$ at the position at $b$.   We can also mutate first at $b$, then $a$, then $b'$, then $a'$.  In the following, we call this ``mutating four times''.

\begin{lemma}\label{L:a}
Mutating four times we recover the cluster variable $a$.
\end{lemma}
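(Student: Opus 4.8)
The plan is to establish Lemma \ref{L:a} by exploiting the symmetry of the setup together with the key identity $\hP = \hR$ already proven above. First I would set up notation carefully for the four mutations. Starting from the seed $t = \{(a,P),(b,Q),\ldots\}$, mutating at $a$ produces a new variable $a' = \hP/a$ with some exchange polynomial, and mutating at $b$ produces $b' = \hQ/b$. Because mutations are involutions (\cite[Proposition 2.10]{LP}), the content of the lemma is really a statement about \emph{which} sequence of mutations returns us to $a$, and that the two orders (mutate $a,b,a',b'$ versus $b,a,b',a'$) both recover $a$.

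\medskip

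The crucial observation is that the calculation above shows $\hP = \hR$ after the substitution $b' = \hQ/b$, where $R$ is the exchange polynomial in the position of $a$ after mutating at $b$. This says that mutating at $b$ does \emph{not} change the exchange Laurent polynomial governing the $a$-slot. Therefore, after mutating at $b$ to reach the seed $\{(a,R),(b',Q),\ldots\}$, the subsequent mutation at $a$ sends $a \mapsto \hR/a = \hP/a = a'$, exactly the same new variable we would obtain by mutating at $a$ first. Next I would track the fourth mutation: after mutating at $a$ (recovering the variable $a'$), we mutate at $b'$, and the symmetric computation (with the roles arranged so that the relevant exchange Laurent polynomial in the $a'$-slot is unchanged by the $b'$-mutation) shows that the exchange Laurent polynomial controlling the $a'$-slot is preserved. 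A final mutation at $a'$ then returns $\hat{P'}/a' $; since mutation is an involution and the exchange data controlling $a'$ has been preserved throughout, this equals $a$.

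\medskip

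Concretely, the argument reduces to showing that at each stage the exchange Laurent polynomial in the relevant slot is invariant under the intervening mutation in the \emph{other} variable, so that the composite of the two mutations acting on that slot behaves like a single mutation, which is an involution. The identity $\hP = \hR$ furnishes exactly this invariance for the first pair; a parallel computation using Lemma \ref{L:dRP} (tracking how denominators $\den(R',x_i,F'_i)$ transform) furnishes it for the second pair. I would organize this as a short chain of equalities, each justified by the involution property and by the already-established equality of hatted exchange polynomials, being careful that all exchange polynomials are only defined up to units in $R$ and that the substitutions $b' = \hQ/b$ and $a' = \hP/a$ are applied consistently.

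\medskip

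The main obstacle I anticipate is bookkeeping rather than conceptual: one must verify that the exchange polynomial in the $a$-slot really is untouched not just by the first $b$-mutation (which is the content of $\hP = \hR$) but remains controlled correctly through all four mutations, and that the denominators $\den(\cdot,x_i,F'_i)$ and the monomial factors $D$, $E$ introduced along the way all cancel as in the displayed computation of $\hR$. The subtle point is the dependence of $P$ on $b$ (while $Q$ does not depend on $a$): this asymmetry is what makes the two mutation orders a priori different, and one must confirm that the asymmetry does not obstruct recovering $a$. I expect this to follow from the fact that $\den(Q,a,P) = 0$, which was built into the hypotheses precisely to control this dependence.
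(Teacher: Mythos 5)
Your proposal is correct and follows essentially the same route as the paper: both arguments use the identity $\hP = \hR$ to show that a mutation in the $b$-slot leaves the hatted exchange Laurent polynomial of the $a$-slot unchanged, apply this invariance to each of the two $a$-slot mutations in the four-step sequence, and conclude via the involution property that the final $a$-slot variable is $\hP/a' = a$. The only point worth making explicit (which the paper does in one sentence) is that the invariance re-applies at the later stage because the $b$-slot exchange polynomial remains independent of the $a$-slot variable after the intervening mutations.
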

\begin{proof}
We have shown that $\hP = \hR$.  Thus if we mutate $\{(a,R),(b',Q),\ldots\}$ at $a$ to get $\{(a',R),(b',S),\ldots\}$ the variable $a' = \hR/a = \hP/a$ is the same as what we get if we mutate $\{(a,P),(b,Q),\ldots\}$ at $a$.  But mutating one more time at the position of $b$ (on either end) will not change the first cluster variable.  This is true for all seeds $t'$ with the the exchange polynomial of $b$ does not depend on $a$.  The claim follows.
\end{proof}

\subsection{}
Our next task is to show that mutating four times preserves $b$.   We first mutate $\{(a,P),(b,Q),\ldots\}$ at $a$ to get a seed $\{(a',P),(b,S),\ldots\}$.  But $Q$ does not depend on $a$, so we may pick $S = Q$.  We now show that $\hQ = \hS$, which would follow from $\den(Q,x_i,F_i) = \den(Q,x_i,F'_i)$ where now $F'_i$ is the exchange polynomial obtained by mutation at $a$.  Suppose first that $F_i$ depends on $a$, implying that $F'_i$ also depends on $a$.  Then since $Q$ does not depend on $a$, we have $\den(Q,x_i,F_i) = 0 = \den(Q,x_i,F'_i)$.  Otherwise $F_i$ does not depend on $a$, and so $F_i = F'_i$, and we have $\den(Q,x_i,F_i) = \den(Q,x_i,F'_i)$.  The same argument as in Lemma \ref{L:a} gives

\begin{lemma}
Mutating four times we recover the cluster variable $b$.
\end{lemma}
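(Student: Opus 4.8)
The plan is to run the argument of Lemma \ref{L:a} verbatim, but with the roles of $a$ and $b$ interchanged, tracking the cluster variable sitting at the position of $b$ rather than at the position of $a$. The only new ingredient required is the identity $\hQ = \hS$ that has just been verified, which here plays exactly the role that $\hP = \hR$ played in the proof of Lemma \ref{L:a}. Since mutations are involutions (\cite[Proposition 2.10]{LP}), showing that the position-$b$ variable returns to $b$ after four alternating mutations is an assembly of facts already in hand rather than a fresh computation.

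Concretely, I would first mutate $t = \{(a,P),(b,Q),\ldots\}$ at $a$ to reach the seed $\{(a',P),(b,S),\ldots\}$ with $S = Q$, and then mutate this seed at $b$. By the definition of mutation the resulting variable at the position of $b$ is $\hS/b$, and since $\hS = \hQ$ this equals $\hQ/b$ --- which is precisely the variable produced by mutating $t$ at $b$ directly. Thus, as far as the position-$b$ variable is concerned, the composite $\mu_b\mu_a$ agrees with the single mutation $\mu_b$. Next I would note that a subsequent mutation at the position of $a$ leaves the cluster variable at the position of $b$ untouched, since a mutation at one index fixes every other cluster variable; hence that variable is still $\hQ/b$ after three of the four mutations. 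Finally, because mutation at $b$ is an involution, applying it once more returns $\hQ/b$ to $b$, recovering the original cluster variable, exactly as the claim follows in Lemma \ref{L:a}.

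The step requiring the most care --- and the main obstacle --- is not any single calculation but the consistent handling of units together with the invariance of $\hQ$ across the intermediate seeds. In each intermediate seed the exchange polynomial of $b$ is a unit multiple of $Q$, which does not involve the variable at the position of $a$, so that each $\den(Q,x_i,F_i)$, and therefore $\hQ$, is unchanged under the relevant mutation; this is the content of the two cases (according to whether $F_i$ depends on the position-$a$ variable) treated immediately above, and it is what lets one equate the $\hQ$ appearing at each stage. Granting the normalization already used to assert $\hQ = \hS$, I would fix the unit at each single mutation so that these hatted polynomials agree on the nose, which guarantees that the variable recovered after four mutations is literally $b$ rather than merely a unit multiple of it. With this bookkeeping in place the identities above close the argument.
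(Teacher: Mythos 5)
Your proof follows the paper's argument exactly: the paper likewise mutates first at $a$ (choosing $S=Q$), establishes $\hQ=\hS$ by checking that each $\den(Q,x_i,F_i)$ is unchanged, and then concludes by ``the same argument as in Lemma \ref{L:a},'' which is precisely the position-tracking you spell out. Your closing observation that the real justification for the final mutation is the invariance of $\hQ$ across every intermediate seed (rather than a bare appeal to involutivity) is the correct reading of that last step, so nothing needs to change.
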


\subsection{}
We now show that the exchange polynomials $P$ and $Q$ are recovered after four mutations.  For $Q$ that is clear since mutation at $a$ or $b$ does not change $Q$.  Now consider the sequence of 3 mutations at $b$,$a$,$b'$, giving $\{(a,R),(b',Q),\ldots\}$, then $\{(a',R),(b',Q),\ldots\}$, then $\{(a',T),(b,Q),\ldots\}$.  We need to show that $T = P$.  But we know that one more mutation at $a$ gives us $\{(a,T),(b,Q),\ldots\}$, using the fact that $a$ is recovered after four mutations.  But we thus have $a'= \hT/a = \hP/a$ where $\hT$ can be calculated in the seed $\{(a',T),(b,Q),\ldots\}$, while $\hP$ is calculated in the seed $\{(a,P),(b,Q),\ldots\}$.  Here we have used that $a'$ is also recovered after four mutations.  Thus $\hT = \hP$.  Both are Laurent polynomials in $b$ and $x_i$, so we must have $T = P$ as polynomials in $b$ and $x_i$.

\subsection{}
Finally we need to show that $(x_i,F_i)$ is recovered after four mutations.  This is clear for $x_i$.  Note that it is enough to recover $F_i$ up to a monomial factor in the variables $a,b,x_j$, so we can just work with Laurent polynomials throughout.  Consider the sequence of seeds $t_1=\{(a,P),(b,Q),\ldots\}$, $t_2 = \mu_b(t_1) = \{(a,R),(b',Q),\ldots\}$, then $t_3 = \mu_a(t_2) = \{(a',R),(b',Q),\ldots\}$, then $t_4 = \mu_{b'}(t_3)= \{(a',P),(b,Q),\ldots\}$, then $t_5 = \mu_{a'}(t_4) = \{(a,P),(b,Q),\ldots\}$.  Fix $i$, and let the corresponding $F_i$ in each seed be denoted $$
Z_1(a,b),Z_2(a,b'),Z_3(a',b'),Z_4(a',b),Z_5(a,b).$$  We need to show that $Z_1 = Z_5$.  We shall first assume that $Z_1$ depends on $a$ and $b$.  We have (up to a monomial factor in $a,b,x_j$)
$$
Z_2(a,b') = Z_1(a,\hQ(0)/b')/A
$$
where $A$ is some product of factors of $\hQ(0)$ not depending on $a,b',x_i$.  Here $\hQ(0)$ is the evaluation of $\hQ(x_i)$ at $x_i = 0$.  Similarly,
$$
Z_3(a',b') = Z_2(\hR(0)/a',b')/B(b') =\frac{Z_1(\hR(0)/a',\hQ(0)/b')}{AB(b')}
$$
where since $Z_2$ is defined up to a monomial factor, so is $Z_3$ up to a monomial factor in $a',b',x_j$, and in addition $B(b')$ is defined up to a power of $\hR(0)$ since $Z_2$ was defined up to a power of $a$.  Continuing,
$$
Z_4(a',b) = \frac{Z_1(\hR(0)/a',b)}{AB(\hQ(0)/b)C}
$$
where we have used that $\hQ$ is well-defined regardless of the seed it is calculated in, and we have the usual ambiguities.  In addition, the substitution $b = \hQ(0)/b'$ is also made in $\hR(0)$.  (Note that it is possible for $Z_3(a,b)$ to not depend on $b$, but the argument continues in essentially the same way.) Finally,
$$
Z_5(a,b) = \frac{Z_1(\frac{\hR(0)}{\hP(0)/a},b)}{AB(\hQ(0)/b)CD(b)}
$$
Now we already showed previously that $\hR=\hP$, so we have that $Z_5(a,b)$ and $Z_1(a,b)$ are equal up to a monomial factor in $a,b,x_j$, together with the many factors $A,B,C,D$ none of which depend on $a$, and which are only defined up to certain factors which also do not depend on $a$.  Since $Z_1(a,b)$ depends on both $a$ and $b$, we must have $Z_5(a,b) = Z_1(a,b)$.  

In the case that $Z_1(a,b)$ does not depend on $b$ but does depend on $a$ the argument is similar.  When $Z_1(a,b)$ does not depend on $a$ at all, the argument is straightforward.

This completes the proof of Theorem \ref{thm:comm}.

\end{document}